\theoremstyle{plain}
\newtheorem{theorem}{Theorem}[section]
\crefname{theorem}{Theorem}{Theorems}
\newtheorem{proposition}[theorem]{Proposition}
\crefname{proposition}{Proposition}{Propositions}
\newtheorem{lemma}[theorem]{Lemma}
\crefname{lemma}{Lemma}{Lemmas}
\newtheorem{corollary}[theorem]{Corollary}
\crefname{corollary}{Corollary}{Corollaries}
\theoremstyle{definition}
\newtheorem{definition}[theorem]{Definition}
\crefname{definition}{Definition}{Definitions}
\newtheorem{example}[theorem]{Example}
\crefname{example}{Example}{Examples}
\newtheorem{def-prop}[theorem]{Definition-Proposition}
\crefname{def-prop}{Definition-Proposition}{}
\theoremstyle{remark}
\newtheorem{remark}[theorem]{Remark}
\crefname{remark}{Remark}{Remarks}
\crefname{claim}{Claim}{Claims}
\newtheorem{notation}[theorem]{Notation}
\crefname{notation}{Notation}{}
\crefname{question}{Question}{Questions}
\crefname{chapter}{Chapter}{Chapters}
\crefname{section}{Section}{Sections}
\crefname{subsection}{Subsection}{Subsections}
\crefname{figure}{Figure}{Figures}
\newcommand*\colvec[3][]{
	\begin{pmatrix}\ifx\relax#1\relax\else#1\\\fi#2\\#3\end{pmatrix}
}
\DeclareMathOperator{\Dom}{Dom}
\renewcommand{\Re}{\text{Re}}
\renewcommand{\Im}{\text{Im}}
\DeclareSymbolFontAlphabet{\mathbbm}{bbold}%
\DeclareSymbolFontAlphabet{\mathbb}{AMSb}%
\def\black{\color{black}}
\title[Peter--Weyl truncations of compact quantum groups]{Convergence of Peter--Weyl Truncations of Compact Quantum Groups}
\author{Malte Leimbach}
\address{Institute for Mathematics, Astrophysics and Particle Physics, Radboud
		University Nijmegen, Heyendaalseweg 135, 65254 AJ Nijmegen, The Netherlands.
	}
\email{m.leimbach@math.ru.nl}
\date{April 10, 2025}
\begin{document}
	
	\begin{abstract}
		We consider a coamenable compact quantum group $\mathbb{G}$ as a compact quantum metric space if its function algebra $\mathrm{C}(\mathbb{G})$ is equipped with a Lip-norm.
		By using a projection $P$ onto direct summands of the Peter--Weyl decomposition, the $\mathrm{C}^*$-algebra $\mathrm{C}(\mathbb{G})$ can be compressed to an operator system $P\mathrm{C}(\mathbb{G})P$, and there are induced left and right coactions on this operator system.
		Assuming that the Lip-norm on $\mathrm{C}(\mathbb{G})$ is bi-invariant in the sense of Li, there is an induced bi-invariant Lip-norm on the operator system $P\mathrm{C}(\mathbb{G})P$ turning it into a compact quantum metric space.
		Given an appropriate net of such projections which converges strongly to the identity map on the Hilbert space $\mathrm{L}^2(\mathbb{G})$, we obtain a net of compact quantum metric spaces.
		We prove convergence of such nets in terms of Kerr's complete Gromov--Hausdorff distance.
		An important tool is the choice of an appropriate state whose induced slice map gives an approximate inverse of the compression map $\mathrm{C}(\mathbb{G}) \ni a \mapsto PaP$ in Lip-norm.
	\end{abstract}
	
	\maketitle
	
	\tableofcontents

	\section{Introduction}
	
	A spectral approach to geometry allows for generalizations to the noncommutative realm \cite{Con94} in terms of spectral triples, which indeed recover a Riemannian spin-manifold in the commutative case \cite{Con96, Con13}.
	One of the inputs for reconstructing a Riemannian spin manifold $M$ from its associated spectral triple $(\mathrm{C}^\infty(M),\mathrm{L}^2(S_M),D_M)$ is the full spectrum of the spin-Dirac operator $D_M$.
	As argued in \cite{CvS21}, it is, however, physically more reasonable to expect only availability of part of the spectral data.
	The authors point out that in that case one is led to consider operator systems rather than $\mathrm{C}^*$-algebras and put forward the notion of \emph{spectral truncations}.
	More precisely, given a metric spectral triple $(A,H,D)$ and a family of spectral projections $P_\Lambda : H \rightarrow H$ associated to the operator $D$ and which converges strongly to the identity map $\mathbf{I}^H : H \rightarrow H$, one may consider the family of \emph{operator system spectral triples} $(P_\Lambda A P_\Lambda,P_\Lambda H,P_\Lambda D P_\Lambda)$ and ask about convergence to the spectral triple $(A,H,D)$.
	The by now established setting in which this issue can be reasonably addressed is that of Rieffel's compact quantum metric spaces \cite{Rie98, Rie99} and quantum versions of Gromov--Hausdorff distance \cite{Rie04, Ker03, KL07}.
	In this sense, convergence of spectral truncations has been proven for the circle \cite{vS21} and for tori \cite{LvS24}.
	See also the related work \cite{Toy23}.
	
	In this article we discuss convergence of truncations of coamenable compact quantum groups.
	As spectral data we consider, however, the irreducible finite dimensional corepresentations, rather than the spectrum of a Dirac operator.
	We therefore call the truncations under consideration \emph{Peter--Weyl truncations}.
	Our corepresentation theoretic setting seems to be easier to approach than one involving Dirac operators and it allows us to generalize techniques used for Peter--Weyl truncations of compact metric groups \cite{GEvS23}.
	
	Peter--Weyl truncations are complementary to Fourier truncations \cite{Rie23}.
	We point out that the operator systems arising in these two approaches are quite different.
	In fact, the Toeplitz system which arises as the Peter--Weyl truncation of the circle has propagation number $2$, whereas the Fejér--Riesz system obtained as the Fourier truncation of the circle has propagation number $\infty$, see \cite{CvS21} for the definition of the propagation number and proofs of these facts.
	In particular, these operator systems are not Morita equivalent in the sense of \cite{EKT22}.
	Also the matter of duality is still under investigation \cite[Subsection 4.2]{LvS24}.
	
	We give a brief sketch of our line of argument for convergence of Peter--Weyl truncations of compact quantum groups.
	Given a coamenable compact quantum group $\mathbb{G}$, we assume that its function algebra $A := \mathrm{C}(\mathbb{G})$ is equipped with a Lip-norm $L_A$ to give it the structure of a compact quantum metric space. 
	It is crucial that the Lip-norm is invariant for the left and right coactions by comultiplication of $A$ on itself.
	The notion of (right) invariance was put forward in \cite{Li09} and means that $L_A((\mu \otimes \mathbf{I}^A)\Delta(a)) \leq L_A(a)$, for all elements $a \in A$ and states $\mu \in \mathcal{S}(A)$.
	Let $P_\Lambda : \mathrm{L}^2(\mathbb{G}) \rightarrow \mathrm{L}^2(\mathbb{G})$ be a projection associated to the Peter--Weyl decomposition of the compact quantum group $\mathbb{G}$.
	Upon realizing that the comultiplication induces ergodic left and right coactions on the compression $A^{(\Lambda)} := P_\Lambda A P_\Lambda$ (\autoref{lem:Induced-Action-on-Toeplitz-System}), we apply one of the main results from \cite{Li09}, recalled in \autoref{prop:Li-Thm-1.4}, to obtain an induced bi-invariant Lip-norm on the operator system $A^{(\Lambda)}$.
	
	We emphasize that we consider convergence in Kerr's \emph{complete Gromov--Hausdorff distance} \cite{Ker03, KL07} and we check that the proof of a criterion for control of this distance \cite{KK22} extends to the complete setting (\autoref{prop:Criterion-qGH-comparison}).
	This method goes back to Rieffel's early papers, where it was formalized in terms of \emph{bridges} \cite{Rie04}, and to the idea from \cite{vS21} of finding appropriate morphisms $\tau : A \rightarrow A^{(\Lambda)}$, $\sigma : A^{(\Lambda)} \rightarrow A$ of compact quantum metric spaces.
	Complete Gromov--Hausdorff distance (rather than quantum Gromov--Hausdorff distance) seems to be suitable in the setting of operator systems (rather than order unit spaces) as it takes the matrix order structure into account.
	
	In view of the criterion just mentioned, we propose for the map $\tau : A \rightarrow A^{(\Lambda)}$ the compression map $a \mapsto P_\Lambda a P_\Lambda$ and, similarly as in \cite{GEvS23}, for the map $\sigma : A^{(\Lambda)} \rightarrow A$ the slice map $x \mapsto (\phi \otimes \mathbf{I}^A) \alpha^\tau(x)$, for an appropriate choice of a state $\phi \in \mathcal{S}(A^{(\Lambda)})$ and for $\alpha^\tau : A^{(\Lambda)} \rightarrow A^{(\Lambda)} \otimes A$ the above-mentioned coaction induced by the comultiplication.
	Invariance of the Lip-norms yields straightforwardly that these maps are morphisms of compact quantum metric spaces, see \autoref{lem:Compression-map-morphism}, \autoref{lem:Symbol-map-morphism}.
	Their compositions can be compared to the respective identity maps on $A$ and $A^{(\Lambda)}$ in terms of Lip-norms by a general argument about slice maps (\autoref{lem:Lip-estimate-for-action-OS}).
	A density result for states, \autoref{lem:Weak*-Density-Of-Liftable-States}, now is enough to satisfy the criterion in \autoref{prop:Criterion-qGH-comparison} and thus to prove our main theorem, \autoref{thm:Convergence-PW-truncations}, that bi-invariant Peter--Weyl truncations of coamenable compact quantum groups converge in complete Gromov--Hausdorff distance.
	
	The article is organized as follows.
	\autoref{sec:Operator-systems} and \autoref{sec:Compact-quantum-groups} are preliminary with the main purpose of fixing terminology and notation on operator systems and compact quantum groups. 
	We decided to include some details about coactions on operator systems in \autoref{sec:Coactions}, compact quantum metric spaces and complete Gromov--Hausdorff distance in \autoref{sec:CQMS}, as well as invariant Lip-norms in \autoref{sec:Invariant-Lip-norms}, as some of them have not yet been combined in the literature in the way necessary for our purposes.
	Our main arguments are in \autoref{sec:PW-truncations} and the experienced reader will be able to follow them by only referring back to the earlier sections as needed.
	We also explain in some detail how our result generalizes that of \cite{GEvS23}.

	\section*{Acknowledgements}
	
	I would like to thank Yvann Gaudillot-Estrada, Jens Kaad, Marc Rieffel and Walter van Suijlekom for many helpful discussions, the Mathematics department of UC Berkeley for their hospitality during my visit in spring 2024, and an anonymous referee for careful suggestions.
	This work was funded by NWO under grant OCENW.KLEIN.376.
	This article is based upon work from COST Action CaLISTA CA21109 supported by COST (European Cooperation in Science and Technology).

	\section{Operator systems}\label{sec:Operator-systems}
	
	We collect a few properties of operator systems will be used throughout without further reference.
	For more details, see e.g.\@ \cite{Pau02}.
	
	We only consider \emph{unital} operator systems and we usually work with their \emph{concrete} version, i.e.\@ for us an operator system is a unital $^*$-closed subspace of $\mathcal{B}(H)$, for some Hilbert space $H$.
	If $\Phi : X \rightarrow Y$ is a map between operator systems, we say that $\Phi$ is \emph{(u)cp}, \emph{cb}, \emph{cc}, \emph{(u)ci} if $\Phi$ is respectively (unital) completely positive, completely bounded, completely contractive, (unital) completely isometric.
	
	We occasionally refer to \emph{order-unit} spaces by which we mean a real partially ordered vector space $V$ with an Archimedean order unit, which furthermore induces a norm on $V$.
	
	\begin{notation}
		If $X$ is an operator system, we denote by $\mathbf{1}_X$ the unit in $X$.
		If $V$ is any vector space, we denote by $\mathbf{I}^V$ the identity map $V \rightarrow V$.
	\end{notation}
	
	Let $\Phi : X \rightarrow Y$ be a map between operator systems $X$ and $Y$.
	If $\Phi$ is positive, it is self-adjoint, i.e.\@ $\Phi(x^*) = \Phi(x)^*$, for all elements $x \in X$ \cite[Exercise 2.1]{Pau02}.
	If $\Phi$ is ucp, it is cc.
	If $\Phi$ is cb with $\lVert \Phi\rVert_\mathrm{cb} = \lVert\Phi(\mathbf{1}_X)\rVert$, the map $\Phi$ is cp.
	In particular, if the map $\Phi$ is uci, the maps $\Phi$ and $\Phi^{-1} : \Phi(X) \rightarrow Y$ are cp \cite[Proposition 3.5]{Pau02}.
	
	\begin{notation}\label{not:Spatial-tensor-product}
		For Hilbert spaces $H, K$ and subspaces $X \subseteq \mathcal{B}(H)$, $Y \subseteq \mathcal{B}(K)$, we denote by $X \otimes Y$ the \emph{spatial} tensor product, i.e.\@ the completion of the algebraic tensor product $X \odot Y$ in $\mathcal{B}(H \otimes K)$, where $H \otimes K$ is the Hilbert space tensor product.
	\end{notation}
	
	We will refer to the following result as the \emph{Fubini theorem} for cb/cp maps.
	
	\begin{lemma}\label{lem:Fubini-for-cb-maps}
		Let $X_1 \subseteq \mathcal{B}(H_1)$, $X_2 \subseteq \mathcal{B}(H_2)$ be operator spaces (respectively operator systems) and let $\Phi_1 : X_1 \rightarrow \mathcal{B}(K_1)$, $\Phi_2 : X_2 \rightarrow \mathcal{B}(K_2)$ be cb (respectively cp) maps.
		Then the map $\Phi_1 \odot \Phi_2 : X_1 \odot X_2 \rightarrow \mathcal{B}(K_1 \otimes K_2)$ extends uniquely to a cb (respectively cp) map $\Phi_1 \otimes \Phi_2 : X_1 \otimes X_2 \rightarrow \mathcal{B}(K_1 \otimes K_2)$ on the spatial tensor product such that $\lVert\Phi_1 \otimes \Phi_2\rVert_\mathrm{cb} \leq \lVert\Phi_1\rVert_\mathrm{cb} \lVert\Phi_2\rVert_\mathrm{cb}$.
		
		In particular, the following commutativity property holds:
		\begin{align}
			(\Phi_1 \otimes \mathbf{I}^{\mathcal{B}(H_2)}) (\mathbf{I}^{X_1} \otimes \Phi_2)
			= (\mathbf{I}^{\mathcal{B}(H_1)} \otimes \Phi_2) (\Phi_1 \otimes \mathbf{I}^{X_2})
			= \Phi_1 \otimes \Phi_2
		\end{align}
	\end{lemma}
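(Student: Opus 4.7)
The strategy is to factor the two-variable extension into two one-sided ``half-amplifications'' and extend each separately. On the algebraic tensor product one has
\begin{align}
\Phi_1 \odot \Phi_2 = (\mathbf{I}^{\mathcal{B}(K_1)} \odot \Phi_2) \circ (\Phi_1 \odot \mathbf{I}^{X_2}) = (\Phi_1 \odot \mathbf{I}^{\mathcal{B}(K_2)}) \circ (\mathbf{I}^{X_1} \odot \Phi_2),
\end{align}
so it suffices to show that each of the half-amplifications $\Phi_1 \otimes \mathbf{I}^{X_2}$ and $\mathbf{I}^{\mathcal{B}(K_1)} \otimes \Phi_2$ admits a cb (resp. cp) extension to the spatial tensor product with the expected norm bound.

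For the half-amplification I would first invoke the Wittstock extension theorem (cb case) or Arveson's extension theorem (cp case) to extend $\Phi_1$ to a cb (resp. cp) map $\tilde{\Phi}_1 : \mathcal{B}(H_1) \rightarrow \mathcal{B}(K_1)$ with $\lVert\tilde{\Phi}_1\rVert_\mathrm{cb} = \lVert\Phi_1\rVert_\mathrm{cb}$; the latter exists because $\mathcal{B}(K_1)$ is an injective object in the relevant category. One then checks that $\tilde{\Phi}_1 \otimes \mathbf{I}^{\mathcal{B}(H_2)}$ extends from the algebraic tensor product to a cb map on the minimal tensor product $\mathcal{B}(H_1) \otimes_{\mathrm{min}} \mathcal{B}(H_2)$ with the same cb norm: for any finite-rank block $M_n \hookrightarrow \mathcal{B}(H_2)$, the estimate $\lVert(\tilde{\Phi}_1 \otimes \mathbf{I}^{M_n})(u)\rVert \leq \lVert\tilde{\Phi}_1\rVert_\mathrm{cb} \lVert u\rVert$ holds by the very definition of the cb norm, and every element of the minimal tensor product can be approximated by elements supported in some $\mathcal{B}(H_1) \odot M_n$. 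In the cp case, $\mathbf{I}^{\mathcal{B}(H_2)}$ is in particular a $*$-homomorphism, so tensoring with it on the minimal product preserves complete positivity. Restricting this extension to the subspace $X_1 \otimes X_2 \subseteq \mathcal{B}(H_1) \otimes_{\mathrm{min}} \mathcal{B}(H_2) \subseteq \mathcal{B}(H_1 \otimes H_2)$ gives the desired amplification; the analogous construction applied to $\Phi_2$ yields the other one.

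Composing the two extensions produces $\Phi_1 \otimes \Phi_2 : X_1 \otimes X_2 \rightarrow \mathcal{B}(K_1 \otimes K_2)$ with cb norm at most $\lVert\Phi_1\rVert_\mathrm{cb} \lVert\Phi_2\rVert_\mathrm{cb}$; uniqueness of the extension is immediate from the density of $X_1 \odot X_2$ in $X_1 \otimes X_2$ and continuity. The commutativity identity then also follows by uniqueness: both $(\Phi_1 \otimes \mathbf{I}^{\mathcal{B}(H_2)}) \circ (\mathbf{I}^{X_1} \otimes \Phi_2)$ and $(\mathbf{I}^{\mathcal{B}(H_1)} \otimes \Phi_2) \circ (\Phi_1 \otimes \mathbf{I}^{X_2})$ are cb (resp. cp) extensions of $\Phi_1 \odot \Phi_2$, they visibly agree on elementary tensors $a_1 \otimes a_2$, and so must coincide with $\Phi_1 \otimes \Phi_2$ on all of $X_1 \otimes X_2$.

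The main technical point is really the half-amplification estimate for infinite-dimensional $H_2$, where one must be careful that working in the spatial (equivalently minimal) tensor product is what makes the matrix-block approximation legitimate; once this is granted, the remainder is formal bookkeeping.
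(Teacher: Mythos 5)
Your overall architecture (factor through half-amplifications, deduce the commutativity identity from uniqueness of the continuous extension plus agreement on elementary tensors) is sound, and the last step is exactly how the paper argues; the paper, however, simply cites \cite[Theorem 12.3]{Pau02} for the existence of the cb/cp extension, whereas you attempt to prove that extension result yourself, and there the key step fails. Your justification of the half-amplification bound rests on the claim that every element of $\mathcal{B}(H_1) \otimes_{\mathrm{min}} \mathcal{B}(H_2)$ can be norm-approximated by elements supported in some $\mathcal{B}(H_1) \odot M_n$ with $M_n \hookrightarrow \mathcal{B}(H_2)$ a finite-rank block. This is false when $H_2$ is infinite dimensional: such elements lie in $\mathcal{B}(H_1) \otimes \mathcal{K}(H_2)$, which is a proper closed subspace, and for instance $\lVert \mathbf{1} \otimes \mathbf{1} - T \rVert \geq \lVert (\mathbf{1}\otimes\mathbf{1} - T)(\mathbf{1} \otimes (\mathbf{1}-p)) \rVert = 1$ whenever $T$ is supported on a finite-rank projection $p$. (Even allowing unital copies of $M_n$ does not help: the unilateral shift is at distance $1$ from the set of operators with finite spectrum, hence from every finite-dimensional $\mathrm{C}^*$-subalgebra.) So the approximation you lean on for ``the main technical point'' does not exist, and continuity alone cannot carry the estimate from the finite blocks to the whole spatial tensor product.

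The standard repair does not approximate the element but compresses it: for $u \in X_1 \odot \mathcal{B}(H_2)$ and a finite-rank projection $p$ on $H_2$ one has $(\mathbf{1}_{K_1} \otimes p)\bigl((\tilde{\Phi}_1 \odot \mathbf{I})(u)\bigr)(\mathbf{1}_{K_1} \otimes p) = (\tilde{\Phi}_1 \odot \mathbf{I}_{p\mathcal{B}(H_2)p})\bigl((\mathbf{1}\otimes p)u(\mathbf{1}\otimes p)\bigr)$, whose norm is at most $\lVert \tilde{\Phi}_1 \rVert_{\mathrm{cb}} \lVert u \rVert$ since $(\mathbf{1}\otimes p)u(\mathbf{1}\otimes p)$ lies in $M_n(X_1)$ up to the identification $p\mathcal{B}(H_2)p \cong M_n$; because $\mathbf{1} \otimes p \rightarrow \mathbf{1}$ strongly, the supremum of these compressions computes $\lVert (\tilde{\Phi}_1 \odot \mathbf{I})(u) \rVert$, giving the bound on the algebraic tensor product, after which extension by continuity (and the same argument for matrix amplifications, or Stinespring for the cp case) is legitimate. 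Alternatively, you could simply cite the functoriality of the minimal tensor product as the paper does; as written, though, your proof of the extension step has a genuine gap, while your derivation of the commutativity property from it is fine.
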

	
	\begin{proof}
		For the first part, see \cite[Theorem 12.3]{Pau02}.
		The commutativity property then follows immediately from the commutativity property on the algebraic tensor products,
		\begin{align}
			(\Phi_1 \odot \mathbf{I}^{\mathcal{B}(H_2)}) (\mathbf{I}^{X_1} \odot \Phi_2)
			= (\mathbf{I}^{\mathcal{B}(H_1)} \odot \Phi_2) (\Phi_1 \odot \mathbf{I}^{X_2})
			= \Phi_1 \odot \Phi_2,
		\end{align}
		and from the existence and uniqueness of the extension to the spatial tensor product.
	\end{proof}
	
	In the special case that the maps $\Phi_1, \Phi_2$ in the above lemma are linear functionals, we refer to it as the Fubini theorem for slice maps which has been known since \cite{Tom67}.
	
	We point out that (unital) complete positivity is a property concerning arbitrary (not just matrix) amplifications of a map between operator systems:
	
	\begin{lemma}
		Let $X, Y$ be operator systems and let $\Phi : X \rightarrow Y$ be a unital linear map.
		Then the map $\Phi$ is cp if and only if the amplification $\Phi \otimes \mathbf{I}^Z : X \otimes Z \rightarrow Y \otimes Z$ is cp, for any operator system $Z$.
	\end{lemma}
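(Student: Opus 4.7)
The plan is to prove the two directions separately, with the nontrivial content of the ``only if'' direction being a direct application of the Fubini theorem for cp maps already established in \autoref{lem:Fubini-for-cb-maps}.

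For the ``if'' direction, assume that $\Phi \otimes \mathbf{I}^Z$ is cp for every operator system $Z$. Specializing to $Z = M_n(\mathbb{C})$ for each $n \in \mathbb{N}$, and using the canonical identification $X \otimes M_n \cong M_n(X)$ (and likewise for $Y$), one sees that the cp map $\Phi \otimes \mathbf{I}^{M_n}$ is in particular positive, which is exactly the statement that $\Phi$ is $n$-positive. Letting $n$ vary over all positive integers yields complete positivity of $\Phi$.

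For the ``only if'' direction, assume that $\Phi : X \to Y$ is cp, and let $Z \subseteq \mathcal{B}(H_Z)$ be an arbitrary operator system. Pick a Hilbert space $H_Y$ with $Y \subseteq \mathcal{B}(H_Y)$. The identity map $\mathbf{I}^Z : Z \to Z \subseteq \mathcal{B}(H_Z)$ is trivially cp, so \autoref{lem:Fubini-for-cb-maps} applied to $\Phi_1 = \Phi$ and $\Phi_2 = \mathbf{I}^Z$ produces a unique cp extension $\Phi \otimes \mathbf{I}^Z : X \otimes Z \to \mathcal{B}(H_Y \otimes H_Z)$ of the algebraic tensor product map. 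On elementary tensors the extension takes the value $\Phi(x) \otimes z \in Y \otimes Z$, and since $Y \otimes Z$ is a closed subspace of $\mathcal{B}(H_Y \otimes H_Z)$, continuity of the extension forces its range to lie in $Y \otimes Z$. Hence $\Phi \otimes \mathbf{I}^Z : X \otimes Z \to Y \otimes Z$ is well-defined and cp.

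There is no substantive obstacle: the only points that require attention are the trivial observation that the identity map of an operator system is cp (which lets Fubini be applied), and the continuity-plus-density argument needed to restrict the codomain from $\mathcal{B}(H_Y \otimes H_Z)$ down to $Y \otimes Z$. The unitality hypothesis on $\Phi$ plays no role in either direction and is carried along only because it is part of the standing convention for maps between (unital) operator systems.
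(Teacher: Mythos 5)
Your proof is correct, but it takes a different route from the paper. The paper argues through complete contractivity: since $\Phi$ is unital, it is cp if and only if it is cc (citing \cite[Corollary 5.1.2]{ER20}), then invokes \cite[Proposition 2.1.1]{Pis03} to pass between complete contractivity of $\Phi$ and of $\Phi \otimes \mathbf{I}^Z$ on the spatial tensor product, and finally applies the cp--cc equivalence once more to the unital amplification. You instead prove the ``only if'' direction by applying the already-stated Fubini theorem (\autoref{lem:Fubini-for-cb-maps}) to $\Phi_1 = \Phi$ and $\Phi_2 = \mathbf{I}^Z$, restricting the codomain to $Y \otimes Z$ by the continuity-and-density argument, and the ``if'' direction by specializing $Z = M_n(\mathbb{C})$ and reading off $n$-positivity from the identification $X \otimes M_n(\mathbb{C}) \cong M_n(X)$; both steps are sound, including the corestriction step, since the matrix order on the spatial tensor product $Y \otimes Z$ is the one inherited from $\mathcal{B}(H_Y \otimes H_Z)$. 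What the comparison buys: your argument is more self-contained relative to the paper (it leans only on \autoref{lem:Fubini-for-cb-maps} rather than on two external operator-space results), and, as you correctly observe, it never uses unitality of $\Phi$, so it actually proves a slightly more general statement; the paper's argument is shorter to state given the cited results from \cite{ER20} and \cite{Pis03}, but there unitality is genuinely needed, since the equivalence of cp and cc fails for non-unital maps without additional norm hypotheses.
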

	
	\begin{proof}
		By \cite[Corollary 5.1.2]{ER20} the unital map $\Phi$ is cp if and only if $\Phi$ is cc and by \cite[Proposition 2.1.1]{Pis03} this is equivalent to $\Phi \otimes \mathbf{I}^Z$ being cc, for any operator space $Z$.
		Applying \cite[Corollary 5.1.2]{ER20} again gives the equivalence with $\Phi \otimes \mathbf{I}^Z$ being cp.
	\end{proof}	
	
	\begin{definition}
		Let $Y \subseteq X$ be operator systems.
		A \emph{ucp conditional expectation} is a ucp map $E : X \rightarrow Y$ such that $E(y) = y$, for all $y \in Y$.
	\end{definition}
	
	In other words a ucp conditional expectation is an idempotent ucp map $E : X \rightarrow X$ with range $Y$.
	
	The \emph{state space} of an operator system $X$ is the set of positive linear functionals on $X$ of norm $1$, which we denote by $\mathcal{S}(X)$.
	
	Let $X \subseteq \mathcal{B}(H)$ be an operator system and, for a directed set $\mathcal{L}$, let $(P_\Lambda)_{\Lambda \in \mathcal{L}}$ be a net of orthogonal projections in $\mathcal{B}(H)$.
	For every $\Lambda \in \mathcal{L}$, set $H_\Lambda := P_\Lambda H$.
	Assume that the net $(P_\Lambda)_{\Lambda \in \mathcal{L}}$ is a \emph{join semilattice} for the relation of containment of ranges, i.e.\@ for all $\Lambda_1, \Lambda_2 \in \mathcal{L}$, the orthogonal projection $P_{\Lambda_1 \vee \Lambda_2}$ onto the closed subspace $H_{\Lambda_1} + H_{\Lambda_2}$ is in the net. 
	Assume furthermore that the net $(P_\Lambda)_{\Lambda \in \mathcal{L}}$  converges strongly to the identity $\mathbf{I}^H \in \mathbf{B}(H)$.
	Let $\tau_\Lambda : \mathcal{B}(H) \rightarrow \mathcal{B}(H_\Lambda)$ be the compression map, i.e.\@ $\tau_\Lambda(T) := P_\Lambda T P_\Lambda$, for all bounded operators $T \in \mathcal{B}(H)$, and denote by $X_\Lambda := \tau_\Lambda(X)$ the operator subsystem of $\mathcal{B}(H_\Lambda)$ given by the image of the operator system $X$ under $\tau_\Lambda$.
	We set $\mathcal{S}_\mathcal{L} := \bigcup_{\Lambda \in \mathcal{L}} \tau_\Lambda^* \mathcal{S}(X_\Lambda) \subseteq \mathcal{S}(X)$, where $\tau_\Lambda^* : \mathcal{S}(X_\Lambda) \rightarrow \mathcal{S}(X)$ is the pullback of the map $\tau_\Lambda$.
	
	\begin{lemma}[{\cite[Proposition 15]{GEvS23}}]\label{lem:Weak*-Density-Of-Liftable-States}
		The set $\mathcal{S}_\mathcal{L}$ is dense in the state space $\mathcal{S}(X)$ for the weak$^*$ topology.
	\end{lemma}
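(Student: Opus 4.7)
The plan is to reduce the problem to approximating vector states, exploiting strong convergence $P_\Lambda \to \mathbf{I}^H$. First I would fix a state $\mu \in \mathcal{S}(X)$, a weak* neighborhood basis element specified by finitely many $x_1, \ldots, x_n \in X$ and $\epsilon > 0$, and the goal is to exhibit $\Lambda \in \mathcal{L}$ and $\mu_\Lambda \in \mathcal{S}(X_\Lambda)$ with $|\mu(x_j) - (\tau_\Lambda^* \mu_\Lambda)(x_j)| < \epsilon$ for all $j$. By Arveson's extension theorem, $\mu$ extends to a state $\tilde\mu \in \mathcal{S}(\mathcal{B}(H))$, and by the well-known weak* density of normal states in $\mathcal{S}(\mathcal{B}(H))$ (every normal state is a convex combination of vector states), there exist $\lambda_1, \ldots, \lambda_m \geq 0$ with $\sum_i \lambda_i = 1$ and unit vectors $\xi_1, \ldots, \xi_m \in H$ such that
\begin{align}
    \Bigl| \mu(x_j) - \sum_{i=1}^m \lambda_i \langle x_j \xi_i, \xi_i \rangle \Bigr| < \epsilon/2, \qquad j = 1, \ldots, n.
\end{align}

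Next I would use the net $(P_\Lambda)_{\Lambda \in \mathcal{L}}$. Since $P_\Lambda \to \mathbf{I}^H$ strongly, $P_\Lambda \xi_i \to \xi_i$ for each $i$, so $\|P_\Lambda \xi_i\| \to 1$; hence for $\Lambda$ eventually large enough I may define the normalized projected vectors $\hat\xi_i^\Lambda := P_\Lambda \xi_i / \|P_\Lambda \xi_i\| \in H_\Lambda$ and the state
\begin{align}
    \mu_\Lambda := \sum_{i=1}^m \lambda_i \,\omega_{\hat\xi_i^\Lambda} \in \mathcal{S}(X_\Lambda), \qquad \omega_{\hat\xi_i^\Lambda}(y) := \langle y \hat\xi_i^\Lambda, \hat\xi_i^\Lambda \rangle.
\end{align}
Because each $\hat\xi_i^\Lambda$ lies in $H_\Lambda$, one has $\langle P_\Lambda x P_\Lambda \hat\xi_i^\Lambda, \hat\xi_i^\Lambda\rangle = \langle x \hat\xi_i^\Lambda, \hat\xi_i^\Lambda\rangle$, so $(\tau_\Lambda^* \mu_\Lambda)(x) = \sum_i \lambda_i \langle x \hat\xi_i^\Lambda, \hat\xi_i^\Lambda\rangle$. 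Strong convergence $\hat\xi_i^\Lambda \to \xi_i$ then gives $(\tau_\Lambda^* \mu_\Lambda)(x_j) \to \sum_i \lambda_i \langle x_j \xi_i, \xi_i\rangle$ for each $j$, so for sufficiently large $\Lambda$ the difference is less than $\epsilon/2$ on each $x_j$. Combining with the first estimate via the triangle inequality yields the required weak* approximation.

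The essential inputs are therefore three standard facts strung together: Arveson extension of states from $X$ to $\mathcal{B}(H)$, weak* density of normal states in $\mathcal{S}(\mathcal{B}(H))$, and joint continuity of $(\xi, \eta) \mapsto \langle x\xi, \eta\rangle$ in the strong topology on bounded sets. I do not anticipate a serious obstacle; the only mild subtlety is ensuring $P_\Lambda \xi_i \neq 0$ so that normalization is legal, which follows since $\|P_\Lambda \xi_i\| \to 1$ holds eventually in the directed set, and the join-semilattice assumption is not actually needed for this particular lemma (it will be used elsewhere). The argument matches the approach of \cite[Proposition 15]{GEvS23}, adapted to the present operator-system setting where extension of positive functionals and evaluation of $\tau_\Lambda^* \mu_\Lambda$ on $X$ must be handled carefully, but no new ideas beyond those are required.
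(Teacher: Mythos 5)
Your proof is correct, but it takes a genuinely different route from the paper's. The paper never extends the state to $\mathcal{B}(H)$: it first shows that $\mathcal{S}_\mathcal{L}$ is \emph{convex} — and this is precisely where the join-semilattice hypothesis is used, since a convex combination of states pulled back from $X_{\Lambda_1}$ and $X_{\Lambda_2}$ is rewritten as a pullback from $X_{\Lambda_1 \vee \Lambda_2}$ — then observes that $\mathcal{S}_\mathcal{L}$ contains the vector states of unit vectors in the dense subspace $\sum_\Lambda H_\Lambda$ and hence detects positivity in $X$, and finally invokes \cite[Theorem 4.3.9]{KR1} to conclude that $\mathrm{co}(\mathcal{S}_\mathcal{L}) = \mathcal{S}_\mathcal{L}$ is weak$^*$-dense. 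Your argument instead works by extension to $\mathcal{B}(H)$, weak$^*$ approximation by finite convex combinations of vector states, and then replacing each $\xi_i$ by the normalized vector $P_\Lambda \xi_i / \lVert P_\Lambda \xi_i \rVert$ for a single sufficiently large $\Lambda$ (which exists by directedness, since only finitely many eventual conditions are involved). Because all the projected vectors lie in one $H_\Lambda$, the approximant lies in $\tau_\Lambda^* \mathcal{S}(X_\Lambda)$ for that single $\Lambda$, so your observation that the join-semilattice assumption is not needed for this lemma is accurate: the paper needs it only to make $\mathcal{S}_\mathcal{L}$ convex before applying the density criterion, a step your argument bypasses. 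What the paper's route buys in exchange is brevity — it never leaves $X$ and replaces your extension and normal-state-density steps by a single citation. Two small points to polish in your write-up: a normal state is in general a \emph{countable} convex combination of vector states (a norm limit of finite ones), not a finite one, though this is harmless since you only need weak$^*$ approximation on finitely many $x_j$; and one should note that $\tau_\Lambda(\mathbf{1}_X) = P_\Lambda$ acts as the identity on $H_\Lambda$, so that each $\omega_{\hat\xi_i^\Lambda}$ restricted to $X_\Lambda$ is indeed a state and $\mu_\Lambda \in \mathcal{S}(X_\Lambda)$.
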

	
	\begin{proof}
		Observe that the set $\mathcal{S}_\mathcal{L}$ is convex.
		Indeed, every subset $\tau_\Lambda^* \mathcal{S}(X_\Lambda) \subseteq \mathcal{S}_\mathcal{L}$ is convex, since the pullback map $\tau_\Lambda^* : \mathcal{S}(X_\Lambda) \rightarrow \mathcal{S}(X)$ is affine.
		Now, observe that for closed subspaces $H_{\Lambda_1} \subseteq H_{\Lambda_2}$ we have that $P_{\Lambda_1} P_{\Lambda_2} = P_{\Lambda_2} P_{\Lambda_1} = P_{\Lambda_1}$, so that we can consider the restriction $\tau_{\Lambda_1}\big|_{X_{\Lambda_2}} (T) := P_{\Lambda_1} a P_{\Lambda_1}$, for all elements $T \in X_{\Lambda_2}$ and $a \in X$ with $\tau_{\Lambda_2}(a) = T$.
		In particular, since $\tau_\Lambda$ is onto, for all closed subspaces $H_\Lambda$, $\Lambda \in \mathcal{L}$, we have that $\tau_\Lambda^* : \mathcal{S}(X_\Lambda) \rightarrow \mathcal{S}(X)$ and $(\tau_{\Lambda_1}\big|_{X_{\Lambda_2}})^* : \mathcal{S}(X_{\Lambda_1}) \rightarrow \mathcal{S}(X_{\Lambda_2})$ are injections.
		Therefore, if $\phi \in \tau_{\Lambda_1}^*\mathcal{S}(X_{\Lambda_1}), \psi \in \tau_{\Lambda_2}^*\mathcal{S}(X_{\Lambda_2})$ are states, any convex combination $t\phi + (1-t)\psi$ (for $0 \leq t \leq 1$) is a state $t (\tau_{\Lambda_1}\big|_{X_{\Lambda_1 \vee \Lambda_2}})^* \phi + (1-t) (\tau_{\Lambda_2}\big|_{X_{\Lambda_1 \vee \Lambda_2}})^* \psi$ in $\tau_{\Lambda_1 \vee \Lambda_2}^*\mathcal{S}(X_{\Lambda_1 \vee \Lambda_2})$, which establishes convexity of $\mathcal{S}_\mathcal{L}$.
		
		Now, since the subspace $\sum_{H_\Lambda \in \mathcal{L}} H_\Lambda$ is dense in $H$ by strong convergence $P_\Lambda \rightarrow \mathbf{I}^H$, the set $\mathcal{S}_\mathcal{L}$ contains a dense subset $\mathcal{S}_{\mathcal{L},\mathrm{vec}}$ of the vector states on $X$, so that an element $x \in X$ is positive if the complex number $\rho(x)$ is positive, for all vector states $\rho \in \mathcal{S}_{\mathcal{L},\mathrm{vec}}$, and thus for all states $\rho \in \mathcal{S}_\mathcal{L}$.
		Therefore, by \cite[Theorem 4.3.9]{KR1}, the set $\mathrm{co}(\mathcal{S}_\mathcal{L}) = \mathcal{S}_\mathcal{L}$ is weak$^*$-dense in $\mathcal{S}(X)$ as claimed.
	\end{proof}
	\black
	
	For convenience, we record the following well-known consequence of the \emph{Kadison function representation}.
	
	\begin{lemma}\label{lem:Kadison-function-representation}
		For every element $x \in X$ of an operator system $X$, the following holds:
		\begin{align}
			\sup_{\phi \in \mathcal{S}(X)} |\phi(x)| 
			\leq \lVert x\rVert
			\leq 2 \sup_{\phi \in \mathcal{S}(X)} |\phi(x)|
		\end{align}
	\end{lemma}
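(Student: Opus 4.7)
The first inequality is immediate from the fact that every state $\phi \in \mathcal{S}(X)$ has norm $1$, so $|\phi(x)| \leq \lVert\phi\rVert\lVert x\rVert = \lVert x\rVert$. For the second inequality, the plan is to reduce to the self-adjoint case by writing $x = h + ik$ where $h := \tfrac{1}{2}(x + x^*)$ and $k := \tfrac{1}{2i}(x - x^*)$ are self-adjoint elements of $X$ with $\lVert h\rVert, \lVert k\rVert \leq \lVert x\rVert$, and so $\lVert x\rVert \leq \lVert h\rVert + \lVert k\rVert$ by the triangle inequality.

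Next, I would invoke the \emph{Kadison function representation} for the self-adjoint part of $X$, whose conclusion (from which the lemma takes its name) is that for any self-adjoint element $y \in X$ one has the equality
\begin{align}
\lVert y\rVert = \sup_{\phi \in \mathcal{S}(X)} |\phi(y)|.
\end{align}
Applying this to $h$ and $k$ gives $\lVert h\rVert = \sup_{\phi} |\phi(h)|$ and $\lVert k\rVert = \sup_{\phi} |\phi(k)|$. Since positive linear functionals are self-adjoint (as recalled after the statement on \cite[Exercise 2.1]{Pau02} in the excerpt), for any $\phi \in \mathcal{S}(X)$ we have $\phi(h) = \Re \phi(x)$ and $\phi(k) = \Im \phi(x)$, hence both $|\phi(h)|$ and $|\phi(k)|$ are bounded by $|\phi(x)|$.

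Putting these observations together yields
\begin{align}
\lVert x\rVert \leq \lVert h\rVert + \lVert k\rVert = \sup_{\phi \in \mathcal{S}(X)} |\phi(h)| + \sup_{\phi \in \mathcal{S}(X)} |\phi(k)| \leq 2 \sup_{\phi \in \mathcal{S}(X)} |\phi(x)|,
\end{align}
which is the desired bound. There is no real obstacle here: once one recalls that states are self-adjoint and that the Kadison function representation computes the norm of a self-adjoint element as the sup over states, the rest is just the Cartesian decomposition and the triangle inequality, so the proof amounts to a brief indication of these steps.
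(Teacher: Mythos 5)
Your proof is correct and follows essentially the same route as the paper: the first inequality from states having norm $1$, and the second via the Cartesian decomposition $x = \Re(x) + i\Im(x)$, the identity $\lVert y\rVert = \sup_{\phi \in \mathcal{S}(X)}|\phi(y)|$ for self-adjoint $y$ (the Kadison function representation), and the triangle inequality. You merely spell out a couple of steps (that $\phi(\Re(x)) = \Re\phi(x)$, etc.) that the paper leaves implicit.
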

	
	\begin{proof}
		The first inequality is immediate since states are positive functionals of norm $1$.
		Indeed, we have $x = \Re(x) + i\Im(x)$, where $\Re(x) = \frac{x+x^*}{2}$ and $\Im(x) = \frac{i(x^*-x)}{2}$ are self-adjoint, so that $\lVert\Re(x)\rVert = \sup_{\phi \in \mathcal{S}(X)} |\phi(\Re(x))|$ and similarly for $\Im(x)$ \cite[Theorem 4.3.9]{KR1}.
		The claim then follows by triangle inequality.
	\end{proof}

	\section{Compact quantum groups}\label{sec:Compact-quantum-groups}
	
	We consider compact quantum groups in the sense of Woronowicz \cite{Wor98} and summarize their main properties, which are most important for this article, following the exposition in \cite{NT13}.
	See also \cite{KS97} for another standard reference which, however, takes a more (Hopf $^*$-)algebraic approach.
	
	As in \autoref{not:Spatial-tensor-product}, for two C$^*$-algebras $A_1$, $A_2$, we denote by $A_1 \otimes A_2$ their minimal tensor product.
	
	\begin{definition}
		A \emph{compact quantum group} is a pair $(A,\Delta)$, where $A$ is a unital $\mathrm{C}^*$-algebra and $\Delta : A \rightarrow A \otimes A$ is the \emph{comultiplication map}, i.e.\@ a unital $^*$-homomorphism which is \emph{coassociative}, i.e.
		\begin{align}
			(\mathbf{I}^A \otimes \Delta)\Delta = (\Delta \otimes \mathbf{I}^A)\Delta,
		\end{align} 
		and such that the \emph{Podleś density} (or \emph{cancellation}) \emph{property} is satisfied:
		\begin{align}
			\overline{\mathrm{span}}((A \otimes \mathbf{1}_A) \Delta(A)) = A \otimes A = \overline{\mathrm{span}}((\mathbf{1}_A \otimes A) \Delta(A))
		\end{align}
	\end{definition}
	
	We think of the $\mathrm{C}^*$-algebra $A$ as the ``\emph{function algebra}'' $\mathrm{C}(\mathbb{G})$ of a (virtual) compact quantum group $\mathbb{G}$ and will (ab)use this terminology and notation throughout.
	
	\begin{notation}
		We use Sweedler notation for the comultiplication, i.e.\@ we set $a_{(0)} \otimes a_{(1)} := \Delta(a)$, for all $a \in A$.
		Coassociativity allows for an unambiguous use of the notations $(\mathbf{I}^A \otimes \Delta)\Delta(a) = a_{(0)} \otimes a_{(1)} \otimes a_{(2)} = a_{(-1)} \otimes a_{(0)} \otimes a_{(1)} := (\Delta \otimes \mathbf{I}^A)\Delta(a)$.
		If $\theta, \vartheta$ are any cb maps with domain $A$ we set $a_{(0)} \otimes \theta(a_{(1)}) := (\mathbf{I}^A \otimes \theta) \Delta(a)$, $\vartheta(a_{(0)})\otimes a_{(1)} := (\vartheta \otimes \mathbf{I}^A) \Delta(a)$ and, by \autoref{lem:Fubini-for-cb-maps}, we may unambiguously write $\vartheta(a_{(0)}) \otimes \theta(a_{(1)}) := (\vartheta \otimes \theta) \Delta(a)$.
		If $\theta$ or $\vartheta$ are functionals, we may of course omit the tensor product ``$\otimes$'' in this notation.
	\end{notation}
	
	Fix a compact quantum group $\mathbb{G}$ with function algebra $A = \mathrm{C}(\mathbb{G})$ and comulti\-pli\-cation $\Delta : A \rightarrow A \otimes A$.
	
	\begin{definition}
		A \emph{unitary (right) corepresentation} $\pi$ of the compact quantum group $\mathbb{G}$ is given by a Hilbert space $H_\pi$ and a unitary element $U^\pi \in \mathcal{M}(\mathcal{K}(H_\pi) \otimes A)$, such that 
		\begin{align}\label{eqn:Corepresentation-property}
			(\mathbf{I} \otimes \Delta) (U^\pi) = U^\pi_{12} U^\pi_{13}.
		\end{align}
		If the Hilbert space $H_\pi$ is finite dimensional, the corepresentation $\pi$ is called \emph{finite dimensional} and we set $\dim(\pi) := \dim(H_\pi)$.
	\end{definition}
	
	In (\ref{eqn:Corepresentation-property}) above $\mathbf{I} \otimes \Delta : \mathcal{M}(\mathcal{K}(H_\pi) \otimes A) \rightarrow \mathcal{M}(\mathcal{K}(H_\pi) \otimes A \otimes A)$ denotes the unique extension of the map $\mathbf{I}^{\mathcal{K}(H_\pi)} \otimes \Delta$ on $\mathcal{K}(H_\pi) \otimes A$.
	Recall that there are two canonical embeddings of $\mathcal{M}(\mathcal{K}(H_\pi) \otimes A)$ into $\mathcal{M}(\mathcal{K}(H_\pi) \otimes A \otimes A)$, which are given by the unique extensions of the maps $\mathcal{K}(H_\pi) \otimes A \rightarrow \mathcal{K}(H_\pi) \otimes A \otimes A$ defined by $T \otimes a \mapsto T \otimes a \otimes \mathbf{1}_A$ and $T \otimes a \mapsto T \otimes \mathbf{1}_A \otimes a$ respectively.
	The elements $U^\pi_{12}, U^\pi_{13} \in \mathcal{M}(\mathcal{K}(H_\pi) \otimes A \otimes A)$ denote the respective images of $U^\pi$ under these two canonical embeddings.
	See also \cite{MVD98} for more background on this definition.
	Note that in the finite dimensional case $\mathcal{M}(\mathcal{K}(H_\pi) \otimes A) = \mathcal{B}(H_\pi) \otimes A$.
	
	Every finite dimensional unitary corepresentation $\pi$ induces an \emph{isometric comodule map} $\delta^\pi : H_\pi \rightarrow H_\pi \otimes A$, given by $\delta^\pi(\xi) := U^\pi(\xi \otimes \mathbf{1}_A)$,
	where $H_\pi$ is identified with $\mathcal{B}(\mathbb{C},H_\pi)$.
	Being a comodule map means that $\delta^\pi$ satisfies the \emph{comodule property}
	\begin{align}\label{eqn:Comodule-property}
		(\mathbf{I}^H \otimes \Delta) \delta^\pi = (\delta^\pi \otimes \mathbf{I}^A) \delta^\pi,
	\end{align}
	and being \emph{isometric} means
	\begin{align}
		\delta^\pi(\xi)^*\delta^\pi(\eta) = \overline{\langle \xi, \eta \rangle_{H_\pi}} \mathbf{1}_A,
	\end{align}
	for all vectors $\xi, \eta \in H_\pi$, where the convention in this article is that Hilbert space inner products are antilinear in the second component.
	Conversely, every isometric comodule map $\delta : H \rightarrow H \otimes A$ on a finite dimensional Hilbert space $H$ gives rise to a finite dimensional unitary corepresentation \cite[Lemma 1.7]{DC16}.
	
	An \emph{intertwiner} of two finite dimensional unitary corepresentations $\pi, \rho$ is an operator $T : H_\pi \rightarrow H_\rho$ such that $(T \otimes \mathbf{1}_A) U^\pi = U^\rho (T \otimes \mathbf{1}_A)$.
	The set of all intertwiners of the corepresentations $\pi$ and $\rho$ is denoted by $\mathrm{Mor}(\pi,\rho)$.
	If the set $\mathrm{Mor}(\pi,\rho)$ contains a unitary element, the corepresentations $\pi$ and $\rho$ are called \emph{unitarily equivalent}.
	The set $\mathrm{End}(\pi) := \mathrm{Mor}(\pi,\pi)$ is a $\mathrm{C}^*$-algebra and the co\-re\-pre\-sen\-tation $\pi$ is called \emph{irreducible} if $\mathrm{End}(\pi) = \mathbb{C} \mathbf{I}^{H_\pi}$.
	\emph{Schur's lemma} states that two finite dimensional irreducible unitary corepresentations $\pi,\rho$ are either unitarily equivalent and $\mathrm{dim}(\mathrm{Mor}(\pi,\rho)) = 1$, or that $\dim(\mathrm{Mor}(\pi,\rho)) = 0$. 
	We denote the set of unitary equivalence classes of finite dimensional irreducible unitary corepresentations by $\widehat{\mathbb{G}}$.
	
	There is a unique left and right invariant state $h_A \in \mathcal{S}(A)$, i.e.\@ a state which satisfies
	\begin{align}
		a_{(0)}h_A(a_{(1)}) = h_A(a_{(0)})a_{(1)} = h_A(a) \mathbf{1}_A,
	\end{align}
	for all elements $a \in A$.
	The state $h_A \in \mathcal{S}(A)$ is called the \emph{Haar state} of the compact quantum group $\mathbb{G}$.
	
	Denote by $H = \mathrm{L}^2(\mathbb{G})$ the Hilbert space of the GNS-representation $\pi_{h_A} : A \rightarrow \mathcal{B}(H)$ of the $\mathrm{C}^*$-algebra $A$ induced by the Haar state $h_A$.
	Denote the GNS-map by $\Lambda : A \rightarrow H$.
	Assume moreover, that the $\mathrm{C}^*$-algebra $A$ is faithfully represented on a Hilbert space $H_0$ and denote the inclusion of $A$ into $\mathcal{B}(H_0)$ by $\iota$.
	There are two unitary operators $W \in \mathcal{M}(\mathcal{K}(H) \otimes A), V \in \mathcal{M}(A \otimes \mathcal{K}(H))$ which satisfy
	\begin{align}
		W(\Lambda(a) \otimes \xi) 
		&= (\pi_{h_A} \otimes \iota)(\Delta(a))(\Lambda(\mathbf{1}_A) \otimes \xi), \\
		V(\xi \otimes \Lambda(a)) &= (\iota \otimes \pi_{h_A})(\Delta(a))(\xi \otimes \Lambda(\mathbf{1}_A)),
	\end{align}
	for all elements $a \in A$ and $\xi \in H_0$.
	The unitaries $W$, $V$ define unitary (respectively right, left) corepresentations of the compact quantum group $\mathbb{G}$ and are usually referred to as the \emph{multiplicative unitaries}.
	In particular, they implement the comultiplication $\Delta$ as follows:
	\begin{align}\label{eqn:Multiplicative-Unitary}
		W(\pi_{h_A}(a) \otimes \mathbf{1}_{\mathcal{B}(H_0)})W^* &= (\pi_{h_A} \otimes \iota) \Delta(a), \\
		V(\mathbf{1}_{\mathcal{B}(H_0)} \otimes \pi_{h_A}(a))V^* &= (\iota \otimes \pi_{h_A}) \Delta(a),
	\end{align}
	for all elements $a \in A$.
	For more details, see \cite[Section 1.5]{NT13}, \cite[Section 11.3.6]{KS97}.
	
	For a finite dimensional unitary corepresentation $\pi$ and vectors $\xi, \eta \in H_\pi$, we denote by $\omega_{\xi,\eta}^\pi$ the functional on $\mathcal{K}(H_\pi)$ given by $T \mapsto \langle T\xi, \eta \rangle_{H_\pi}$.
	Then the elements $(\omega_{\xi,\eta}^\pi \otimes \mathbf{I}^A) (U^\pi) \in A$, for $\xi, \eta \in H_\pi$, are called the \emph{matrix coefficients} of the corepresentation $\pi$.
	Denote by $\mathcal{A} = \mathcal{O}(\mathbb{G})$ the linear span of all the matrix coeffients of all finite dimensional irreducible unitary corepresentations.
	The set $\mathcal{A}$ is a \emph{Hopf $^*$-algebra}, i.e.\@ a unital $^*$-algebra with a coassociative \emph{comultiplication} map $\Delta : \mathcal{A} \rightarrow \mathcal{A} \otimes \mathcal{A}$, an \emph{antipode} $S : \mathcal{A} \rightarrow \mathcal{A}$ and a \emph{counit} $\epsilon : \mathcal{A} \rightarrow \mathbb{C}$, which satisfy $S(a_{(0)})a_{(1)} = a_{(0)}S(a_{(1)}) = \epsilon(a) \mathbf{1}_\mathcal{A}$ and the \emph{counit property}
	\begin{align}
		\epsilon(a_{(0)}) a_{(1)} = a_{(0)} \epsilon(a_{(1)}) = a,
	\end{align}
	for all $a \in \mathcal{A}$.
	The Hopf $^*$-algebra $\mathcal{A}$ is dense in the $\mathrm{C}^*$-algebra $A$ and its unit and comultiplication are those inherited from $A$.
	We call $\mathcal{A}$ the \emph{coordinate algebra} of the compact quantum group $\mathbb{G}$.
	
	There is a quantum group version of \emph{Peter--Weyl theory} which is crucial for our purposes.
	It states that every unitary corepresentation decomposes into a direct sum of finite dimensional irreducible unitary corepresentations \cite[Theorem 1.5.4]{NT13}.
	For the multiplicative unitaries $W, V$ this gives an orthogonal decomposition of the GNS Hilbert space
	\begin{align}\label{eqn:Peter--Weyl-decomposition}
		H = \bigoplus_{\gamma \in \widehat{\mathbb{G}}} H_\gamma \otimes \overline{H_\gamma},
	\end{align}
	which is respected by the multiplicative unitaries.
	I.e.\@ $W$ and $V$ restrict to a unitary operators on $H_\gamma \otimes \overline{H_\gamma} \otimes H_0$ and $H_0 \otimes H_\gamma \otimes \overline{H_\gamma}$ respectively, for all finite dimensional irreducible unitary corepresentations $\gamma \in \widehat{\mathbb{G}}$.
	More concretely, for any finite dimensional irreducible unitary corepresenation $\gamma$, one can define a bilinear map $\beta : H_\gamma \times \overline{H_\gamma} \rightarrow H$, given by $(\xi,\overline{\eta}) \mapsto \dim_\mathrm{q}(\gamma)^{\nicefrac{1}{2}} (\omega_{\xi,Q_\gamma^{\nicefrac{1}{2}}  \eta}^\gamma \otimes \Lambda)(U^\gamma)$, \emph{cf.\@} the end of Section 1.5 of \cite{NT13}.
	Then, for a fixed vector $\overline{\eta} \in \overline{H_\gamma}$, the induced linear map $\ell^\gamma_{\overline{\eta}} := \beta(\cdot,\overline{\eta}) : H_\gamma \rightarrow H$ intertwines the corepresentations $\gamma$ and $W$.
	If $\overline{\eta} \in H_\gamma$ is a unit vector, the map $\ell^\gamma_{\overline{\eta}}$ is an isometry, and if the vectors $\overline{\eta}, \overline{\eta}' \in H_\gamma$ are orthogonal, so are the images of the maps $\ell^\gamma_{\overline{\eta}}$ and $\ell^\gamma_{\overline{\eta}'}$.
	The span of the images $\ell^\gamma_{\overline{\eta}}(\xi)$, for all vectors $\xi \in H_\gamma$, $\overline{\eta} \in \overline{H_\gamma}$ and all finite dimensional irreducible unitary corepresentations $\gamma \in \widehat{\mathbb{G}}$, in the GNS space $H$ is equal to the image of the coordinate algebra under the GNS map, so a dense subspace of $H$ \cite[Corollary 1.5.5]{NT13}.
	
	The function algebra $A$ of the compact quantum group $\mathbb{G}$ can come in different versions.
	On the one hand, the \emph{universal} function algebra $A_\mathrm{u} = \mathrm{C}_\mathrm{u}(\mathbb{G})$ is given by the universal $\mathrm{C}^*$-completion of the coordinate algebra $\mathcal{A}$ and the comultipliation $\Delta : \mathcal{A} \rightarrow \mathcal{A} \otimes \mathcal{A}$ extends to a $^*$-homomorphism $A_\mathrm{u} \rightarrow A_\mathrm{u} \otimes A_\mathrm{u}$ which is still coassociative and satisfies the Podleś density property.
	Also the counit $\epsilon : \mathcal{A} \rightarrow \mathbb{C}$ extends to a bounded map $A_\mathrm{u} \rightarrow \mathbb{C}$ satisfying the counit property.
	On the other hand, the \emph{reduced} function algebra $A_\mathrm{r} = \mathrm{C}_\mathrm{r}(\mathbb{G})$ is given by the image $\pi_{h_A}(A) \subseteq \mathcal{B}(H)$ under the GNS representation. 
	The cyclic vector $\xi_{h_A}$ for the GNS representation of the function algebra $A$ induces a bi-invariant state $\langle \cdot \xi_{h_A}, \xi_{h_A} \rangle$ on the reduced function algebra $A_\mathrm{r}$ which is still called the \emph{Haar state} and which turns out to be faithful.
	Moreover, the Haar state $h_A$ is faithful on the coordinate algebra $\mathcal{A}$, so that we may regard the reduced function algebra $A_\mathrm{r}$ as a completion of the coordinate algebra.
	In particular, the comultiplication map on $\mathcal{A}$ extends to $A_\mathrm{r}$.
	The universal and reduced function algebra come with $^*$-homomorphisms
	\begin{align}
		A_\mathrm{u} \overset{\pi_\mathrm{u}}{\longrightarrow} A \overset{\pi_\mathrm{r}}{\longrightarrow} A_\mathrm{r},
	\end{align}
	which extend the identity maps on the coordinate algebra $\mathcal{A}$.
	The existence of the $^*$-homomorphism $\pi_\mathrm{u}$ is guaranteed by universality of the $\mathrm{C}^*$-algebra $A_\mathrm{u}$ and the $^*$-homomorphism $\pi_\mathrm{r}$ is the GNS representation.
	
	In general, neither the counit $\epsilon : \mathcal{A} \rightarrow \mathbb{C}$ extends to a bounded map on the reduced function algebra $A_\mathrm{r}$, nor does the Haar state $h_A$ extend to a faithful state on the universal function algebra $A_\mathrm{u}$.
	It turns out, however, that both is true if and only if the $^*$-homomorphism $\pi_\mathrm{r} \circ \pi_\mathrm{u} : A_\mathrm{u} \rightarrow A_\mathrm{r}$ is an isomorphism \cite{BMT01}.
	In that case the compact quantum group $\mathbb{G}$ is called \emph{coamenable}.
	
	We end this section by noting that the dual $A^*$ can be given an algebra structure as follows:
	\begin{align}
		\mu \ast \nu (a) := (\mu \otimes \nu) \Delta(a),
	\end{align}
	for all functionals $\mu, \nu \in A^*$ and elements $a \in A$.
	This restricts to a semigroup structure on the state space $\mathcal{S}(A)$.
	If the counit $\epsilon : A \rightarrow \mathbb{C}$ is bounded (in particular, if compact quantum group $\mathbb{G}$ is coamenable), the counit is a state on $A$ and it is the unit for the convolution product $*$.

	\section{Coactions}\label{sec:Coactions}	
	
	Since we take an operator system point of view throughout, we collect the relevant notions of coactions on operator systems.
	Much of this is algebraic in nature, i.e.\@ it can be seen in the setting of coactions on order unit spaces.
	See \cite{Sai09} for this point of view, and \cite{DC16} for a thorough $\mathrm{C}^*$-algebraic treatment.
	
	For the theory of coactions on operator systems, we follow \cite{dRH24,dR23}.
	In this subsection, we fix a compact quantum group $\mathbb{G}$ and denote its reduced function algebra by $A := \mathrm{C}_\mathrm{r}(\mathbb{G})$ and the comultiplication by $\Delta : A \rightarrow A \otimes A$.
	We furthermore fix an operator system $X$.
	
	\begin{definition}\label{def:Action-on-Operator-System}
		A \emph{right coaction} $\alpha$ of the function algebra $A$ on the operator system $X$ is a uci map $\alpha : X \rightarrow X \otimes A$ such that the \emph{coaction property} 
		\begin{align}\label{eqn:Coaction-property}
			(\alpha \otimes \mathbf{I}^A) \alpha 
			= (\mathbf{I}^X \otimes \Delta) \alpha
		\end{align} 
		and the Podleś density condition 
		\begin{align}\label{eqn:Podles-density-property}
			\overline{\mathrm{span}}((\mathbf{1}_X \otimes A)\alpha(X)) = X \otimes A
		\end{align} 
		are satisfied.
		A \emph{left coaction} $\beta : X \rightarrow A \otimes X$ is defined analogously.
		
		We say that a right coaction $\alpha$ and a left coaction $\beta$ \emph{cocommute} if the following holds:
		\begin{align}\label{eqn:Cocommuting-coactions}
			(\beta \otimes \mathbf{I}^A) \alpha = (\mathbf{I}^A \otimes \alpha) \beta
		\end{align}
	\end{definition}
	
	\begin{notation}
		We use Sweedler notation whenever convenient, i.e.\@ for an element $x \in X$, we write 
		\begin{align}
			x_{(0)} \otimes x_{(1)} := \alpha(x) \in X \otimes A,
		\end{align}
		as well as
		\begin{align}
			x_{(0)} \otimes x_{(1)} \otimes x_{(2)} \in X \otimes A \otimes A,
		\end{align}
		for any of the two maps in (\ref{eqn:Coaction-property}) applied to $x$.
		Similarly,
		\begin{align}
			x_{(-1)} \otimes x_{(0)} \otimes x_{(1)} \in A \otimes X \otimes A,
		\end{align}
		for any of the two maps in (\ref{eqn:Cocommuting-coactions}) applied to $x$.
	\end{notation}
	
	\begin{remark}
		Coactions on an operator system generalize reduced coactions of the reduced function algebra on $\mathrm{C}^*$-algebras.
		Indeed, a reduced ($\mathrm{C}^*$-algebraic) coaction $\alpha : B_\mathrm{r} \rightarrow B_\mathrm{r} \otimes A_\mathrm{r}$, with $A_\mathrm{r}$ the reduced function algebra of the compact quantum group $\mathbb{G}$, is an injective $^*$-homomorphism \cite[Proposition 3.4]{Li09}.
		By \cite[Corollary II.2.2.9]{Bla06} it follows that the map $\alpha$ is an isometry and arguing similarly for the matrix amplifications $\alpha^{(n)} = \alpha \otimes \mathbf{1}_{\mathrm{M}_n(\mathbb{C})}$, it follows that $\alpha$ is uci.
		
		In particular, if the compact quantum group $\mathbb{G}$ is coamenable, every $\mathrm{C}^*$-algebraic coaction of its function algebra on a unital $\mathrm{C}^*$-algebra is a coaction in the operator system sense.
		The comultiplication $\Delta : A \rightarrow A \otimes A$ on the reduced function algebra $A$ is an example of both, a right and left coaction of $A$ on the operator system $A$.
		
		Conversely, if $\alpha : X \rightarrow X \otimes A$ is a coaction in the operator system sense and if $X$ is a unital $\mathrm{C}^*$-algebra, the map $\alpha$ is a $^*$-homomorphism \cite[Proposition 3.7]{dRH24} and hence a $\mathrm{C}^*$-algebraic coaction.
	\end{remark}
	
	For the remainder of this subsection, we fix a right coaction $\alpha : X \rightarrow X \otimes A$.
	
	\begin{remark}\label{rmk:Module-structure-on-states-from-coaction}
		The coaction $\alpha$ gives the dual $X^*$ the structure of a right module for the convolution algebra $(A^*,*)$, which we denote as follows:
		\begin{align}
			\phi \lhd \mu (x) := (\phi \otimes \mu) \alpha(x) = \phi(x_{(0)}) \mu(x_{(1)}),
		\end{align}
		for all elements $x \in X$ and functionals $\phi \in X^*$, $\mu \in A^*$. 
		The right action $\lhd$ restricts to a right action of the semigroup $(\mathcal{S}(A),*)$ on $\mathcal{S}(X)$.
		
		If $Y$ is another operator system with a coaction $\beta : Y \rightarrow Y \otimes A$ and if there is a ucp onto map $\Phi : X \rightarrow Y$ which is equivariant for the coaction $\alpha$ and $\beta$, i.e.
		\begin{align}
			(\Phi \otimes \mathbf{I}^A) \alpha = \beta \Phi,
		\end{align}
		then, for all states $\psi \in \mathcal{S}(Y)$, $\mu \in \mathcal{S}(A)$, we have
		\begin{align}\label{eqn:Pullback-convolved-state-Sweedler}
			\psi(\Phi(x_{(0)})) \mu(x_{(1)}) = \psi(\Phi(x)_{(0)}) \mu(\Phi(x)_{(1)}),
		\end{align}
		for all elements $x \in X$.
		In other words,
		\begin{align}
			(\Phi^*\psi) \lhd \mu = \Phi^*(\psi \lhd \mu) \in \mathcal{S}(X),
		\end{align}
		where $\Phi^* : Y^* \rightarrow X^*$ denotes the pullback map which restricts to a map between the state spaces.
	\end{remark}
	
	A convenient feature of the requirement that coactions on operator systems be uci maps (rather than just ucc) is the following:
	
	\begin{lemma}\label{lem:Counit-Property-Operator-System}
		Assume that the compact quantum group $\mathbb{G}$ is coamenable.
		Then the \emph{counit property} also holds for the coaction $\alpha$, i.e.
		\begin{align}
			(\mathbf{I}^X \otimes \epsilon) \alpha = \mathbf{I}^X,
		\end{align}
		or, in Sweedler notation,
		\begin{align}
			x_{(0)} \epsilon(x_{(1)}) = x,
		\end{align}
		for all elements $x \in X$.
	\end{lemma}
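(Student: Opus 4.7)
The plan is to define $E := (\mathbf{I}^X \otimes \epsilon)\alpha : X \to X$ and show $E = \mathbf{I}^X$. Since $\mathbb{G}$ is coamenable, $\epsilon$ is a state on $A$, so the slice map $\mathbf{I}^X \otimes \epsilon$ is ucp and, composed with the uci coaction $\alpha$, yields a ucp (in particular cb) map $E$.

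The core calculation is that $(E \otimes \mathbf{I}^A)\alpha = \alpha$. Using the coaction property (\ref{eqn:Coaction-property}) and then the counit property $(\epsilon \otimes \mathbf{I}^A)\Delta = \mathbf{I}^A$ on $A$ applied in the middle factor, I would compute
\begin{align}
(E \otimes \mathbf{I}^A)\alpha = (\mathbf{I}^X \otimes \epsilon \otimes \mathbf{I}^A)(\alpha \otimes \mathbf{I}^A)\alpha = (\mathbf{I}^X \otimes \epsilon \otimes \mathbf{I}^A)(\mathbf{I}^X \otimes \Delta)\alpha = \alpha.
\end{align}
To upgrade this identity from the range of $\alpha$ to all of $X \otimes A$, I would invoke the Podle\'s density property (\ref{eqn:Podles-density-property}). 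For any $a \in A$, left multiplication by $a$ is a cb map on $A$, so by the Fubini theorem (\autoref{lem:Fubini-for-cb-maps}) the amplification $E \otimes \mathbf{I}^A$ commutes with left multiplication by $\mathbf{1}_X \otimes a$ on the spatial tensor product. Combined with the previous identity, this yields
\begin{align}
(E \otimes \mathbf{I}^A)\bigl((\mathbf{1}_X \otimes a)\alpha(x)\bigr) = (\mathbf{1}_X \otimes a)\alpha(x), \qquad x \in X,\ a \in A.
\end{align}
By (\ref{eqn:Podles-density-property}) these elements span a dense subspace of $X \otimes A$, and continuity of $E \otimes \mathbf{I}^A$ forces $E \otimes \mathbf{I}^A = \mathbf{I}^{X \otimes A}$. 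Specializing to elementary tensors $x \otimes \mathbf{1}_A$ then gives $E(x) = x$ for all $x \in X$.

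The main subtlety is ensuring that the commutation of $E \otimes \mathbf{I}^A$ with left multiplication by $\mathbf{1}_X \otimes a$ extends from the algebraic tensor product to the spatial one, which is precisely what \autoref{lem:Fubini-for-cb-maps} is designed to handle; without it, one would need a separate density argument at this step. Apart from that, the proof is an operator-system transcription of the standard Hopf-algebraic proof that a coaction satisfies the counit property.
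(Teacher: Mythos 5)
Your proof is correct. The core identity $(E \otimes \mathbf{I}^A)\alpha = \alpha$ (with $E := (\mathbf{I}^X \otimes \epsilon)\alpha$) is established exactly as it should be: coamenability makes $\epsilon$ a bounded counit on the reduced algebra, the coaction property converts $(\alpha \otimes \mathbf{I}^A)\alpha$ into $(\mathbf{I}^X \otimes \Delta)\alpha$, and slicing $\epsilon$ on the middle leg kills $\Delta$ via $(\epsilon \otimes \mathbf{I}^A)\Delta = \mathbf{I}^A$; the commutation of $E \otimes \mathbf{I}^A$ with left multiplication by $\mathbf{1}_X \otimes a$ is clear on the algebraic tensor product and extends by continuity (or via \autoref{lem:Fubini-for-cb-maps} with $L_a$ in the second leg), so your density argument goes through. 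Where you differ from the paper: the paper computes instead $\alpha \circ (\mathbf{I}^X \otimes \epsilon)\alpha = \alpha$ (slicing $\epsilon$ on the outer leg and using the right counit identity) and then concludes at once from \emph{injectivity} of $\alpha$, which holds because coactions are required to be uci --- indeed this is exactly the point of the remark preceding the lemma. Your route replaces injectivity by the Podle\'s density condition (\ref{eqn:Podles-density-property}). The paper's argument is shorter and needs no density input; yours is slightly longer but only uses that $\alpha$ is ucp plus Podle\'s density, so it shows the counit property is not really tied to the isometry requirement in \autoref{def:Action-on-Operator-System}, at the cost of the extra Fubini/continuity bookkeeping you correctly flag.
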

	
	\begin{proof}
		From the counit and coaction properties, together with the Fubini theorem \autoref{lem:Fubini-for-cb-maps}, we obtain
		\begin{align}
			\alpha (\mathbf{I}^X \otimes \epsilon) \alpha
			= (\mathbf{I}^X \otimes \mathbf{I}^X \otimes \epsilon) (\alpha \otimes \mathbf{I}^A) \alpha
			= (\mathbf{I}^X \otimes \mathbf{I}^X \otimes \epsilon) (\mathbf{I}^X \otimes \Delta) \alpha
			= \alpha.
		\end{align}
		Since the map $\alpha$ is uci, it is in particular injective, so the claim follows.
	\end{proof}
	
	\begin{definition}
		A \emph{fixed point} for the coaction $\alpha$ is an element $x \in X$ which satisfies
		\begin{align}
			x_{(0)} \otimes x_{(1)} = x \otimes \mathbf{1}_A.
		\end{align}
		The set of fixed points is denoted by $X^\alpha := \{x \in X \mid \alpha(x) = x \otimes \mathbf{1}_A\}$.
		
		The coaction $\alpha$ is called \emph{ergodic} if its only fixed points are multiples of the unit, i.e.\@ $X^\alpha = \mathbb{C}\mathbf{1}_X$.
	\end{definition}
	
	\begin{example}
		The coaction $\Delta : A \rightarrow A \otimes A$ is ergodic.
	\end{example}
	
	\begin{definition}
		Let $\pi$ be a finite dimensional unitary corepresentation of the compact quantum group $\mathbb{G}$.
		An \emph{intertwiner} of $\pi$ and the coaction $\alpha$ is a linear map $T : H_\pi \rightarrow X$ such that 
		\begin{align}
			\alpha T = (T \otimes \mathbf{I}^A) \delta^\pi,
		\end{align}
		where the map $\delta^\pi : H_\pi \rightarrow H_\pi \otimes A$ is the isometric comodule map associated to the corepresentation $\pi$ by $\delta^\pi(\xi) := U^\pi(\xi \otimes \mathbf{1}_A)$.
		The set of all intertwiners of $\pi$ and $\alpha$ is denoted by $\mathrm{Mor}(\pi,\alpha)$.
		
		For the corepresentation $\pi$, the \emph{isotypical component} is defined by
		\begin{align}
			X^\pi := \span\{T\xi \in X \mid T \in \mathrm{Mor}(\pi,\alpha), \xi \in H_\pi \} \subseteq X.
		\end{align}
		See \autoref{ex:Isotypical-component-trivial-corep} for consistency of the notation $X^\pi$ and $X^\alpha$.
		We denote the linear span of all isotypical components in $X$ of finite dimensional unitary corepresentations by
		\begin{align}
			\mathcal{X} := \sum_{\gamma \in \widehat{\mathbb{G}}} X^\gamma \subseteq X.
		\end{align}
		The set $\mathcal{X}$ is called the \emph{algebraic core} of the operator system $X$ for the coaction $\alpha$.
	\end{definition}	
	
	\begin{example}\label{ex:Isotypical-component-trivial-corep}
		The isotypical component $X^{\mathbf{1}}$ for the trivial corepresentation $\mathbf{1} = \mathbf{1}_A \in \mathcal{B}(\mathbb{C}) \otimes A$ coincides with the set of fixed points $X^\alpha$.
		Indeed, by definition, a linear map $T : \mathbb{C} \rightarrow X$ is an intertwiner of the corepresentation $\mathbf{1}$ and the coaction $\alpha$ if and only if 
		$\alpha (T (\lambda)) = (T \otimes \mathbf{I}^A)\mathbf{1}(\lambda)$, for all $\lambda \in H_\mathbf{1} = \mathbb{C}$, which we may rewrite as $(T(\lambda))_{(0)} \otimes (T(\lambda))_{(1)} =  T(\lambda) \otimes \mathbf{1}_A$.
		Hence $T \in \mathrm{Mor}(\mathbf{1},\alpha)$ if and only if $T(\lambda) \in X$ is a fixed point for the coaction $\alpha$.
	\end{example}
	
	\begin{definition}
		A state $\phi \in \mathcal{S}(X)$ is called \emph{invariant} for the coaction $\alpha$ if 
		\begin{align}
			\phi \lhd \mu = \mu(\mathbf{1}_A) \phi,
		\end{align}
		for all functionals $\mu \in A^*$.
	\end{definition}
	
	\begin{lemma}\label{lem:Existence-Invariant-State-for-Action}
		The following properties hold:
		\begin{enumerate}
			\item The set of fixed points $X^\alpha$ is an operator subsystem of $X$.
			\item The following map $E_\alpha : X \rightarrow X^\alpha$ is a ucp conditional expectation:
			\begin{align}
				E_\alpha(x) = x_{(0)} h_A(x_{(1)}),
			\end{align} 
			for all $x \in X$.
			\item If the coaction $\alpha$ is ergodic, the following defines an invariant state $h_X \in \mathcal{S}(X)$:
			\begin{align}\label{eqn:Invariant-State}
				E_\alpha(x) = h_X(x) \mathbf{1}_X,
			\end{align}
			for all $x \in X$.
			The state $h_X$ is the unique invariant state on $X$.
		\end{enumerate}
	\end{lemma}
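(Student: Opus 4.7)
The plan is to establish the three parts of the lemma in sequence, with the key computation being a Sweedler-index manipulation using the left- and right-invariance of the Haar state.

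For part (1), I would verify the three defining properties of an operator subsystem of $X$ directly. The unit $\mathbf{1}_X$ is a fixed point since $\alpha$ is unital. The subspace $X^\alpha$ is self-adjoint because $\alpha$ is ucp, hence self-adjoint: if $\alpha(x) = x \otimes \mathbf{1}_A$, then $\alpha(x^*) = \alpha(x)^* = x^* \otimes \mathbf{1}_A$. Closedness follows from boundedness of $\alpha$, and the matrix order structure is inherited from $X$ via the cp map $\alpha$.

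For part (2), I would define $E_\alpha := (\mathbf{I}^X \otimes h_A) \alpha$ and obtain that it is ucp by the Fubini theorem (\autoref{lem:Fubini-for-cb-maps}) and the Haar state being a state. The key point is to verify that $E_\alpha(x) \in X^\alpha$. This follows from a short computation:
\begin{align}
\alpha \circ E_\alpha = (\mathbf{I}^X \otimes \mathbf{I}^A \otimes h_A)(\alpha \otimes \mathbf{I}^A) \alpha
= (\mathbf{I}^X \otimes \mathbf{I}^A \otimes h_A)(\mathbf{I}^X \otimes \Delta) \alpha
= E_\alpha \otimes \mathbf{1}_A,
\end{align}
where the middle equality uses the coaction property and the last uses right-invariance of $h_A$ in the form $(\mathbf{I}^A \otimes h_A)\Delta = h_A(\cdot)\mathbf{1}_A$. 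Finally, $E_\alpha$ is the identity on $X^\alpha$ since $E_\alpha(x) = x \cdot h_A(\mathbf{1}_A) = x$ whenever $\alpha(x) = x \otimes \mathbf{1}_A$.

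For part (3), ergodicity forces $E_\alpha(X) \subseteq \mathbb{C}\mathbf{1}_X$, so formula \eqref{eqn:Invariant-State} defines $h_X$ as a linear functional, which is a state because $E_\alpha$ is ucp. To see invariance, I would show the stronger identity $(h_X \otimes \mathbf{I}^A)\alpha(x) = h_X(x) \mathbf{1}_A$ and then apply an arbitrary $\mu \in A^*$. Tensoring the equation $h_X(y)\mathbf{1}_X = E_\alpha(y)$ with $\mathbf{I}^A$ and setting $y = x_{(0)}$, coassociativity of $\alpha$ converts this into $(\mathbf{I}^X \otimes h_A \otimes \mathbf{I}^A)(\mathbf{I}^X \otimes \Delta)\alpha(x)$, and left-invariance of $h_A$ collapses the middle factor, yielding $E_\alpha(x) \otimes \mathbf{1}_A = h_X(x)\mathbf{1}_X \otimes \mathbf{1}_A$. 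For uniqueness, given any invariant state $\phi$, applying $\mu = h_A$ to the equation $(\phi \otimes \mu)\alpha(x) = \phi(x)\mu(\mathbf{1}_A)$ rewrites the left side as $\phi(E_\alpha(x)) = h_X(x)$, while the right side is $\phi(x)$.

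The only mildly delicate step will be the bookkeeping in the invariance computation in part (3); the use of Sweedler notation together with \autoref{lem:Fubini-for-cb-maps} should make everything rigorous without extra machinery, and no appeal to the operator-system generalities beyond ucp/uci is required.
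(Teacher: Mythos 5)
Your proposal is correct and follows essentially the same route as the paper: the same verification of unitality and self-adjointness for $X^\alpha$, the same Sweedler/Fubini computation with Haar invariance showing $E_\alpha$ lands in $X^\alpha$ and fixes it, and the same uniqueness argument via $\mu = h_A$. The only (cosmetic) difference is in the invariance step of part (3), where you establish the slice-map identity $(h_X \otimes \mathbf{I}^A)\alpha(x) = h_X(x)\mathbf{1}_A$ directly by applying $E_\alpha \otimes \mathbf{I}^A$ to $\alpha(x)$, whereas the paper writes $h_X = \phi \circ E_\alpha$ for an auxiliary state $\phi$ and computes $(h_X \otimes \mu)\alpha(x)$; both rest on the identical use of the coaction property and Haar invariance.
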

	
	For parts (2) and (3) of the proof, we follow essentially the arguments in \cite[Lemma 4]{Boc95}.
	
	\begin{proof}
		(1)
		Clearly, the unit $\mathbf{1}_X$ is a fixed point for the coaction $\alpha$.
		Moreover, the set of fixed points $X^\alpha$ is self-adjoint, since $\alpha(x^*) = \alpha(x)^* = x^* \otimes \mathbf{1}_A$, for $x \in X^\alpha$.
		This shows that $X^\alpha$ is an operator system.
		
		(2)
		Note that $E_\alpha$ is a ucp map being the composition of the uci map $\alpha$ and the ucp map $\mathbf{I}^X \otimes h_A$.
		If $x \in X^\alpha$ is a fixed point, we have $E_\alpha(x) =x_{(0)} h_A(x_{(1)}) = x h(\mathbf{1}_A) = x\alpha$.
		For $x \in X$, we obtain by invariance of the Haar state $h_A$:
		\begin{align}
			\alpha(E_\alpha(x))
			&= \alpha \left( (\mathbf{I}^X \otimes h_A) \alpha (x) \right) \\
			&= x_{(0)} \otimes x_{(1)} h_A(x_{(2)}) \\
			&= x_{(0)} \otimes h_A(x_{(1)}) \mathbf{1}_A \\
			&= E_\alpha(x) \otimes \mathbf{1}_A,
		\end{align}
		which shows that $E_\alpha(X) \subseteq X^\alpha$.
		
		(3)
		By ergodicity, the range of $E_\alpha$ is $X^\alpha = \mathbb{C} \mathbf{1}_X$.
		In particular, the map $h_X : X \rightarrow \mathbb{C}$ is ucp, whence a state.
		We check that the state $h_X$ is invariant.
		To this end, let $\phi \in \mathcal{S}(X)$ be an arbitrary state and $\mu \in A^*$ a functional.
		Note that we then have $h_X(x) = \phi(E_\alpha(x)) = \phi(x_{(0)}) h_A(x_{(1)})$, for all $x \in X$.
		With this and using invariance of the Haar state $h_A$, we obtain
		\begin{align}
			(h_X \lhd \mu)(x)
			&= h_X(x_{(0)}) \mu(x_{(1)}) \\
			&= \phi(x_{(0)}) h_A(x_{(1)}) \mu(x_{(2)}) \\
			&= \phi(x_{(0)}) h_A(x_{(1)}) \mu(\mathbf{1}_A) \\
			&= h_X(x) \mu(\mathbf{1}_A)
		\end{align}
		for all $x \in X$.
		This shows invariance of the state $h_X$.
		
		To show uniqueness of the invariant state $h_X$, let $\phi \in \mathcal{S}(X)$ be another invariant state, i.e.\@ a state which satisfies $\phi(x_{(0)})\mu(x_{(1)}) = \phi(x)\mu(\mathbf{1}_A)$, for all functionals $\mu \in A^*$.
		Then the following holds:
		\begin{align}
			\phi(x)
			= \phi(x) h_A(\mathbf{1}_A)
			= \phi(x_{(0)}) h_A(x_{(1)})
			= \phi(E_\alpha(x))
			= h_X(x)
		\end{align}
		for all $x \in X$.
	\end{proof}
	
	\begin{lemma}\label{lem:Properties-isotypical-components}
		Let $\pi \in \widehat{\mathbb{G}}$ be a finite dimensional unitary corepresentation of $\mathbb{G}$.
		The following properties hold:
		\begin{enumerate}
			\item There is an idempotent map $E_\pi : X \rightarrow X^\pi$.
			\item The isotypical component $X^\pi$ is a closed subspace of $X$.
			\item The algebraic core $\mathcal{X}$ is a dense operator subsystem of $X$.
			\item The coaction $\alpha$ restricts to the isotypical component, i.e.
			\begin{align}
				\alpha(X^\pi) \subseteq X^\pi \otimes \mathcal{A}^\pi,
			\end{align}
			where $\mathcal{A}^\pi$ is the coalgebra of matrix coefficients of the corepresentation $\pi$.
			In particular, the coaction $\alpha$ restricts to a Hopf $^*$-algebra coaction $\alpha : \mathcal{X} \rightarrow \mathcal{X} \otimes \mathcal{A}$, i.e.\@ that in addition to having the \emph{coaction property}, the map $\alpha$ is \emph{$^*$-preserving} and \emph{counital}, i.e.\@ $x_{(0)}\epsilon(x_{(1)}) = x$, for all $x \in \mathcal{X}$ and where $\epsilon : \mathcal{A} \rightarrow \mathbb{C}$ is the counit of the Hopf $^*$-algebra $\mathcal{A}$.
			\item The ucp conditional expectation $E_\alpha : X \rightarrow X^\alpha$ is faithful, i.e., for all positive elements $x \in X_+$, if $E_\alpha(x) = 0$ it follows that $x = 0$. 
		\end{enumerate}
	\end{lemma}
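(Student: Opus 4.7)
The plan is to construct Peter--Weyl projections $E_\pi$ onto each isotypical component, deduce (1) and (2) from their boundedness and idempotency, use them together with the Podleś density for (3), carry out a direct computation with intertwiners and the corepresentation identity for (4), and for (5) combine faithfulness of the Haar state with injectivity of $\alpha$. The delicate step is (3): individual Peter--Weyl projections on $A$ do not sum in operator norm, so density of $\mathcal{X}$ in $X$ must be extracted from the Podleś density of $(\mathbf{1}_X \otimes A)\alpha(X)$ in $X \otimes A$ rather than by directly decomposing $X$.

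For (1) and (2), define $E_\pi := (\mathbf{I}^X \otimes \psi_\pi)\alpha$, where $\psi_\pi \in A^*$ is the Peter--Weyl projection functional associated to $\pi$, constructed from the Haar state and the character of $\pi$ as in \cite{NT13}, and characterized by the property that on the Hopf algebra $\mathcal{A} = \bigoplus_\gamma \mathcal{A}^\gamma$ the slice $(\mathbf{I}^A \otimes \psi_\pi)\Delta$ realizes the block projection onto $\mathcal{A}^\pi$. Idempotency of $E_\pi$ then follows from $\psi_\pi \ast \psi_\pi = \psi_\pi$ via the coaction property and the Fubini theorem \autoref{lem:Fubini-for-cb-maps}. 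The key claim is that $\alpha(y) \in X \otimes \mathcal{A}^\pi$ forces $y \in X^\pi$: writing $\alpha(y) = \sum_{ij} y_{ij} \otimes u^\pi_{ij}$ in terms of matrix coefficients and comparing the two sides of the coaction identity $(\alpha \otimes \mathbf{I}^A)\alpha = (\mathbf{I}^X \otimes \Delta)\alpha$ yields $\alpha(y_{pq}) = \sum_i y_{iq} \otimes u^\pi_{ip}$, so that each map $e_p \mapsto y_{pq}$ is an intertwiner in $\mathrm{Mor}(\pi,\alpha)$. Since $\alpha(E_\pi(x)) \in X \otimes \mathcal{A}^\pi$ for every $x$, this gives $\ran(E_\pi) \subseteq X^\pi$; the reverse inclusion, together with $E_\pi|_{X^\pi} = \mathbf{I}^{X^\pi}$, is read off from the intertwining relation $\alpha T = (T \otimes \mathbf{I}^A)\delta^\pi$ and the normalization of $\psi_\pi$. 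Then (2) is automatic, since $X^\pi = \ran(E_\pi) = \ker(\mathbf{I}^X - E_\pi)$ is closed as $E_\pi$ is cb.

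For (3), the Podleś density property allows one to approximate any $x \in X$ by finite sums of elements of the form $(\mathbf{I}^X \otimes \omega)\bigl((\mathbf{1}_X \otimes a)\alpha(y)\bigr)$ with $\omega \in A^*$, $a \in A$, $y \in X$; replacing each $a$ by a nearby element of the coordinate algebra supported on finitely many Peter--Weyl blocks produces an approximant lying in $\mathcal{X}$. That $\mathcal{X}$ is in addition a unital self-adjoint subspace follows from $\mathbf{1}_X \in X^{\mathbf{1}} \subseteq \mathcal{X}$ via \autoref{ex:Isotypical-component-trivial-corep}, and from $(X^\pi)^* \subseteq X^{\overline{\pi}}$, obtained by taking adjoints in the intertwining relation.

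For (4), taking $x = T\xi \in X^\pi$, the direct computation
\begin{align}
\alpha(x) = (T \otimes \mathbf{I}^A)\delta^\pi(\xi) = (T \otimes \mathbf{I}^A)\bigl(U^\pi(\xi \otimes \mathbf{1}_A)\bigr) \in X^\pi \otimes \mathcal{A}^\pi
\end{align}
gives $\alpha(\mathcal{X}) \subseteq \mathcal{X} \otimes \mathcal{A}$. The restricted map is $^*$-preserving because the uci map $\alpha$ is self-adjoint, and counital because $(\mathbf{I}^{H_\pi} \otimes \epsilon)U^\pi = \mathbf{I}^{H_\pi}$ is a standard consequence of $(\mathbf{I} \otimes \Delta)U^\pi = U^\pi_{12}U^\pi_{13}$ and the counit property of $\Delta$. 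For (5), if $x \geq 0$ and $E_\alpha(x) = 0$, then $\alpha(x) \geq 0$ in $X \otimes A$ by ucp-ness of $\alpha$, and for any state $\phi \in \mathcal{S}(X)$ the positive element $(\phi \otimes \mathbf{I}^A)\alpha(x) \in A$ satisfies $h_A\bigl((\phi \otimes \mathbf{I}^A)\alpha(x)\bigr) = \phi(E_\alpha(x)) = 0$, hence vanishes by faithfulness of $h_A$ on $A_\mathrm{r}$. Running over all product states $\phi \otimes \omega$ on $X \otimes A$ and using that such states separate positive elements of the minimal tensor product, we conclude $\alpha(x) = 0$, and injectivity of the uci map $\alpha$ gives $x = 0$.
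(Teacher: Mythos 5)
Your parts (1), (2), (4) and (5) follow the standard route: the paper itself only proves (4) directly (by the same computation $\alpha(T\xi)=(T\otimes\mathbf{I}^A)\delta^\pi(\xi)$ that you give) and (for the rest) cites \cite[Proposition 3.4]{dRH24} and \cite[Theorem 3.16, Lemma 3.19]{DC16}, whose arguments your sketch essentially reproduces; your (5) in particular is the argument of \cite[Lemma 3.19]{DC16}. However, there is a genuine gap in (3). After approximating $x\otimes\mathbf{1}_A$ by $\sum_i(\mathbf{1}_X\otimes a_i)\alpha(y_i)$ and slicing with a functional $\omega$, you claim that replacing each $a_i$ by a nearby $b_i\in\mathcal{A}$ produces an approximant of $x$ lying in $\mathcal{X}$. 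For a general $\omega\in A^*$ this step fails: the element $(\mathbf{I}^X\otimes\omega(b\,\cdot))\alpha(y)$ need not lie in $\mathcal{X}$, because the map $(\mathbf{I}^A\otimes\omega(b\,\cdot))\Delta$ preserves \emph{every} Peter--Weyl block of $\mathcal{A}$ and does not truncate to finitely many of them (take $b=\mathbf{1}_A$: you simply get a generic slice of $\alpha(y)$, which is not in the algebraic core in general, as the counit property is not even available here without coamenability). The argument requires the specific choice $\omega=h_A$: one still has $(\mathbf{I}^X\otimes h_A)(x\otimes\mathbf{1}_A)=x$, and the Schur orthogonality relations show that $h_A(b\,\cdot)$ with $b\in\mathcal{A}$ annihilates all but finitely many blocks, so that $\alpha\bigl((\mathbf{I}^X\otimes h_A(b\,\cdot))\alpha(y)\bigr)\in X\otimes F$ with $F$ a finite sum of coefficient coalgebras, and only then does your key claim place the approximant in $\mathcal{X}$. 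This use of the Haar state and the orthogonality relations is the essential idea of the density proof and is missing from your write-up.

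A smaller repair is needed in (1): your key claim is only half proved. From $\alpha(y)=\sum_{ij}y_{ij}\otimes u^\pi_{ij}$ and coassociativity you correctly obtain $y_{pq}\in X^\pi$, but you must still recover $y$ itself from these coefficients, and you cannot invoke \autoref{lem:Counit-Property-Operator-System} since coamenability is not assumed in this lemma. Either argue algebraically: with $z:=(\mathbf{I}^X\otimes\epsilon)\alpha(y)$ (well defined, the second leg being finite dimensional), applying $\mathbf{I}^X\otimes\mathbf{I}^A\otimes\epsilon$ to both sides of $(\alpha\otimes\mathbf{I}^A)\alpha(y)=(\mathbf{I}^X\otimes\Delta)\alpha(y)$ gives $\alpha(z)=\alpha(y)$, and injectivity of $\alpha$ yields $y=z=\sum_i y_{ii}\in X^\pi$; or, for your actual application $y=E_\pi(x)$, simply use idempotency: $y=E_\pi(y)=\sum_{ij}y_{ij}\,\psi_\pi(u^\pi_{ij})\in X^\pi$. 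Finally, for the normalization $(\mathbf{I}^{H_\pi}\otimes\psi_\pi)(U^\pi)=\mathbf{I}^{H_\pi}$ you need the quantum ($Q_\pi$-twisted) character in the definition of $\psi_\pi$, not the ordinary one, outside the Kac case. With these points repaired, your proposal matches the arguments of the references the paper relies on.
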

	
	\begin{proof}
		Most of the claims are proven in \cite[Proposition 3.4]{dRH24}, see also \cite[Section 3]{DC16} for more details (with the apparent modifications for coactions on operator systems rather than $\mathrm{C}^*$-algebras).
		For the fact that the algebraic core $\mathcal{X}$ is an operator system, i.e.\@ unital and closed under the involution $^*$, we refer to \cite[Theorem 3.16]{DC16}. 
		
		To see (4), let $\xi \in H_\pi \cong \mathcal{B}(\mathbb{C},H_\pi)$ and $T \in \mathrm{Mor}(\pi,\alpha)$, and note that $\delta^\pi(\xi) = U^\pi(\xi \otimes \mathbf{1}_A) \in H_\pi \otimes A$ can be canonically identified with the linear map $H_\pi^* \ni \eta^* \mapsto (\eta^* \otimes \mathbf{1}_A) U^\pi (\xi \otimes \mathbf{1}_A) = (\omega^\pi_{\eta,\xi} \otimes \mathbf{I}^A) (U^\pi) \in \mathcal{A}^\pi$.
		It follows that $\alpha (T\xi) = (T \otimes \mathbf{I}^A) \delta^\pi(\xi) \in X^\pi \otimes \mathcal{A}^\pi$.
		
		The proof of faithfulness of the ucp conditional expectation $E_\alpha$ is as in \cite[Lemma 3.19]{DC16}.
	\end{proof}
	
	We point out that the coalgebra $\mathcal{A}^\pi$ of matrix coefficients of the corepresentation $\pi$ coincides with the isotypical component $A^\pi$ for the coaction $\Delta$ of $A$ on itself.
	Since we will not use this fact we refer to the remarks below \cite[Definition 3.13]{DC16} for a proof.
	
	The following lemma is an important tool for our later arguments. 
	The proof of the ergodicity statement is inspired by the proof of \cite[Proposition 9]{GEvS23}.
	
	\begin{lemma}\label{lem:Induced-action}
		Let $\tau : X \rightarrow Y$ be a ucp map onto an operator system $Y$.
		Assume that there is a well-defined uci map $\alpha^\tau : Y \rightarrow Y \otimes A$ such that the following holds:
		\begin{align}\label{eqn:Induced-action}
			(\tau \otimes \mathbf{I}^A) \alpha = \alpha^\tau \tau
		\end{align}
		Then the map $\alpha^\tau$ is a coaction on the operator system $Y$.
		Moreover, if the coaction $\alpha$ is ergodic, so is the coaction $\alpha^\tau$.
	\end{lemma}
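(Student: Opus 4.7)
The plan is to verify directly the two defining properties of a coaction not already baked into the hypothesis on $\alpha^\tau$ (namely coassociativity and Podleś density), and then to handle ergodicity by transporting the mean ergodic conditional expectation from $X$ to $Y$. Throughout, the only non-trivial inputs are the intertwining identity (\ref{eqn:Induced-action}), surjectivity of $\tau$, and the Fubini theorem \autoref{lem:Fubini-for-cb-maps}.

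For coassociativity I would pick, for any $y \in Y$, a preimage $x \in X$ with $\tau(x) = y$, and chase
\begin{align*}
(\alpha^\tau \otimes \mathbf{I}^A)\alpha^\tau(y)
&= (\alpha^\tau \otimes \mathbf{I}^A)(\tau \otimes \mathbf{I}^A)\alpha(x)
= (\tau \otimes \mathbf{I}^A \otimes \mathbf{I}^A)(\alpha \otimes \mathbf{I}^A)\alpha(x) \\
&= (\tau \otimes \mathbf{I}^A \otimes \mathbf{I}^A)(\mathbf{I}^X \otimes \Delta)\alpha(x)
= (\mathbf{I}^Y \otimes \Delta)\alpha^\tau(y),
\end{align*}
where the second equality inserts (\ref{eqn:Induced-action}) once more, the third is the coaction property of $\alpha$, and the fourth rewrites both sides as $(\tau \otimes \Delta)\alpha(x)$ via \autoref{lem:Fubini-for-cb-maps}. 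For Podleś density, the key small observation is that, since $\tau$ is unital, the identity $(\tau \otimes \mathbf{I}^A)\bigl((\mathbf{1}_X \otimes a)z\bigr) = (\mathbf{1}_Y \otimes a)(\tau \otimes \mathbf{I}^A)(z)$ holds on elementary tensors $z = x' \otimes b$, hence on all of $X \otimes A$ by linearity and continuity. Combined with surjectivity of $\tau$, this yields $(\mathbf{1}_Y \otimes A)\alpha^\tau(Y) = (\tau \otimes \mathbf{I}^A)\bigl((\mathbf{1}_X \otimes A)\alpha(X)\bigr)$, and passing to closed linear spans and using Podleś density of $\alpha$ together with continuity of $\tau \otimes \mathbf{I}^A$ places $Y \odot A$, hence $Y \otimes A$, in the closure.

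For ergodicity, let $h_X \in \mathcal{S}(X)$ be the unique invariant state for $\alpha$ provided by \autoref{lem:Existence-Invariant-State-for-Action}, so that $(\mathbf{I}^X \otimes h_A)\alpha(x) = h_X(x)\mathbf{1}_X$ for all $x \in X$, and set $E := (\mathbf{I}^Y \otimes h_A)\alpha^\tau : Y \to Y$. For $x \in X$, (\ref{eqn:Induced-action}) and \autoref{lem:Fubini-for-cb-maps} give
\begin{align*}
E(\tau(x)) = (\mathbf{I}^Y \otimes h_A)(\tau \otimes \mathbf{I}^A)\alpha(x) = \tau\bigl((\mathbf{I}^X \otimes h_A)\alpha(x)\bigr) = h_X(x)\mathbf{1}_Y,
\end{align*}
so surjectivity of $\tau$ forces $E(Y) \subseteq \mathbb{C}\mathbf{1}_Y$. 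For any fixed point $y \in Y^{\alpha^\tau}$ one has $E(y) = (\mathbf{I}^Y \otimes h_A)(y \otimes \mathbf{1}_A) = y$, whence $y \in \mathbb{C}\mathbf{1}_Y$ as required. I do not anticipate a serious obstacle; the only mild subtlety is, in the Podleś step, keeping careful track of the interaction between the non-multiplicative map $\tau \otimes \mathbf{I}^A$ and multiplication by $\mathbf{1}_X \otimes a$, which works out precisely because the first tensor factor is the unit.
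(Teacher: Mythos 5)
Your proposal is correct and follows essentially the same route as the paper: the coassociativity chase and the Podleś density argument via $(\mathbf{1}_Y \otimes a)\alpha^\tau(\tau(x)) = (\tau \otimes \mathbf{I}^A)\bigl((\mathbf{1}_X \otimes a)\alpha(x)\bigr)$ together with surjectivity of $\tau$ are exactly the paper's steps. Your ergodicity argument is the paper's computation merely repackaged through the map $E = (\mathbf{I}^Y \otimes h_A)\alpha^\tau$ rather than evaluated directly on a preimage of a fixed point, resting on the same invariant state $h_X$ from \autoref{lem:Existence-Invariant-State-for-Action}.
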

	
	\begin{notation}
		We write (\ref{eqn:Induced-action}) in Sweedler notation as
		\begin{align}\label{eqn:Induced-action-Sweedler}
			\tau(x_{(1)}) \otimes x_{(2)} 
			= (\tau(x))_{(1)} \otimes (\tau(x))_{(2)}
			= y_{(1)} \otimes y_{(2)} \in Y \otimes A,
		\end{align}
		for all elements $x \in X$ and $y \in Y$ with $\tau(x) = y$.
	\end{notation}
	
	\begin{proof}
		The coaction property for $\alpha^\tau$ readily follows from that for $\alpha$:
		\begin{align}
			(\alpha^\tau \otimes \mathbf{I}^A) \alpha^\tau \tau
			&= (\alpha^\tau \otimes \mathbf{I}^A) (\tau \otimes \mathbf{I}^A) \alpha \\
			&= (\tau \otimes \mathbf{I}^A \otimes \mathbf{I}^A) (\alpha \otimes \mathbf{I}^A) \alpha \\
			&= (\tau \otimes \mathbf{I}^A \otimes \mathbf{I}^A) (\mathbf{I}^A \otimes \Delta) \alpha  \\
			&= (\mathbf{I}^Y \otimes \Delta) (\tau \otimes \mathbf{I}^A) \alpha \\
			&= (\mathbf{I}^Y \otimes \Delta) \alpha^\tau \tau
		\end{align}
		
		Similarly, the Podleś density property $\overline{\mathrm{span}}((\mathbf{1}_Y \otimes A) \alpha^\tau(Y)) = Y \otimes A$ readily follows from that of $\alpha$.
		Indeed the span of elements of the form
		\begin{align}
			(\mathbf{1}_Y \otimes a) \alpha^\tau(\tau(x))
			= (\mathbf{1}_Y \otimes a) \left((\tau \otimes \mathbf{I}^A) (\alpha(x))\right) 
			= (\tau \otimes \mathbf{I}^A) \left((\mathbf{1}_A \otimes a) \alpha(x)\right),
		\end{align}	
		with $a \in A$, $x \in X$, is dense in $Y \otimes A$, since $\tau$ is onto.
		
		We assume now that the coaction $\alpha$ is ergodic.
		Recall from \autoref{lem:Existence-Invariant-State-for-Action} that there is a unique state $h_X$ on $X$ which is invariant for the coaction $\alpha$, and which can be defined by $h_X(x)\mathbf{1}_X = x_{(0)} h_A(x_{(1)})$, for all elements $x \in X$.
		For any fixed point $y \in Y^{\alpha^\tau}$, i.e.\@ which satisfies $\alpha^\tau(y) = y \otimes \mathbf{1}_A$, and any element $x \in X$ with $\tau(x) = y$, the following holds:
		\begin{align}\label{eqn:Fixed-points-of-induced-action-are-scalars}
			y
			= \tau(x)
			= \tau(x) h_A(\mathbf{1}_A)
			= \tau(x_{(0)}) h_A(x_{(1)})
			= \tau(h_X(x)\mathbf{1}_X)
			= h_X(x)\mathbf{1}_Y
		\end{align}
		Therefore, the fixed point $y$ is an element of $\mathbb{C}\mathbf{1}_Y$ and thus the induced coaction $\alpha^\tau$ is ergodic.
	\end{proof}

	\section{Compact quantum metric spaces}\label{sec:CQMS}
	
	\subsection{Lip-norms}
	
	The ideas of \emph{Lip-norms} and \emph{compact quantum metric spaces} go back at least to \cite{Rie98, Rie99, Rie04} where they were developed for order unit spaces.
	We work exclusively in the setting of operator systems.
	
	\begin{definition}
		Let $X$ be an operator system.
		By a \emph{seminorm} $L$ on $X$ we always understand an extended seminorm $L : X \rightarrow [0,\infty]$.
		A seminorm $L : X \rightarrow [0,\infty]$ is called a \emph{Lipschitz seminorm} if it satisfies the following properties:
		\begin{enumerate}
			\newcounter{counter}
			\item It has \emph{dense domain}, i.e.\@ $\mathrm{Dom}(L) := \{x \in X \mid L(x) < \infty\}$ is dense in $X$,
			\item it is \emph{*-invariant}, i.e.\@ $L(x^*) = L(x)$, for all $x \in X$,
			\item it is $0$ on scalars, i.e.\@ $\mathbb{C}\mathbf{1}_X \subseteq \ker(L)$.
			\setcounter{counter}{\value{enumi}}
		\end{enumerate}
		A Lipschitz seminorm $L$ is called a \emph{Lip-norm} if additionally the following property holds:
		\begin{enumerate}
			\setcounter{enumi}{\value{counter}}
			\item The induced \emph{Monge--Kantorovich} distance 
			\begin{align}
				d^{L}(\phi,\psi) := \sup \{|\phi(x)-\psi(x)| \mid L(x) \leq 1\}
			\end{align}
			metrizes the weak* topology on the state space $\mathcal{S}(X)$.
		\end{enumerate}
	\end{definition}
	
	We emphasize the equivalence of the order unit space and operator system approach \cite[Proposition 2.8]{KK22}.
	
	\begin{remark}
		Let $X$ be an operator system with connected state space $\mathcal{S}(X)$.
		Then the kernel of any Lip-norm $L$ on $X$ is actually equal to $\mathbb{C} \mathbf{1}_X$ \cite[Lemma 2.2]{KK21}.
		Indeed, any two states $\phi, \psi \in \mathcal{S}(X)$ must be at finite distance from each other, since $d^L$ metrizes the weak$^*$-topology, for which $\mathcal{S}(X)$ is compact (and thus has finite diameter as a metric space).
		But since states on $X$ separate points (in the sense that if $\phi(x) = 0$, for all $\phi \in \mathcal{S}(X)$, it follows that $x = 0$ \cite[Theorem 4.3.4(i)]{KR1}), for every $x \in X \setminus \mathbb{C}\mathbf{1}_X$, there must be two states $\phi, \psi \in \mathcal{S}(X)$ such that $\phi(x) \neq \psi(x)$.
		If we now assume that $L(x)=0$ we have that $d^L(\phi,\psi) \geq |\phi(\lambda x) - \psi(\lambda x)|$, for all $\lambda \in \mathbb{C}$, by definition of the Monge--Kantorovich distance.
		Hence, $d^L(\phi,\psi) = \infty$ which contradicts the assumption that $d^L$ metrizes the compact set $\mathcal{S}(X)$.
	\end{remark}
	
	\begin{definition}
		A \emph{compact quantum metric space} is an operator system equipped with a Lip-norm.
	\end{definition}
	
	\begin{definition}
		A \emph{morphism} between two compact quantum metric spaces $(X,L_X)$ and $(Y,L_Y)$ is a ucp map $\Phi : X \rightarrow Y$, for which there is a constant $C \geq 0$ such that $L_Y(\Phi(x)) \leq C L_X(x)$, for all $x \in X$.
		A morphism $\Phi$ is called \emph{Lip-norm contractive} if $L_Y(\Phi(x)) \leq L_X(x)$, for all $x \in X$.
	\end{definition}
	
	\begin{definition}
		Let $X$ be an operator system and let $L : X \rightarrow [0,\infty]$ be a Lipschitz seminorm.
		Denote by $\lVert\cdot\rVert_{\sfrac{X}{\mathbb{C}}}$ and $L_{\sfrac{X}{\mathbb{C}}}$ the induced norm and seminorm on the quotient $\sfrac{X}{\mathbb{C}\mathbf{1}_X}$ respectively.
		The \emph{radius} of $X$ is the number $r_X := \inf\{r \in [0,\infty] \mid \lVert\cdot\rVert_{\sfrac{X}{\mathbb{C}}} \leq r  L_{\sfrac{X}{\mathbb{C}}}\}$.
	\end{definition}
	
	When working in the operator system setting of compact quantum metric spaces it might come to surprise that the Monge--Kantorovich distances of Lip-norms are only required to metrize the weak$^*$ topology without any requirement on the matrix state spaces.
	Indeed, a Lipschitz seminorm gives rise to Monge--Kantorovich distances on the matrix state spaces (see below) and it turns out that in the case of a Lip-norm, the diameters of all the matrix state spaces coincide \cite[Proposition 2.9]{Ker03} and the Monge--Kantorovich distances metrize the point-norm topologies on all matrix state spaces \cite[Proposition 2.12]{Ker03}.
	
	\begin{definition}
		Let $X$ be an operator system and $n \in \mathbb{N}$ be a positive integer.
		We denote by $\mathcal{S}_n(X)$ the set of ucp maps $X \rightarrow M_n(\mathbb{C})$ and call it a \emph{matrix state space} of $X$.
		If $L$ is a Lipschitz seminorm on $X$, we denote by 
		\begin{align}
			d^{L,n}(\phi,\psi) := \sup_{x \in X \setminus \ker(L_X)} \frac{\lVert\phi(x) - \psi(x)\rVert}{L(x)}
		\end{align}
		the induced \emph{Monge--Kantorovich} distance on $\mathcal{S}_n(X)$.
	\end{definition}
	
	\begin{notation}
		For a vector space $V$, (extended) seminorms $p, q$ on $V$ and any positive real number $r > 0$, we set $\mathrm{B}_r^p := \{ v \in V \mid p(v) < r\}$, $\overline{\mathrm{B}}_r^p := \{ v \in V \mid p(v) \leq r\}$, $\mathrm{B}_r^{p,q} := \mathrm{B}_r^p \cap \mathrm{B}_r^q$ and $\overline{\mathrm{B}}_r^{p,q} := \overline{\mathrm{B}}_r^p \cap \overline{\mathrm{B}}_r^q$.
	\end{notation}
	
	The following characterization of Lip-norms appears in \cite[Theorem 1.8]{Rie98}, see also \cite[Theorem 6.3]{Pav98} for an even earlier version in the $\mathrm{C}^*$-algebraic context.
	We state it here as in \cite[Proposition 2.5]{Li09}.
	Recall that a subset $S$ of a normed space $(V,\lVert\cdot\rVert)$ is \emph{totally bounded} if, for every $\varepsilon > 0$, there exist $v_1,\dots,v_n \in S$ such that $S \subseteq \bigcup_{i=1}^n \overline{\mathrm{B}}_\varepsilon^{\lVert\cdot\rVert}$.
	
	\begin{proposition}\label{lem:Total-boundedness}
		Let $X$ be an operator system and $L : X \rightarrow [0,\infty]$ a Lipschitz seminorm.
		Then $L$ is a Lip-norm if and only if $X$ has finite radius and the set $\overline{\mathrm{B}}_1^{\lVert\cdot\rVert,L}$ is totally bounded.
	\end{proposition}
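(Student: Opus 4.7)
The plan is to establish both implications of this classical characterization, with the forward direction resting on Arzelà--Ascoli together with \autoref{lem:Kadison-function-representation}, and the reverse direction requiring an equicontinuity argument whose central ingredient is the total-boundedness hypothesis.

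For the forward implication, compactness of $(\mathcal{S}(X), d^L)$ yields a finite diameter $D$. To prove finite radius, take $x \in X$ with $L(x) \leq 1$; the $*$-invariance of $L$ gives $L(\Re(x)), L(\Im(x)) \leq L(x) \leq 1$, and for a self-adjoint element $y$ with $L(y) \leq 1$ the identity $\lVert y - \lambda \mathbf{1}_X \rVert = \sup_\phi |\phi(y) - \lambda|$ (from the first inequality of \autoref{lem:Kadison-function-representation}) is minimized at $\lambda = \tfrac{1}{2}(\sup_\phi \phi(y) + \inf_\phi \phi(y))$, giving $\lVert y \rVert_{\sfrac{X}{\mathbb{C}}} \leq \tfrac{1}{2}(\sup_\phi \phi(y) - \inf_\phi \phi(y)) \leq D/2$. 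Combining the self-adjoint parts yields $\lVert x \rVert_{\sfrac{X}{\mathbb{C}}} \leq D$, hence $r_X \leq D$. For total boundedness, I would consider the evaluation map $\mathrm{ev} : X \to C(\mathcal{S}(X), d^L)$, $x \mapsto (\phi \mapsto \phi(x))$. For $x \in \overline{\mathrm{B}}_1^{\lVert\cdot\rVert, L}$ the image $\mathrm{ev}(x)$ is $1$-Lipschitz (by definition of $d^L$) and of supremum norm at most $1$, so it lies in a totally bounded subset of $C(\mathcal{S}(X))$ by Arzelà--Ascoli. The second inequality of \autoref{lem:Kadison-function-representation}, $\lVert x \rVert \leq 2 \sup_\phi |\phi(x)|$, means that total boundedness of $\mathrm{ev}(\overline{\mathrm{B}}_1^{\lVert\cdot\rVert,L})$ transfers to $\overline{\mathrm{B}}_1^{\lVert\cdot\rVert,L}$ itself at the cost of a factor of $2$.

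For the reverse implication, finite radius gives boundedness of $d^L$: for $L(x) \leq 1$ there exists $\lambda \in \mathbb{C}$ with $\lVert x - \lambda \mathbf{1}_X \rVert \leq r_X$, hence $|\phi(x) - \psi(x)| \leq 2 r_X$ for all states $\phi, \psi$. That $d^L$ is a genuine metric follows because under finite radius $L(x') = 0$ forces $x' \in \mathbb{C} \mathbf{1}_X$, so density of $\Dom(L)$ lets us separate any two distinct states by some $x'$ with positive finite $L$-value. Continuity of the identity $(\mathcal{S}(X), d^L) \to (\mathcal{S}(X), \mathrm{weak}^*)$ is immediate from $|\phi(x) - \psi(x)| \leq d^L(\phi,\psi) L(x)$ for $x \in \Dom(L)$, combined with density of $\Dom(L)$ and uniform boundedness of states. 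The main obstacle is the converse direction: that $\phi_\alpha \to \phi$ weak$^*$ implies $d^L(\phi_\alpha, \phi) \to 0$. Here I would invoke the classical observation that a uniformly bounded pointwise-convergent net of linear functionals converges uniformly on totally bounded sets; applied to the states $(\phi_\alpha)$ and the totally bounded set $\overline{\mathrm{B}}_1^{\lVert\cdot\rVert,L}$, this yields $\sup_{x \in \overline{\mathrm{B}}_1^{\lVert\cdot\rVert, L}} |\phi_\alpha(x) - \phi(x)| \to 0$. To extend the supremum to all of $\{x \in X : L(x) \leq 1\}$, decompose any such $x$ as $x = \lambda \mathbf{1}_X + y$ with $\lVert y \rVert \leq r_X$ and $L(y) \leq 1$; since $\phi_\alpha$ and $\phi$ agree on $\lambda \mathbf{1}_X$ and $y / \max(r_X, 1) \in \overline{\mathrm{B}}_1^{\lVert\cdot\rVert, L}$, the uniform convergence carries over. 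Finally, since $\mathrm{weak}^*$ is compact Hausdorff and a continuous bijection from a compact space to a Hausdorff space is a homeomorphism, $d^L$ metrizes the weak$^*$ topology, completing the proof.
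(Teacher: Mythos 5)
Your argument is correct, and it is essentially the classical proof of Rieffel's criterion: the paper itself does not prove \autoref{lem:Total-boundedness} but quotes it from \cite{Rie98} (Theorem 1.8), \cite{Pav98} and \cite{Li09} (Proposition 2.5), and your write-up reproduces that standard argument, adapted to the complex operator-system setting by using the factor-$2$ bound of \autoref{lem:Kadison-function-representation} to pull total boundedness back from $\mathrm{C}(\mathcal{S}(X),d^L)$ along the evaluation map, and by splitting off the scalar part $x=\lambda\mathbf{1}_X+y$ with $\lVert y\rVert\leq r_X$ to pass from uniform convergence on $\overline{\mathrm{B}}_1^{\lVert\cdot\rVert,L}$ to convergence of $d^L$. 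All the steps check out: the compact-to-Hausdorff continuous bijection argument, the fact that finite radius forces $\ker L=\mathbb{C}\mathbf{1}_X$ (so $d^L$ is a genuine metric), and the uniform convergence of a uniformly bounded pointwise convergent net of functionals on a totally bounded set are all used correctly. One cosmetic point: the identity $\lVert y-\lambda\mathbf{1}_X\rVert=\sup_{\phi}|\phi(y)-\lambda|$ for self-adjoint $y$ and real $\lambda$ is not the first inequality of \autoref{lem:Kadison-function-representation} (which only gives ``$\leq$'' in the wrong direction); it is the equality for self-adjoint elements recorded inside the proof of that lemma (via \cite{KR1}). Alternatively, the cruder two-sided bound of the lemma would still give radius at most $2D$, which is all that is needed, so this does not affect the proof.
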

	
	Note that if $X$ is a finite dimensional operator system, every Lipschitz seminorm $L$ on $X$ with $\ker(L) = \mathbb{C}\mathbf{1}_X$ is a Lip-norm.
	Indeed, the condition on the kernel of $L$ guarantees that $X$ has finite radius and by compactness the set $\overline{\mathrm{B}}_1^{\lVert\cdot\rVert,L}$ is totally bounded.

	\subsection{Gromov--Hausdorff type distances}
	
	Let $(X,L_X)$ and $(Y,L_Y)$ be compact quantum metric spaces.
	Different generalizations of Gromov--Hausdorff distance for compact metric spaces to the quantum setting have been proposed.
	The notion of classical Gromov--Hausdorff distance between the state spaces of compact quantum metric spaces was used e.g.\@ in \cite{CvS21, vS21}.
	
	\begin{definition}
		The \emph{Gromov--Hausdorff distance} of $(X,L_X)$ and $(Y,L_Y)$ is the classical Gromov--Hausdorff distance between their state spaces, i.e.\@
		\begin{equation}\label{eqn:GH-dist}
			\begin{aligned}
				\mathrm{dist}_{\mathrm{GH}}((X,L_X),(Y,L_Y))
				&:= \mathrm{dist}_{\mathrm{GH}}((\mathcal{S}(X),d^{L_X}),(\mathcal{S}(Y),d^{L_Y})) \\
				&\;= \inf\{\mathrm{dist}_\mathrm{H}^\rho(\mathcal{S}(X),\mathcal{S}(Y))\},
			\end{aligned}
		\end{equation} 
		where the infimum runs over all metrics $\rho$ on the disjoint union $\mathcal{S}(X) \sqcup \mathcal{S}(Y)$ which restrict to the respective Monge--Kantorovich metrics on the summands and where $\mathrm{dist}_\mathrm{H}^\rho$ is the usual Hausdorff distance \cite[Definition 7.3.1]{BBI01} on the set of compact subsets of the space $\mathcal{S}(X) \sqcup \mathcal{S}(Y)$ equipped with such a metric.
	\end{definition}
	
	The following notion of quantum Gromov--Hausdorff distance goes back to \cite{Rie04} where it was still phrased in terms of the order-unit space version of compact quantum metric spaces.
	We follow here the treatment in \cite[Section 2]{KK22}.
	For a Lipschitz seminorm $L$ on an operator system $X$ we denote by $L_{\mathrm{sa}}$ the restriction to the self-adjoint part $X_{\mathrm{sa}}$.
	
	\begin{definition}\label{def:Admissible-Lip-norm}
		A Lip-norm $L$ on the direct sum $X \oplus Y$ is called \emph{admissible} if $\Dom(L) = \Dom(L_X) \oplus \Dom(L_Y)$ and if the induced seminorms on $X_\mathrm{sa}$ and $Y_\mathrm{sa}$ under the respective coordinate projections $p_X : \Dom(L)_\mathrm{sa} \rightarrow \Dom(L_X)_\mathrm{sa}$ and $p_Y : \Dom(L)_\mathrm{sa} \rightarrow \Dom(L_Y)_\mathrm{sa}$ coincide with $(L_X)_\mathrm{sa}$ and $(L_Y)_\mathrm{sa}$ respectively.
	\end{definition}
	
	\begin{definition}
		The \emph{quantum Gromov--Hausdorff distance} of $(X,L_X)$ and $(Y,L_Y)$ is given by
		\begin{align}
			\mathrm{dist}_{\mathrm{GH}}^\mathrm{q}((X,L_X),(Y,L_Y))
			:= \inf \mathrm{dist}_\mathrm{H}^{d^L}(\mathcal{S}(X),\mathcal{S}(Y)),
		\end{align}
		where the infimum runs over all admissible Lip-norms $L$ on $X \oplus Y$ with $d^L$ the induced Monge--Kantorovich distance on $\mathcal{S}(X \oplus Y)$ and where $\mathrm{dist}_\mathrm{H}^{d^L}$ is the Hausdorff distance on the set of compact subsets of $\mathcal{S}(X \oplus Y)$.
	\end{definition}
	
	By \cite[Lemma 2.12]{KK22}, the quantum Gromov--Hausdorff distance of the two (operator system) compact quantum metric spaces $(X,L_X)$ and $(Y,L_Y)$ coincides with that of the two (order unit) compact quantum metric spaces $(\Dom(L_X)_\mathrm{sa}, (L_X)_\mathrm{sa})$ and $(\Dom(L_Y)_\mathrm{sa}, (L_Y)_\mathrm{sa})$.
	
	\begin{remark}\label{rmk:Comparison-GH-qGH-distance}
		Note that if $L$ is an admissible Lip-norm on $X \oplus Y$, the metric $d^L$ metrizes the weak* topology on $\mathcal{S}(X \oplus Y) \cong \mathcal{S}(X) \sqcup \mathcal{S}(Y)$.
		Thus $d^L$ is an element of the set of metrics $\rho$ over which the infimum in the definition (\ref{eqn:GH-dist}) of $\mathrm{dist}_\mathrm{GH}$ is taken, so that we always have
		\begin{align}
			\mathrm{dist}_\mathrm{GH} \leq \mathrm{dist}_\mathrm{GH}^\mathrm{q}.
		\end{align}
		However, in general the Gromov--Hausdorff distance $\mathrm{dist}_\mathrm{GH}$ and the quantum Gromov--Hausdorff distance $\mathrm{dist}_\mathrm{GH}^\mathrm{q}$ are inequivalent metrics on the set of compact quantum metric spaces \cite{KK23}.
	\end{remark}
	
	The infimum for the Gromov--Hausdorff distance can equivalently be taken over all isometric embeddings $\iota_{\mathcal{S}(X)} : \mathcal{S}(X) \hookrightarrow \mathcal{T}$ and $\iota_{\mathcal{S}(Y)} : \mathcal{S}(Y) \hookrightarrow \mathcal{T}$ into compact metric spaces $(\mathcal{T},\rho)$ \cite[Definition 7.3.10, Remark 7.3.12]{BBI01}.
	
	Similarly, \cite[Lemma 2.12]{KK22} and \cite[Proposition 4.7]{Li06} imply the following:
	\begin{align}
		&\mathrm{dist}_\mathrm{GH}^\mathrm{q}((X,L_X),(Y,L_Y)) \\
		&= \mathrm{dist}_\mathrm{GH}^\mathrm{q}((\Dom(L_X)_\mathrm{sa},(L_X)_\mathrm{sa}), (\Dom(L_Y)_\mathrm{sa},(L_Y)_\mathrm{sa})) \\
		&= \inf \mathrm{dist}_\mathrm{H}^V \left(\overline{\mathrm{B}}_1^{(L_X)_\mathrm{sa}}, \overline{\mathrm{B}}_1^{(L_Y)_\mathrm{sa}}\right),
	\end{align}
	where the infimum is taken over all order unit spaces $V$ which contain $\Dom(L_X)_\mathrm{sa}$ and $\Dom(L_Y)_\mathrm{sa}$ as order-unit subspaces.
	Equivalently, one can take the infimum over all normed spaces $V$ which contain $\Dom(L_X)_\mathrm{sa}$ and $\Dom(L_Y)_\mathrm{sa}$ as subspaces such that their order units coincide.
	
	The fact that it is enough to consider embeddings into order unit spaces rather than operator systems hints at the defect of the quantum Gromov--Hausdorff distance not to capture the complete order structure of the involved operator systems.
	To overcome this, the notions of \emph{complete} \cite{Ker03} and \emph{operator Gromov--Hausdorff distance} \cite{Li06} were introduced.
	For both of these notions we follow the unified treatment in \cite{KL07}.
	
	\begin{definition}\label{def:Complete-GH-distance}
		The \emph{complete Gromov--Hausdorff distance} is given by
		\begin{align}
			\mathrm{dist}^\mathrm{s}((X,L_X),(Y,L_Y))
			:= \inf \sup_{n \in \mathbb{N}} \mathrm{dist}_\mathrm{H}^{d^{L,n}}(\mathcal{S}_n(X),\mathcal{S}_n(Y)),
		\end{align}
		where the infimum is taken over all admissible Lip-norms $L$ on $X \oplus Y$.
	\end{definition}
	
	\begin{remark}
		There is a slight subtlety about the notion of admissible Lip-norm.
		Namely, in the definition of complete Gromov--Hausdorff distance given in \cite{KL07} only admissible Lip-norms on $(X \oplus Y)_\mathrm{sa}$ (with the analogous notions of Lip-norm and admissibility for order unit spaces) are taken into account, which are furthermore required to be closed.
		
		First, we point out that for the definition of quantum and complete Gromov--Hausdorff distance it is equivalent to consider admissible Lip-norms on $X \oplus Y$ (in the operator system sense of \autoref{def:Admissible-Lip-norm}) and admissible Lip-norms on $(X \oplus Y)_\mathrm{sa}$ (in the order unit sense), as can be seen from the proof of \cite[Lemma 2.12]{KK22}.
		Indeed, any admissible Lip-norm $L^0$ in the order unit sense induces a Lip-norm $L^0_\mathrm{os}$ which is admissible in the operator system sense and, conversely, any admissible Lip-norm $L$ in the operator system sense restricts to a seminorm $L_\mathrm{sa} := L|_{(X \oplus Y)_\mathrm{sa}}$ which is an admissible Lip-norm in the order unit sense.
		
		Second, the question whether closedness should be required for admissible Lip-norms is discussed in \cite[p.\@ 319]{Li06}.
		Recall that a Lip-norm $L$ on an order unit space $V$ is called \emph{closed} if it coincides with its closure given by $\overline{L}(v) := \inf \{r \geq 0 \mid v \in r \cdot \mathrm{cl}(\overline{\mathrm{B}}_1^L)\}$, where $\mathrm{cl}(\overline{\mathrm{B}}_1^L)$ denotes the closure of $\overline{\mathrm{B}}_1^L$ in the completion of $V$.
		If $L$ is lower semicontinuous, it is not hard to see that the Monge--Kantorovich distances induced by $L$ and $\overline{L}$ coincide \cite[Proposition 4.4]{Rie99}.
		Moreover, by \cite[Theorem 4.2]{Rie99}, there is a largest lower semicontinuous Lip-norm $L_{d^L}$ which is bounded above by $L$ (namely, the Lipschitz constant on $\mathrm{C}(\mathcal{S}(V))$ for the distance $d^L$), and such that the Monge--Kantorovich distances induced by $L_{d^L}$ and $L$ on $\mathcal{S}(V)$ coincide.
		In particular, this shows that for quantum and complete Gromov--Hausdorff distance it is equivalent require closedness of admissible Lip-norms or not.
	\end{remark}
	
	\begin{remark}\label{rmk:Comparison-complete-qGH-distance}
		In \cite[Definition 3.2]{Ker03}, the $n$-distance is defined as
		\begin{align}
			\mathrm{dist}_n^\mathrm{s} ((X,L_X),(Y,L_Y))
			:= \inf \mathrm{dist}_\mathrm{H}^{d^{L,n}}(\mathcal{S}_n(X),\mathcal{S}_n(Y)),
		\end{align}
		where, again, the infimum is taken over all admissible Lip-norms $L$ on $X \oplus Y$.
		Moreover, the author points out that 
		\begin{align}
			\mathrm{dist}_m^\mathrm{s} 
			\leq \mathrm{dist}_n^\mathrm{s}
			\leq \mathrm{dist}^\mathrm{s},
		\end{align}
		for all $m, n \in \mathbb{N}$ with $m \leq n$.
		In particular it follows that
		\begin{align}
			\mathrm{dist}_\mathrm{GH}^\mathrm{q}
			= \mathrm{dist}_1^\mathrm{s}
			\leq \mathrm{dist}^\mathrm{s}.
		\end{align}
	\end{remark}
	
	The following distance is an operator system analogue of the order-unit space version given in \cite{Li06}.
	
	\begin{definition}[\cite{KL07}]
		The \emph{operator Gromov--Hausdorff distance} is given by
		\begin{align}
			\mathrm{dist}^\mathrm{op}((X,L_X),(Y,L_Y))
			:= \inf \mathrm{dist}_\mathrm{H}(\iota_X(\overline{\mathrm{B}}_1^{L_X}), \iota_Y(\overline{\mathrm{B}}_1^{L_Y})),
		\end{align}
		where the infimum is taken over all unital complete order embeddings $\iota_X : X \hookrightarrow Z$, $\iota_Y : Y \hookrightarrow Z$ into an operator system $Z$.
	\end{definition}
	
	It turns out that the complete and operator Gromov--Hausdorff distances are equal.
	
	\begin{proposition}[{\cite[Theorem 3.7]{KL07}}]\label{prop:Equality-complete-operator-GH-distance}
		The following holds:
		\begin{align}
			\mathrm{dist}^\mathrm{s}
			= \mathrm{dist}^\mathrm{op}
		\end{align}
	\end{proposition}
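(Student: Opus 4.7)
The plan is to prove both inequalities $\mathrm{dist}^{\mathrm{s}} \leq \mathrm{dist}^{\mathrm{op}}$ and $\mathrm{dist}^{\mathrm{op}} \leq \mathrm{dist}^{\mathrm{s}}$.

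For the first direction, I would start from unital complete order embeddings $\iota_X : X \hookrightarrow Z$ and $\iota_Y : Y \hookrightarrow Z$ realizing $\mathrm{dist}_{\mathrm{H}}(\iota_X(\overline{\mathrm{B}}_1^{L_X}), \iota_Y(\overline{\mathrm{B}}_1^{L_Y})) < r$. Define a seminorm on $X \oplus Y$ by
\[
L(x,y) := \max\!\left\{L_X(x),\; L_Y(y),\; \tfrac{1}{r}\lVert \iota_X(x) - \iota_Y(y)\rVert_Z\right\}.
\]
The $^*$-invariance and vanishing on scalars are immediate, and $L$ coincides with $L_X$ (respectively $L_Y$) under the coordinate projections, so it is admissible in the sense of \autoref{def:Admissible-Lip-norm}; total boundedness of $\overline{\mathrm{B}}_1^{\lVert\cdot\rVert,L}$ follows from \autoref{lem:Total-boundedness} applied to $L_X$ and $L_Y$ together with the linear bound provided by the norm term. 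To bound $\mathrm{dist}_{\mathrm{H}}^{d^{L,n}}(\mathcal{S}_n(X),\mathcal{S}_n(Y))$ uniformly in $n$, given any $\phi \in \mathcal{S}_n(X)$ I would transport along $\iota_X^{-1}$ to $\iota_X(X)$, apply Arveson's extension theorem to obtain a ucp extension to $Z$, and restrict along $\iota_Y$ to produce $\psi \in \mathcal{S}_n(Y)$. For $(x,y)$ with $L(x,y) \leq 1$ one has $\lVert \iota_X(x)-\iota_Y(y)\rVert_Z \leq r$, so $\lVert \phi(x) - \psi(y)\rVert \leq r$, which delivers $d^{L,n}(p_X^*\phi,\, p_Y^*\psi) \leq r$.

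For the reverse direction, given an admissible Lip-norm $L$ on $X \oplus Y$ with $\sup_n \mathrm{dist}_{\mathrm{H}}^{d^{L,n}}(\mathcal{S}_n(X),\mathcal{S}_n(Y)) < r$, I would construct a common ambient operator system $Z$ into which both $X$ and $Y$ embed uci. The natural candidate is the operator system dual to the matricial state space of $X \oplus Y$: via the operator-system version of the Kadison function representation, $X \oplus Y$ embeds unitally-completely-order-isomorphically into an operator system of continuous ucp-map-valued functions on its matrix state spaces. Inside this ambient operator system $Z$, the images of $X$ and $Y$ share a common unit (the original unit of $X \oplus Y$ is preserved by the embedding when one amalgamates at the unit), and the operator norm on $Z$ is controlled by the supremum over matrix states. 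A Hahn--Banach style argument then converts the matrix-state-space Hausdorff bound into the required estimate $\mathrm{dist}_{\mathrm{H}}(\iota_X(\overline{\mathrm{B}}_1^{L_X}),\iota_Y(\overline{\mathrm{B}}_1^{L_Y})) \leq r$, since for $x \in \overline{\mathrm{B}}_1^{L_X}$ the convexity of $\overline{\mathrm{B}}_1^{L_Y}$ and the matrix-state separation of points allows one to locate a nearby $y \in \overline{\mathrm{B}}_1^{L_Y}$.

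The main obstacle is the second direction, and specifically the amalgamation at the unit: the naive direct sum $X \oplus Y$ does not provide unital embeddings of its summands, so one must pass through the matricial-state-space duality to force the units of $X$ and $Y$ to be identified within a single operator system, while simultaneously translating the purely combinatorial/metric bound on matrix state spaces into a norm estimate in $Z$. Once this duality is set up carefully, the core norm estimate follows from the fact, recorded after \autoref{lem:Total-boundedness} and in \cite[Proposition 2.12]{Ker03}, that the matrix Monge--Kantorovich distances jointly determine the operator system structure on $L$-Lipschitz elements.
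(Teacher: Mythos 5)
First, note that the paper does not prove this proposition at all: it is imported verbatim from \cite[Theorem 3.7]{KL07}, so your argument has to be judged on its own rather than against a proof in the text. Your first direction, $\mathrm{dist}^{\mathrm{s}} \leq \mathrm{dist}^{\mathrm{op}}$, is the standard bridge argument and is sound in outline: the seminorm $L(x,y)=\max\{L_X(x),L_Y(y),\tfrac{1}{r}\lVert\iota_X(x)-\iota_Y(y)\rVert_Z\}$ is admissible precisely because the Hausdorff bound lets you match Lip-balls (taking real parts to stay in the self-adjoint parts), finite radius comes from the cross term, total boundedness from \autoref{lem:Total-boundedness}, and matrix states are transported by realizing $Z$ concretely in some $\mathcal{B}(K)$, extending $\phi\circ\iota_X^{-1}$ to $\mathcal{B}(K)$ by Arveson's theorem and restricting along $\iota_Y$; complete contractivity of the extension then gives $d^{L,n}(p_X^*\phi,p_Y^*\psi)\leq r$ uniformly in $n$.

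The genuine gap is in the direction $\mathrm{dist}^{\mathrm{op}} \leq \mathrm{dist}^{\mathrm{s}}$, and it sits exactly where you locate ``the main obstacle''. The functional (Kadison/matrix-affine) representation of $X\oplus Y$ on its matrix state spaces returns a unital complete order copy of $X\oplus Y$; inside it the summands sit as $X\oplus 0$ and $0\oplus Y$, whose units are $(\mathbf{1}_X,0)$ and $(0,\mathbf{1}_Y)$ rather than the unit of the ambient system. So this $Z$ does not furnish the unital complete order embeddings $\iota_X,\iota_Y$ that $\mathrm{dist}^{\mathrm{op}}$ requires, and the phrase ``the images of $X$ and $Y$ share a common unit \dots when one amalgamates at the unit'' presupposes precisely the construction the proof must supply. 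What is actually needed is to build, from the admissible Lip-norm $L$, an operator system of matrix-valued functions on $\coprod_n(\mathcal{S}_n(X)\sqcup\mathcal{S}_n(Y))$ (compatible with the matrix-affine, self-adjoint and positive structure) together with maps sending $x$ to evaluation on $\mathcal{S}_n(X)$ and to a controlled Lipschitz-type extension over $\mathcal{S}_n(Y)$, and then to verify that these maps are unital complete order isomorphisms onto their images (complete positivity of the inverses is the delicate point) and that for every $x$ with $L_X(x)\leq 1$ there exists $y$ with $L_Y(y)\leq 1$ and $\lVert\iota_X(x)-\iota_Y(y)\rVert_Z\leq r$. Your appeal to ``a Hahn--Banach style argument'' and to convexity plus matrix-state separation asserts this existence statement rather than proving it, and \cite[Proposition 2.12]{Ker03} only says that $d^{L,n}$ metrizes the matrix state spaces; it does not produce the nearby element $y$. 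This amalgamation-plus-selection step is the real content of \cite[Theorem 3.7]{KL07} (already nontrivial at the order-unit level, cf.\@ \cite[Section 4]{Li06}), and as written your sketch does not supply it.
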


	\subsection{Criterion for the control of complete Gromov--Hausdorff distance}
	
	We record the following estimate of Hausdorff distance. 
	
	\begin{lemma}\label{lem:Hausdorff-distance-estimate}
		Let $M, N$ be compact subsets of a metric space $(Z,d)$ and let $f : M \rightarrow N$, $g : N \rightarrow M$ be set maps.
		Then the following holds:
		\begin{align}
			\mathrm{dist}_\mathrm{H}^d(M,N) 
			\leq \max \left\{ \sup_{m \in M} d(m,f(m)), \sup_{n \in N} d(g(n),n) \right\}
		\end{align}
	\end{lemma}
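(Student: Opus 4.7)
The plan is to unfold the definition of Hausdorff distance and use the hypothesized set maps $f$ and $g$ to bound the two one-sided suprema that appear in it. Recall that for compact subsets $M, N$ of $(Z,d)$, the Hausdorff distance can be written as
\begin{align}
\mathrm{dist}_\mathrm{H}^d(M,N) = \max\left\{ \sup_{m \in M} \inf_{n \in N} d(m,n),\; \sup_{n \in N} \inf_{m \in M} d(m,n) \right\}.
\end{align}
So the strategy is simply to bound each of the two inner infima using a particular choice of point in the other set.

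For the first term, fix $m \in M$. Since $f(m) \in N$, we have $\inf_{n \in N} d(m,n) \leq d(m, f(m))$. Taking the supremum over $m \in M$ gives $\sup_{m \in M} \inf_{n \in N} d(m,n) \leq \sup_{m \in M} d(m, f(m))$. By the symmetric argument, using that $g(n) \in M$ for each $n \in N$, we obtain $\sup_{n \in N} \inf_{m \in M} d(m,n) \leq \sup_{n \in N} d(g(n), n)$.

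Taking the maximum of the two bounds yields the claimed inequality. There is no real obstacle here; the only thing to note is that $f$ and $g$ are arbitrary set maps (no measurability or continuity is needed) because the argument is entirely pointwise, and the compactness of $M, N$ is only used implicitly to guarantee that the Hausdorff distance is finite and well-defined.
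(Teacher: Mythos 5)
Your proof is correct and follows essentially the same route as the paper: the paper unfolds the neighborhood-based definition of Hausdorff distance into the same two sup--inf conditions and bounds each inner infimum by the specific choices $f(m) \in N$ and $g(n) \in M$, exactly as you do. The only cosmetic difference is that you start from the max-of-sup-inf formulation while the paper phrases it via the infimum over radii $r$ with $M \subseteq U_r^d(N)$ and $N \subseteq U_r^d(M)$.
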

	
	\begin{proof}
		This is immediate from the definition of Hausdorff distance \cite[Definition 7.3.1]{BBI01}.
		For a subset $S \subseteq Z$ and a positive real number $r > 0$, set $U_r^d(S) := \{z \in Z \mid \inf_{s \in S} d(z,s) < r\}$.
		\begin{align}
			&\mathrm{dist}_\mathrm{H}^d(M,N) \\
			&= \inf \{r > 0 \mid M \subseteq U_r^d(N) \text{ and } N \subseteq U_r^d(M)\} \\
			&= \inf \left\{ r > 0 \mid \left(\inf_{n \in N} d(m,n) < r, \forall m \in M\right) \text{ and } \left(\inf_{m \in M} d(m,n) < r, \forall n \in N\right) \right\}  \\
			&\leq \inf \left\{ r > 0 \mid \left(d(m,f(m)) < r, \forall m \in M\right) \text{ and } \left(d(g(n),n) < r, \forall n \in N\right) \right\}  \\
			&= \max \left\{ \sup_{m \in M} d(m,f(m)), \sup_{n \in N} d(g(n),n) \right\}
		\end{align}
	\end{proof}
	
	The following sufficient condition for estimating complete Gromov--Hausdorff distance is analogous to the criteria \cite[Proposition 4]{vS21v1} and \cite[Proposition 2.14]{KK22}.
	They on the methods already used in \cite{Rie98,Rie99} and formalized in terms of the notion of \emph{bridge} in \cite{Rie04}.
	
	\begin{proposition}\label{prop:Criterion-qGH-comparison}
		Let $(X,L_X)$ and $(Y,L_Y)$ be compact quantum metric spaces and let $\varepsilon_X, \varepsilon_Y, C_\Phi, C_\Psi > 0$ be positive real numbers.
		Suppose that there are morphisms $\Phi : X \rightarrow Y$, $\Psi : Y \rightarrow X$ of compact quantum metric spaces with $L_Y(\Phi(x)) \leq C_\Phi L_X(x)$ and $L_X(\Psi(y)) \leq C_\Psi L_Y(y)$, for all $x \in X$, $y \in Y$.
		Assume furthermore that
		\begin{align}
			\lVert\Psi\Phi(x)-x\rVert \leq \varepsilon_X L_X(x) \quad \text{and} \quad
			\lVert\Phi\Psi(y)-y\rVert \leq \varepsilon_Y L_Y(y).
		\end{align}
		Then the following estimate holds:
		\begin{align}
			\mathrm{dist}^\mathrm{s}(X,Y) 
			\leq \max\left\{\mathrm{diam}(X,L_X) \left| 1-\frac{1}{C_\Phi}\right| + \frac{\varepsilon_X}{C_\Phi}, \mathrm{diam}(Y,L_Y) \left| 1-\frac{1}{C_\Psi}\right| + \frac{\varepsilon_Y}{C_\Psi}\right\}
		\end{align}
	\end{proposition}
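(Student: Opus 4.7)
The plan is to bound the complete Gromov--Hausdorff distance from above by constructing an admissible Lip-norm $L$ on the direct sum operator system $X \oplus Y$ and then estimating, uniformly in $n \in \mathbb{N}$, the Hausdorff distance between $\mathcal{S}_n(X)$ and $\mathcal{S}_n(Y)$, viewed as subsets of $\mathcal{S}_n(X \oplus Y)$ via the coordinate projections, under the induced Monge--Kantorovich metric $d^{L,n}$. By \autoref{def:Complete-GH-distance}, taking the supremum over $n$ and then the infimum over admissible Lip-norms will produce the desired estimate on $\mathrm{dist}^{\mathrm{s}}(X,Y)$.

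For the Lip-norm, I would try a bridge of the form
\begin{align*}
L(x,y) := \max\left\{L_X(x),\ L_Y(y),\ \frac{\|\Phi(x) - y\|}{\lambda_1},\ \frac{\|x - \Psi(y)\|}{\lambda_2}\right\},
\end{align*}
where $\lambda_1, \lambda_2 > 0$ are parameters to be tuned at the end. That $L$ is a Lip-norm follows from \autoref{lem:Total-boundedness}, since $\overline{\mathrm{B}}_1^{\|\cdot\|,L} \subseteq \overline{\mathrm{B}}_1^{\|\cdot\|,L_X} \oplus \overline{\mathrm{B}}_1^{\|\cdot\|,L_Y}$ is totally bounded by assumption and the radius of $X \oplus Y$ is finite. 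The delicate step is admissibility in the sense of \autoref{def:Admissible-Lip-norm}: the estimate $L_Y(\Phi(x)) \leq C_\Phi L_X(x)$ forces $L(x,\Phi(x)) \leq C_\Phi L_X(x)$, which exceeds $L_X(x)$ when $C_\Phi > 1$. One compensates by replacing the candidate $y = \Phi(x)$ in the infimum computation by a rescaled version $y = \Phi(x)/C_\Phi$ modulo a scalar shift $\mu\mathbf{1}_Y$ chosen to absorb the discrepancy; the scalar correction, controlled by $\mathrm{diam}(X,L_X)$, is the mechanism by which the term $\mathrm{diam}(X,L_X)\,|1 - 1/C_\Phi|$ enters the bound, and symmetrically for $Y$.

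Once $L$ is in place, for each $n \in \mathbb{N}$ I would define pullback maps
\begin{align*}
f_n: \mathcal{S}_n(X) \to \mathcal{S}_n(Y),\quad f_n(\phi) := \phi \circ \Psi,
\qquad
g_n: \mathcal{S}_n(Y) \to \mathcal{S}_n(X),\quad g_n(\psi) := \psi \circ \Phi,
\end{align*}
which are ucp as compositions of ucp maps. The core estimate, for $\phi \in \mathcal{S}_n(X)$ viewed as a state on $X \oplus Y$ via $p_X$, is
\begin{align*}
d^{L,n}(\phi, f_n(\phi)) = \sup_{L(x,y)\leq 1}\|\phi(x - \Psi(y))\|_{\mathrm{M}_n(\mathbb{C})} \leq \sup_{L(x,y)\leq 1}\|x - \Psi(y)\|,
\end{align*}
since $\phi$ is completely contractive. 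The $\lambda_2$-bridge term bounds the right-hand side, and combining with $\|\Psi\Phi(x) - x\| \leq \varepsilon_X L_X(x)$ and the admissibility adjustment yields the summand $\varepsilon_X/C_\Phi$ after tuning. A symmetric computation for $d^{L,n}(g_n(\psi),\psi)$, using $\|\Phi\Psi(y) - y\| \leq \varepsilon_Y L_Y(y)$ and the $\lambda_1$-bridge term, gives the summand $\varepsilon_Y/C_\Psi$. \autoref{lem:Hausdorff-distance-estimate} then combines these into the Hausdorff distance bound, uniformly in $n$.

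The main obstacle is carrying through the admissibility argument in the general case $C_\Phi, C_\Psi \neq 1$: without the rescaling/scalar-shift trick the naive bridge Lip-norm fails admissibility, and one must carefully track how the scalar correction propagates through the Hausdorff estimate so that the diameter factors $|1 - 1/C_\Phi|\,\mathrm{diam}(X,L_X)$ and $|1 - 1/C_\Psi|\,\mathrm{diam}(Y,L_Y)$ come out with the correct signs and groupings. By contrast, the upgrade from the $1$-state case treated in \cite{KK22} to the matrix level is essentially automatic, since the estimates above rely only on the inequality $\|\phi(a)\|_{\mathrm{M}_n(\mathbb{C})} \leq \|a\|$ valid for every ucp $\phi \in \mathcal{S}_n(X)$, independently of $n$.
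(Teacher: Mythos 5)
Your proposal is correct and is essentially the paper's own proof: the paper uses exactly this bridge seminorm (with both bridge weights set equal to the target bound $r$), delegates admissibility to the proof of \cite[Proposition 2.14]{KK22} --- which is precisely the rescaling/scalar-shift argument you sketch and is where the diameter and $\varepsilon$ terms enter --- and then bounds $d^{L,n}(\phi,\Psi^*\phi)$ and $d^{L,n}(\Phi^*\psi,\psi)$ by $r$ using complete contractivity of matrix states, concluding with \autoref{lem:Hausdorff-distance-estimate} uniformly in $n$. So the approach and even the key estimates coincide with the paper's.
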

	
	\begin{proof}
		We set $r := \max\left\{\mathrm{diam}(X,L_X) \left| 1-\frac{1}{C_\Phi}\right| + \frac{\varepsilon_X}{C_\Phi}, \mathrm{diam}(Y,L_Y) \left| 1-\frac{1}{C_\Psi}\right| + \frac{\varepsilon_Y}{C_\Psi}\right\}$ and define a seminorm $L$ on $X \oplus Y$ by 
		\begin{align}
			L(x,y) := \max \left\{ L_X(x), L_Y(y), \frac{1}{r} \lVert y - \Phi(x) \rVert, \frac{1}{r} \lVert x - \Psi(y)\rVert \right\}.
		\end{align}
		It is shown in the proof of \cite[Proposition 2.14]{KK22} that $L$ is an admissible Lip-norm.
		
		Observe that, for every positive integer $n \in \mathbb{N}$ and matrix state $\phi \in \mathcal{S}_n(X)$, we have
		\begin{align}
			d^{L,n}(\phi, \Psi^*\phi)
			&= \sup_{(x,y) \in X \oplus Y \setminus \mathbb{C}\mathbf{1}_{X \oplus Y}} \frac{\lVert\iota_{\mathcal{S}_n(X)}(\phi)(x,y) - \iota_{\mathcal{S}_n(Y)}(\Psi^*\phi)(x,y)\rVert}{L(x,y)} \\
			&= \sup_{(x,y) \in X \oplus Y \setminus \mathbb{C}\mathbf{1}_{X \oplus Y}} \frac{\lVert\phi(x) - \phi(\Psi(y))\rVert}{\max \left\{ L_X(x), L_Y(y), \frac{1}{r} \lVert y - \Phi(x)\rVert, \frac{1}{r} \lVert x - \Psi(y)\rVert \right\}} \\
			&\leq r,
		\end{align}
		where $\iota_{\mathcal{S}_n(X)} : \mathcal{S}_n(X) \rightarrow \mathcal{S}_n(X) \sqcup \mathcal{S}_n(Y)$, $\iota_{\mathcal{S}_n(Y)} : \mathcal{S}_n(Y) \rightarrow \mathcal{S}_n(X) \sqcup \mathcal{S}_n(Y)$ are the respective inclusion maps of matrix state spaces into the disjoint union of matrix state spaces.
		Similarly, for every positive integer $n \in \mathbb{N}$ and matrix state $\psi \in \mathcal{S}_n(Y)$, we have $d^{L,n}(\Phi^*\psi,\psi) \leq r$.
		
		Now, using the Lipschitz maps $\Psi^* : \mathcal{S}_n(X) \rightarrow \mathcal{S}_n(Y)$, $\Phi^* : \mathcal{S}_n(Y) \rightarrow \mathcal{S}_n(X)$ on the subsets $\mathcal{S}_n(X)$, $\mathcal{S}_n(Y)$ of the metric space $(\mathcal{S}_n(X) \sqcup \mathcal{S}_n(Y), d^{L,n})$, we obtain from \autoref{lem:Hausdorff-distance-estimate} that
		\begin{align}
			\mathrm{dist}_\mathrm{H}^{d^{L,n}}(\mathcal{S}_n(X),\mathcal{S}_n(Y))
			\leq \max \left\{ \sup_{\phi \in \mathcal{S}_n(X)} d^{L,n}(\phi, \Psi^*\phi), \sup_{\psi \in \mathcal{S}_n(Y)} d^{L,n}(\Phi^*\psi, \psi) \right\}.
		\end{align}
		Together with the previous observation that each of these two suprema is bounded by $r$, the claim follows:
		\begin{align}
			\mathrm{dist}^\mathrm{s}(X,Y)
			\leq \sup_{n \in \mathbb{N}} \mathrm{dist}_\mathrm{H}^{d^{L,n}}(\mathcal{S}_n(X),\mathcal{S}_n(Y))
			\leq r
		\end{align}
	\end{proof} 
	
	From \autoref{rmk:Comparison-GH-qGH-distance}, \autoref{rmk:Comparison-complete-qGH-distance} and \autoref{prop:Equality-complete-operator-GH-distance} the following corollary is immediate.
	
	\begin{corollary}\label{cor:Complete-GH-distance-bounds-all-others}
		Under the hypothesis of \autoref{prop:Criterion-qGH-comparison}, the distances $\mathrm{dist}_\mathrm{GH}$, $\mathrm{dist}_\mathrm{GH}^\mathrm{q}$, $\mathrm{dist}_n^\mathrm{s}$, for all $n \in \mathbb{N}$, $\mathrm{dist}^\mathrm{s}$ and $\mathrm{dist}^\mathrm{op}$ are all bounded above by the number $r$ from (the proof of) \autoref{prop:Criterion-qGH-comparison}.
	\end{corollary}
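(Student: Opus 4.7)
The plan is to assemble the stated inequality from the cited results, since \autoref{prop:Criterion-qGH-comparison} already does the real work by producing the explicit admissible Lip-norm $L$ on $X \oplus Y$ and bounding each $\mathrm{dist}_\mathrm{H}^{d^{L,n}}(\mathcal{S}_n(X),\mathcal{S}_n(Y))$ by $r$. Given this, the complete Gromov--Hausdorff distance satisfies $\mathrm{dist}^\mathrm{s}(X,Y) \leq r$ directly from \autoref{prop:Criterion-qGH-comparison} itself.

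First, I would use \autoref{prop:Equality-complete-operator-GH-distance} to transfer the bound on $\mathrm{dist}^\mathrm{s}$ to the operator Gromov--Hausdorff distance, namely $\mathrm{dist}^\mathrm{op}(X,Y) = \mathrm{dist}^\mathrm{s}(X,Y) \leq r$. Next, I would invoke the monotonicity inequalities recalled in \autoref{rmk:Comparison-complete-qGH-distance}, i.e.\ the chain
\begin{align}
    \mathrm{dist}_\mathrm{GH}^\mathrm{q} = \mathrm{dist}_1^\mathrm{s} \leq \mathrm{dist}_n^\mathrm{s} \leq \mathrm{dist}^\mathrm{s},
\end{align}
valid for every $n \in \mathbb{N}$. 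Together with $\mathrm{dist}^\mathrm{s}(X,Y) \leq r$ this yields the bound $\mathrm{dist}_n^\mathrm{s}(X,Y) \leq r$ for each $n$, and in particular $\mathrm{dist}_\mathrm{GH}^\mathrm{q}(X,Y) \leq r$.

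Finally, the comparison in \autoref{rmk:Comparison-GH-qGH-distance}, which records $\mathrm{dist}_\mathrm{GH} \leq \mathrm{dist}_\mathrm{GH}^\mathrm{q}$ by observing that any admissible Lip-norm on $X \oplus Y$ gives a distance on $\mathcal{S}(X) \sqcup \mathcal{S}(Y)$ in the infimum defining the classical Gromov--Hausdorff distance, upgrades the previous bound to $\mathrm{dist}_\mathrm{GH}(X,Y) \leq r$. There is no genuine obstacle here: the corollary is purely a bookkeeping consequence of \autoref{prop:Criterion-qGH-comparison} combined with the previously established comparison inequalities, and the only care required is to cite the right places rather than to reprove any inequality.
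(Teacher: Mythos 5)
Your proposal is correct and follows exactly the route the paper intends: the bound $\mathrm{dist}^\mathrm{s}(X,Y) \leq r$ from \autoref{prop:Criterion-qGH-comparison}, transferred to $\mathrm{dist}^\mathrm{op}$ via \autoref{prop:Equality-complete-operator-GH-distance}, and propagated downward through the inequalities $\mathrm{dist}_\mathrm{GH} \leq \mathrm{dist}_\mathrm{GH}^\mathrm{q} = \mathrm{dist}_1^\mathrm{s} \leq \mathrm{dist}_n^\mathrm{s} \leq \mathrm{dist}^\mathrm{s}$ recorded in \autoref{rmk:Comparison-GH-qGH-distance} and \autoref{rmk:Comparison-complete-qGH-distance}. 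The paper states this as immediate from the same three references, so your write-up is simply a spelled-out version of the same argument.
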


	\section{Invariant Lip-norms}\label{sec:Invariant-Lip-norms}
	
	Throughout this section, let $\mathbb{G}$ be a compact quantum group with reduced function algebra $A := \mathrm{C}_\mathrm{r}(\mathbb{G})$.
	Denote the comultiplication on $A$ by $\Delta$ and fix a right coaction $\alpha : X \rightarrow X \otimes A$.
	We use Sweedler notation throughout.
	Much of the terminology and results presented in this section are due to \cite{Li09} in the setting of coactions on $\mathrm{C}^*$-algebras, whereas here we consider coactions on operator systems.
	We point out that in loc.cit.\@ a right coaction is considered as a \emph{left $\mathbb{G}$-action}, so the reader has to make the according adjustments in terminology when referring back.
	
	\begin{definition}
		We say that a seminorm  $L_X : X \rightarrow [0,\infty]$ is (right) \emph{invariant} for the right coaction $\alpha$ if
		\begin{align}
			L_X(x_{(0)}\mu(x_{(1)})) \leq L_X(x), 
		\end{align}
		for all elements $x \in X$ and states $\mu \in \mathcal{S}(A)$.	
		(Left) \emph{invariance} for a left coaction is defined analogously.
		
		Similarly, a seminorm $L_A : A \rightarrow [0,\infty]$ is called \emph{right} (respectively \emph{left}) \emph{invariant} if it is invariant for the right (respectively left) coaction $\Delta$.
		The seminorm $L_A$ is called \emph{bi-invariant} if it is both right and left invariant.
	\end{definition} 
	
	\begin{example}
		For a compact group $G$ with a left invariant metric $d$, i.e.\@ $d(gh,g'h) = d(g,g')$, for all elements $g,g',h \in G$, the Lipschitz constant $\mathrm{Lip}$ is a right invariant Lip-norm on the $\mathrm{C}^*$-algebra of continuous functions on the group $G$ (with domain the Lipschitz functions on $G$). 
		See \cite{GEvS23}.
		
		Conversely, assume that $L$ is a Lip-norm on $\mathrm{C}(G)$, which is invariant for the right coaction $\mathrm{C}(G) \ni f \mapsto \big( (g,h) \mapsto f(gh) =: \rho_h(f)(g) \big) \in \mathrm{C}(G \times G) \cong \mathrm{C}(G) \otimes \mathrm{C}(G)$.
		Then the induced Monge--Kantorovich distance $d^L$ is left invariant for the action of $G$ on the state space $\mathcal{S}(\mathrm{C}(G))$ given by pullback of $\rho$, i.e.\@ $\rho_h^*\mu(f) := \mu(\rho_h(f))$.
		Indeed, by right invariance of $L$, for all elements $g \in G$, it holds that $L(f) \leq 1$ if and only if $L(\rho_{g^{-1}}(f)) \leq 1$.
		Therefore, $d^L(\rho_g^*\mu, \rho_g^*\nu)
		= \sup_{L(f) \leq 1} |\rho_g^*\mu(f) - \rho_g^*\nu(f)|
		= \sup_{L(\rho_{g^{-1}}(f)) \leq 1} |\mu(f) - \nu(f)|
		= \sup_{L(f) \leq 1} |\mu(f) - \nu(f)|
		= d^L(\mu,\nu)$.
	\end{example}
	
	\begin{definition}
		A Lipschitz seminorm $L_X$ on $X$ is called \emph{regular} if $L_X$ is finite on the dense operator subsystem $\mathcal{X} := \bigoplus_{\gamma \in \widehat{\mathbb{G}}} X^\gamma \subseteq X$.
	\end{definition}
	
	The following proposition is the main result of \cite{Li09}, where it is treated for coactions on unital $\mathrm{C}^*$-algebras \cite[Theorem 1.4]{Li09}.
	See \cite[Section 2.5]{Sai09} for an order unit space version.
	All arguments adapt to operator systems.
	We find it convenient to split the statements into the first three simple observations and the last main result.
	
	\begin{proposition}\label{prop:Li-Thm-1.4}
		Assume that the function algebra $A$ is equipped with a seminorm $L_A$.
		For all elements $x \in X$, set
		\begin{align}\label{eqn:Induced-Lip-norm}
			L_X^\alpha(x) 
			:= \sup_{\phi \in \mathcal{S}(X)} L_A(\phi(x_{(0)})x_{(1)}).
		\end{align}
		
		The following properties hold:
		\begin{enumerate}
			\item The function $L_X^\alpha : X \rightarrow [0,\infty]$ is a seminorm on $X$.
			\item If $L_A$ is a regular Lipschitz seminorm, so is the induced  seminorm $L_X^\alpha$.
			\item If the seminorm $L_A$ is right invariant, the induced seminorm $L_X^\alpha$ is invariant for the right coaction $\alpha$.
			\item Assume that the compact quantum group $\mathbb{G}$ is coamenable.
			If the seminorm $L_A$ is a regular Lip-norm and the coaction $\alpha$ is ergodic, the induced seminorm $L_X^\alpha$ is a Lip-norm.
		\end{enumerate}
	\end{proposition}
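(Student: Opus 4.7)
The plan is to verify the Rieffel--Li criterion recorded in \autoref{lem:Total-boundedness}: since parts (1)--(3) of the proposition already show that $L_X^\alpha$ is a regular $\ast$-invariant Lipschitz seminorm vanishing on scalars, it suffices to check that $X$ has finite radius with respect to $L_X^\alpha$ and that the set $\overline{\mathrm{B}}_1^{\lVert\cdot\rVert,L_X^\alpha}$ is totally bounded in $X$.

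For finite radius, I would use ergodicity to invoke the unique invariant state $h_X \in \mathcal{S}(X)$ provided by \autoref{lem:Existence-Invariant-State-for-Action}, and bound $\lVert x - h_X(x)\mathbf{1}_X\rVert$ in terms of $L_X^\alpha(x)$. The key computation is that for any state $\phi \in \mathcal{S}(X)$, the counit property from \autoref{lem:Counit-Property-Operator-System} (which requires coamenability) together with the formula $h_X(x)\mathbf{1}_X = x_{(0)} h_A(x_{(1)})$ yields
\begin{align}
\phi(x) - h_X(x) = \phi(x_{(0)})\epsilon(x_{(1)}) - \phi(x_{(0)})h_A(x_{(1)}) = (\epsilon - h_A)\bigl(\phi(x_{(0)})x_{(1)}\bigr).
\end{align}
Since $L_A$ is a Lip-norm, its Monge--Kantorovich distance is bounded on the compact state space, and the definition of $L_X^\alpha$ gives $|\phi(x) - h_X(x)| \leq d^{L_A}(\epsilon, h_A)\, L_X^\alpha(x)$. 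Combining this with the Kadison function representation estimate of \autoref{lem:Kadison-function-representation} yields $\lVert x - h_X(x)\mathbf{1}_X\rVert \leq 2\, d^{L_A}(\epsilon, h_A)\, L_X^\alpha(x)$, so the radius is finite.

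For total boundedness, I would approximate $\epsilon$ in Monge--Kantorovich distance by states with finite Peter--Weyl support. Concretely, find a net $(\mu_\lambda)$ of states on $A$, each supported on a finite set $F_\lambda \subseteq \widehat{\mathbb{G}}$ of isotypical components (in the sense that $\mu_\lambda$ vanishes on $A^\gamma$ for $\gamma \notin F_\lambda$), with $\mu_\lambda \to \epsilon$ weak$^\ast$; since $L_A$ is a Lip-norm, this is equivalent to $d^{L_A}(\mu_\lambda, \epsilon) \to 0$. Define the ucp contractions $P_\lambda : X \to X$ by $P_\lambda(x) := x_{(0)}\mu_\lambda(x_{(1)})$; by \autoref{lem:Properties-isotypical-components}(4), the image $P_\lambda(X)$ lies in the algebraic core component $\mathcal{X}_{F_\lambda} := \bigoplus_{\gamma \in F_\lambda} X^\gamma$, which is finite-dimensional (for ergodic coactions each $X^\gamma$ has dimension at most $\dim(\gamma)^2$, by the standard Boca-type argument adapted to operator systems). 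Repeating the computation from the previous paragraph with $\mu_\lambda$ in place of $h_A$ gives the uniform estimate
\begin{align}
\lVert x - P_\lambda(x)\rVert \leq 2\, d^{L_A}(\epsilon, \mu_\lambda)\, L_X^\alpha(x),
\end{align}
so that $P_\lambda$ approximates $\mathbf{I}^X$ uniformly on $\overline{\mathrm{B}}_1^{\lVert\cdot\rVert,L_X^\alpha}$ in the norm $\lVert\cdot\rVert$. Since $P_\lambda(\overline{\mathrm{B}}_1^{\lVert\cdot\rVert,L_X^\alpha})$ is a bounded subset of the finite-dimensional space $\mathcal{X}_{F_\lambda}$ and hence totally bounded, standard $\varepsilon$-net arguments yield total boundedness of $\overline{\mathrm{B}}_1^{\lVert\cdot\rVert,L_X^\alpha}$.

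The main obstacle is producing the net $(\mu_\lambda)$ of Peter--Weyl-finite states converging to the counit; this is where coamenability has to be used beyond just guaranteeing that $\epsilon$ extends to $A$. A natural construction uses positive elements $b_\lambda \in \mathcal{A}$ normalised so that $\mu_\lambda := h_A(b_\lambda\,\cdot\,)$ is a state concentrated on the finitely many isotypical components supporting $b_\lambda$, together with a standard F\'ejer-type averaging against characters of corepresentations to force weak$^\ast$-convergence to $\epsilon$. A secondary technical point is confirming that, in the operator-system setting, isotypical components of an ergodic coaction remain finite-dimensional; this should adapt from the $\mathrm{C}^\ast$-algebraic proof via the invariant state $h_X$ (the inner product $\langle S, T \rangle \mathbf{1}_X = h_X(S^\ast T\,\cdot\,)$ on $\mathrm{Mor}(\gamma, \alpha)$ controls the dimension).
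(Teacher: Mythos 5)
Your overall strategy for part (4) is the one behind the results the paper quotes rather than reproves: reduce to \autoref{lem:Total-boundedness}, obtain the radius from the invariant state of \autoref{lem:Existence-Invariant-State-for-Action} combined with the counit property of \autoref{lem:Counit-Property-Operator-System}, and obtain total boundedness by approximating the identity by slice maps $x \mapsto x_{(0)}\mu(x_{(1)})$ with Peter--Weyl-finite states $\mu$. Your radius estimate $\lVert x - h_X(x)\mathbf{1}_X\rVert \leq 2\, d^{L_A}(\epsilon,h_A)\,L_X^\alpha(x)$ is correct and is a clean self-contained substitute for the paper's appeal to Sain's Lemma 2.19; the paper's own treatment of (4) otherwise consists of citing Li (Section 8) and Sain (Section 2.5) and explaining how to pass from the order-unit to the operator-system formulation. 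Two smaller points: you do not address parts (1)--(3) at all (the statement includes them, even if they are routine), and the net $(\mu_\lambda)$ of Peter--Weyl-finite states converging to $\epsilon$ is only hinted at. The latter is fillable --- coamenability makes $\epsilon$ a state on $A$, every state of the faithfully represented $A \subseteq \mathcal{B}(H)$ is a weak$^*$ limit of convex combinations of vector states $a \mapsto h_A(b^*ab)$ with $b \in \mathcal{O}(\mathbb{G})$, and each such functional annihilates all but finitely many $\mathcal{A}^\gamma$ by the orthogonality relations --- but since this is exactly where coamenability enters, it needs a proof or a citation, not a gesture at Fej\'er averaging.

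The genuine gap is your claim that the isotypical components $X^\gamma$ of an ergodic coaction on an operator system are finite dimensional ``by the standard Boca-type argument''. Boca's bound uses the multiplicative structure of the C$^*$-algebra: the form you propose on $\mathrm{Mor}(\gamma,\alpha)$ involves products of elements of $X$, which need not lie in $X$, so $h_X$ cannot be applied to them and the argument does not transfer. In fact the statement fails for general ergodic operator-system coactions: take $K$ infinite dimensional, $X := \mathbb{C}\mathbf{1} + e_{12}\otimes\mathcal{B}(K) + e_{21}\otimes\mathcal{B}(K) \subseteq \mathrm{M}_2(\mathcal{B}(K))$, and let $\mathbb{T}$ act by conjugation with $\mathrm{diag}(z,1)$; this yields an ergodic coaction of $\mathrm{C}(\mathbb{T})$ in the sense of \autoref{def:Action-on-Operator-System} (uci, coassociative, Podle\'s density all hold), whose spectral subspace at the character $z$ is a copy of $\mathcal{B}(K)$. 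Moreover, taking for $L_A$ the Lipschitz constant of the arc-length metric, one gets $L_X^\alpha(e_{12}\otimes a) \leq \lVert a\rVert$ while $\lVert e_{12}\otimes a\rVert = \lVert a\rVert$, so $\overline{\mathrm{B}}_1^{\lVert\cdot\rVert,L_X^\alpha}$ contains an isometric copy of the unit ball of $\mathcal{B}(K)$ and is not totally bounded: your finite-dimensional-range mechanism cannot be completed at this level of generality, and the example shows that some finite-multiplicity input is indispensable for the total-boundedness half (a point worth flagging for the statement itself). In the situation the paper actually needs, $X = A^{(\Lambda)}$ with the coactions of \autoref{lem:Induced-Action-on-Toeplitz-System}, finite dimensionality does hold, since by equivariance of $\tau_\Lambda$ the isotypical components of $A^{(\Lambda)}$ are images under $\tau_\Lambda$ of the finite-dimensional coefficient spaces $\mathcal{A}^\gamma$. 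So to salvage your route you should either establish or assume finite dimensionality of the $X^\gamma$ (verifying it in the application), or follow the paper and reduce to the arguments of Li and Sain.
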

	
	\begin{proof}
		(1)
		The fact that $L_X^\alpha$ is a seminorm is immediate from the seminorm properties of $L_A$.
		
		(2)
		The fact that the seminorm $L_X^\alpha$ is $^*$-invariant follows from $^*$-invariance of the seminorm $L_A$ together with the identity $\phi((x^*)_{0})(x^*)_{(1)} = (\phi \otimes \mathbf{I}^A)\alpha(x^*) = ((\phi \otimes \mathbf{I}^A)\alpha(x))^* = \phi(x_{(0)})(x_{(1)})^*$, for all $x \in X$, $\phi \in \mathcal{S}(X)$.
		Moreover, since slice maps are unital, i.e.\@ $\phi((\mathbf{1}_X)_{(0)}) (\mathbf{1}_X)_{(1)} = \mathbf{1}_A$, it is clear that $\mathbb{C}\mathbf{1}_X \subseteq \ker(L_X^\alpha)$.
		Last, observe that since the coaction $\alpha$ restricts to a Hopf algebra coaction $\mathcal{X} \rightarrow \mathcal{X} \otimes \mathcal{O}(\mathbb{G})$, we have $\phi(x_{(0)})x_{(1)} \in \mathcal{O}(\mathbb{G})$, for all $x \in \mathcal{X}$.
		By regularity of $L_A$, we have $\mathcal{O}(\mathbb{G}) \subseteq \Dom(L_A)$.
		We conclude that $L_X^\alpha$ is finite on $\mathcal{X}$ and thus a regular Lipschitz seminorm.
		
		(3)
		Right invariance of the seminorm $L_X^\alpha$ is a direct computation:
		\begin{align}\label{eqn:Right-invariance}
			L_X^\alpha(x_{(0)}\mu(x_{(1)}))
			&= \sup_{\phi \in \mathcal{S}(X)} L_A(\phi(x_{(0)})x_{(1)}\mu(x_{(2)})) \\
			&\leq \sup_{\phi \in \mathcal{S}(X)} L_A(\phi(x_{(0)})x_{(1)}) \\
			&= L_X^\alpha(x),
		\end{align}
		for all elements $x \in X$ and states $\mu \in \mathcal{S}(A)$, where we applied the Fubini theorem for slice maps and right invariance of $L_A$. 
		
		(4)		
		To establish that $L_X^\alpha$ is a Lip-norm it remains to show that $(X,L_X^\alpha)$ has finite radius and that the subset $\overline{\mathrm{B}}_1^{\lVert\cdot\rVert,L_X^\alpha} \subseteq X$ is totally bounded.
		We refrain from going through the entire argument here, but point to \cite[Section 8]{Li09}, in particular Lemma 8.5, Lemma 8.6 and Lemma 8.7 therein, and \cite[Section 2.5]{Sai09} for details.
		However, we can deduce our claim from the results in \cite{Sai09}.
		In fact, by \cite[Lemma 2.19]{Sai09}, the order unit quantum metric space $(X_\mathrm{sa},(L_X^\alpha)_\mathrm{sa})$ has radius at most $2 r_{A_\mathrm{sa}}$, where $r_{A_\mathrm{sa}}$ is the radius of $(A_\mathrm{sa},(L_A)_\mathrm{sa})$.
		Applying the triangle inequality to the decomposition of $x$ into its real and imaginary part yields that the radius of $(X,L_X^\alpha)$ is at most $4r_{A_\mathrm{sa}}$.
		Moreover, it follows from the proof of \cite[Proposition 2.18]{Sai09} that the subset $\overline{\mathrm{B}}_1^{\lVert\cdot\rVert_{\mathrm{sa}},(L_X^\alpha)_\mathrm{sa}}$ of $X_{\mathrm{sa}}$ is totally bounded, from which we conclude that the closed subset $\overline{\mathrm{B}}_1^{\lVert\cdot\rVert,L_X^\alpha}$ of the totally bounded subset $\overline{\mathrm{B}}_1^{\lVert\cdot\rVert_{\mathrm{sa}},(L_X^\alpha)_\mathrm{sa}} + i\overline{\mathrm{B}}_1^{\lVert\cdot\rVert_{\mathrm{sa}},(L_X^\alpha)_\mathrm{sa}}$ of the operator system $X_\mathrm{sa}+iX_\mathrm{sa} = X$ is totally bounded.
	\end{proof}
	
	\begin{remark}\label{rmk:Making-Lip-norms-invariant}
		Any seminorm $L_A$ on the function algebra $A$ can be upgraded to a right invariant seminorm.
		Indeed, as in \cite[Proposition 8.9]{Li09}, we set
		\begin{align}
			L_A'(a) := \sup_{\mu \in \mathcal{S}(A)} L_A (a_{(0)}\mu(a_{(1)})),
		\end{align}
		and check for right invariance:
		\begin{align}
			L_A'(a_{(0)}\nu(a_{(0)}))
			&= L_A'((\mathbf{I}^A \otimes \nu) \Delta(a)) \\
			&= \sup_{\mu \in \mathcal{S}(A)} L_A((\mathbf{I}^A \otimes \mu) \Delta (\mathbf{I}^A \otimes \nu) \Delta (a)) \\
			&= \sup_{\mu \in \mathcal{S}(A)} L_A((\mathbf{I}^A \otimes \mu \otimes \nu) (\Delta \otimes \mathbf{I}^A) \Delta (a)) \\
			&= \sup_{\mu \in \mathcal{S}(A)} L_A(a_{(0)} (\mu \ast \nu) (a_{(1)})) \\
			&\leq \sup_{\mu \in \mathcal{S}(A)} L_A(a_{(0)} \mu (a_{(1)})) \\
			&= L_A'(a),
		\end{align}
		for all $a \in A$, $\nu \in \mathcal{S}(A)$.
		
		Similarly, setting
		\begin{align}
			L_A''(a) := \sup_{\mu \in \mathcal{S}(A)} L_A (\mu(a_{(0)})a_{(1)})
		\end{align}
		and 
		\begin{align}
			L_A''' := \max\{L_A',L_A''\}
		\end{align}
		give respectively left and bi-invariant seminorms on $A$.
	\end{remark}
	
	\begin{remark}\label{rmk:Invariant-seminorms-coamenable-case}
		If the compact quantum group $\mathbb{G}$ is coamenable, it is clear that $L_A \leq L_A'$, where $L_A'$ is the induced seminorm from \autoref{rmk:Making-Lip-norms-invariant}.
		Indeed, 
		\begin{align}
			L_A(a) = L_A(a_{(0)}\epsilon(a_{(1)})) \leq \sup_{\mu \in \mathcal{S}(A)} L_A(a_{(0)}\mu(a_{(1)}))
			= L_A'(a),
		\end{align}
		for all elements $a \in A$.
		Conversely, if $L_A$ is right invariant to begin with, we have that $L_A' \leq L_A$, so that in this case $L_A = L_A'$.
		
		Analogous statements hold for the induced left, respectively bi-invariant seminorms $L_A''$ and $L_A'''$.
	\end{remark}
	
	\begin{remark}\label{rmk:Bi-invariant-regular-Lip-norms-exist}
		As pointed out in \cite[Remark 8.2]{Li09}, it follows from \cite[Proposition 1.1]{Rie02a} that, if the function algebra $A$ is separable, it admits a regular Lip-norm.
		Together with \autoref{rmk:Making-Lip-norms-invariant} this shows that, if $A$ is the function algebra of a coamenable compact quantum group and if $A$ is separable, it admits a bi-invariant regular Lip-norm \cite[Corollary 8.10]{Li09}.
	\end{remark}
	
	For similar observations as the following, \emph{cf.\@} also the proofs of \cite[Lemma 8.7]{Li09} and \cite[Proposition 14]{GEvS23}.
	
	\begin{proposition}\label{lem:Lip-estimate-for-action-OS}
		Let $L_A$ be a Lipschitz seminorm on $A$ with $\ker(L_A) = \mathbb{C}\mathbf{1}_A$, and let $L_X^\alpha$ be the induced seminorm (\ref{eqn:Induced-Lip-norm}) on $X$.
		Let $\mu, \nu \in \mathcal{S}(A)$ be states on $A$ and consider the induced slice maps $X \rightarrow X$, given by $x \mapsto x_{(0)}\mu(x_{(1)})$ and $x \mapsto x_{(0)}\nu(x_{(1)})$ respectively. 
		Then the following holds, for all $x \in X$:
		\begin{align}
			\lVert x_{(0)}\mu(x_{(1)}) - x_{(0)}\nu(x_{(1)})\rVert \leq 2 d^{L_A}(\mu,\nu) L_X^\alpha(x) 
		\end{align}
		
		Similarly, if $\beta : X \rightarrow A \otimes X$ is a left coaction and $L_X^\beta$ the induced seminorm on $X$, the following holds, for all $x \in X$:
		\begin{align}
			\lVert \mu(x_{(-1)})x_{(0)} - \nu(x_{(-1)})x_{(0)}\rVert \leq 2 d^{L_A}(\mu,\nu) L_X^\beta(x)
		\end{align}
	\end{proposition}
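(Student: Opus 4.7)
The plan is to reduce the operator-norm bound to a bound on states of $X$ via Kadison's function representation, and then to interpret the resulting scalar quantity as a difference $|\mu(a) - \nu(a)|$ for a single element $a \in A$ whose Lip-norm is controlled by $L_X^\alpha(x)$. The key observation is that the very definition of $L_X^\alpha$ in \eqref{eqn:Induced-Lip-norm} is tailor-made to make this last step work: for any state $\phi \in \mathcal{S}(X)$, the element $a_\phi := \phi(x_{(0)}) x_{(1)} = (\phi \otimes \mathbf{I}^A) \alpha(x)$ lies in $A$ and satisfies $L_A(a_\phi) \leq L_X^\alpha(x)$.

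Concretely, first set $y := x_{(0)}\mu(x_{(1)}) - x_{(0)}\nu(x_{(1)}) = x_{(0)} (\mu-\nu)(x_{(1)}) \in X$. By \autoref{lem:Kadison-function-representation},
\begin{align}
\|y\| \leq 2 \sup_{\phi \in \mathcal{S}(X)} |\phi(y)|.
\end{align}
For a fixed $\phi \in \mathcal{S}(X)$, the Fubini theorem for slice maps (\autoref{lem:Fubini-for-cb-maps}) gives
\begin{align}
\phi(y) = \phi(x_{(0)}) \mu(x_{(1)}) - \phi(x_{(0)}) \nu(x_{(1)}) = \mu(a_\phi) - \nu(a_\phi),
\end{align}
since $\phi(x_{(0)})$ is scalar. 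Thus the task reduces to bounding $|\mu(a_\phi) - \nu(a_\phi)|$ uniformly in $\phi$.

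Now I invoke the definition of the Monge--Kantorovich distance. If $L_A(a_\phi) > 0$, rescaling $a_\phi / L_A(a_\phi)$ into the unit ball of $L_A$ yields
\begin{align}
|\mu(a_\phi) - \nu(a_\phi)| \leq L_A(a_\phi) \, d^{L_A}(\mu,\nu) \leq L_X^\alpha(x) \, d^{L_A}(\mu,\nu),
\end{align}
where the second inequality comes directly from the defining supremum \eqref{eqn:Induced-Lip-norm} of $L_X^\alpha(x)$. In the degenerate case $L_A(a_\phi) = 0$, the assumption $\ker(L_A) = \mathbb{C} \mathbf{1}_A$ forces $a_\phi \in \mathbb{C} \mathbf{1}_A$, so $\mu(a_\phi) = \nu(a_\phi)$ and the same bound holds trivially. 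Taking the supremum over $\phi \in \mathcal{S}(X)$ and combining with the Kadison bound yields the desired inequality with the factor of $2$.

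The argument for a left coaction $\beta : X \to A \otimes X$ is entirely symmetric: replace $a_\phi$ by $a_\phi' := \phi(x_{(0)}) x_{(-1)} = (\mathbf{I}^A \otimes \phi) \beta(x) \in A$, which by the analogous definition of $L_X^\beta$ satisfies $L_A(a_\phi') \leq L_X^\beta(x)$, and repeat the estimate verbatim. I expect no real obstacle; the content is simply that $L_X^\alpha$ is designed so that the ``worst'' scalar contraction of $\alpha(x)$ against a state of $X$ remains Lipschitz-controlled in $A$.
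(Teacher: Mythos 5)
Your proposal is correct and follows essentially the same route as the paper's proof: Kadison's function representation to reduce to states $\phi \in \mathcal{S}(X)$, the Fubini theorem for slice maps to rewrite $\phi(y)$ as $(\mu-\nu)(\phi(x_{(0)})x_{(1)})$, and then the definition of the Monge--Kantorovich distance together with the defining supremum of $L_X^\alpha$. Your explicit treatment of the degenerate case $L_A(a_\phi)=0$ via $\ker(L_A)=\mathbb{C}\mathbf{1}_A$ is a small but welcome addition that the paper leaves implicit.
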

	
	\begin{proof}
		For all elements $x \in X$ and any functional $\rho \in X^*$, the following holds:
		\begin{align}\label{eqn:Estimate-action-of-states}
			\lVert x_{(0)}\rho(x_{(1)})\rVert
			\leq 2 \sup_{\phi \in \mathcal{S}(X)} |\phi(x_{(0)}\rho(x_{(1)}))| 
			= 2 \sup_{\phi \in \mathcal{S}(X)} |\rho(\phi(x_{(0)})x_{(1)})|,
		\end{align}
		by the Kadison function representation and the Fubini theorem for slice maps. 
		Recall from the definition of the Monge--Kantorovich distance that $\frac{|(\mu-\nu)(a)|}{L_A(a)} \leq d^{L_A}(\mu,\nu)$, for all $a \in A \setminus \ker(L_A) = A \setminus \mathbb{C}\mathbf{1}_A$, and therefore \begin{align}
			|\mu(a)-\nu(a)| \leq d^{L_A}(\mu,\nu) L_A(a),
		\end{align} for all $a \in A$.
		By applying (\ref{eqn:Estimate-action-of-states}), the definition of the Monge--Kantorovich distance and the definition of the seminorm $L_X^\alpha$, we now obtain the result:
		\begin{align}
			\lVert x_{(0)}\mu(x_{(1)}) - x_{(0)}\nu(x_{(1)})\rVert
			&\leq 2 \sup_{\phi \in \mathcal{S}(X)} |(\mu-\nu)(\phi(x_{(0)})x_{(1)})| \\
			&\leq 2 \sup_{\phi \in \mathcal{S}(X)} d^{L_A}(\mu,\nu) L_A(\phi(x_{(0)})x_{(1)}) \\
			&= 2 d^{L_A}(\mu,\nu) L_X^\alpha(x) 
		\end{align}
		
		The proof of the statement for the left coaction $\beta$ is analogous.
	\end{proof}
	
	With the right and left coactions $\alpha$ and $\beta$ respectively replaced by the comultiplication $\Delta$, and the seminorms $L_X^\alpha$ and $L_X^\beta$ respectively replaced by the seminorms $L_A''$ and $L_A'$ from \autoref{rmk:Making-Lip-norms-invariant}, we obtain the following corollary.
	
	\begin{corollary}\label{cor:Lip-estimate-for-action-Alg}
		Assume that $L_A$ is a Lipschitz seminorm on $A$ with
		$\ker(L_A) = \mathbb{C}\mathbf{1}_A$, and let $L_A'$ and $L_A''$ be the induced right and left invariant seminorms as in \autoref{rmk:Making-Lip-norms-invariant}. 
		Then, for all states $\mu, \nu \in \mathcal{S}(A)$ and elements $a \in A$, the following inequalities hold:
		\begin{align}
			\lVert a_{(0)}\mu(a_{(1)}) - a_{(0)}\nu(a_{(1)})\rVert &\leq 2 d^{L_A}(\mu,\nu) L_A''(a), \text{ and } \\
			\lVert \mu(a_{(-1)})a_{(0)} - \nu(a_{(-1)})a_{(0)}\rVert &\leq 2 d^{L_A}(\mu,\nu) L_A'(a).
		\end{align}
	\end{corollary}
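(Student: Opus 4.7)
The strategy is to apply \autoref{lem:Lip-estimate-for-action-OS} directly to the special case $X = A$, using the comultiplication $\Delta$ as the right coaction $\alpha$ for the first inequality and as the left coaction $\beta$ for the second. Since $\Delta : A \rightarrow A \otimes A$ is simultaneously a right and a left coaction of $A$ on itself, both applications are legitimate.

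For the first inequality I would take $\alpha := \Delta$ and unwind the induced seminorm from (\ref{eqn:Induced-Lip-norm}):
\begin{align}
L_X^\alpha(a)
= \sup_{\phi \in \mathcal{S}(A)} L_A\bigl(\phi(a_{(0)}) a_{(1)}\bigr),
\end{align}
which in Sweedler notation is exactly the formula defining $L_A''$ in \autoref{rmk:Making-Lip-norms-invariant}. Hence \autoref{lem:Lip-estimate-for-action-OS} yields
\begin{align}
\lVert a_{(0)} \mu(a_{(1)}) - a_{(0)} \nu(a_{(1)}) \rVert
\leq 2 d^{L_A}(\mu,\nu)\, L_X^\alpha(a)
= 2 d^{L_A}(\mu,\nu)\, L_A''(a).
\end{align}

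For the second inequality, the same argument with $\beta := \Delta$ viewed as a left coaction gives the induced seminorm $L_X^\beta(a) = \sup_{\phi \in \mathcal{S}(A)} L_A(a_{(-1)} \phi(a_{(0)}))$, which after rewriting the Sweedler indices matches the formula $L_A'(a) = \sup_{\mu \in \mathcal{S}(A)} L_A(a_{(0)} \mu(a_{(1)}))$ of \autoref{rmk:Making-Lip-norms-invariant}. Plugging into the second inequality of \autoref{lem:Lip-estimate-for-action-OS} produces the desired bound.

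There is no real obstacle here: the only thing to be careful about is matching the two notational conventions (right-coaction Sweedler indices $(0),(1)$ versus left-coaction indices $(-1),(0)$) used for $\Delta$ and verifying that $L_A^\alpha$ and $L_A^\beta$ coincide verbatim with $L_A''$ and $L_A'$. Once this identification is made explicit, the corollary is an immediate specialization of the preceding proposition.
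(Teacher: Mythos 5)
Your proposal is correct and matches the paper's own derivation: the corollary is obtained precisely by specializing \autoref{lem:Lip-estimate-for-action-OS} to $X = A$ with $\Delta$ as both the right and left coaction, identifying $L_A^{\alpha}$ with $L_A''$ and $L_A^{\beta}$ with $L_A'$ exactly as you do (including the correct matching of Sweedler index conventions).
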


	\section{Peter--Weyl truncations of a compact quantum group}\label{sec:PW-truncations}
	
	We now investigate Peter--Weyl truncations of compact quantum groups and their convergence as compact quantum metric spaces.
	The reader will notice many analogies in the methods presented here and those used for Fourier truncations of compact quantum groups \cite{Rie23}.
	In fact, our exposition should set the stage for appropriately relating these two perspectives in future research.
	
	Let $\mathbb{G}$ be a compact quantum group and denote by $\widehat{\mathbb{G}}$ its set of unitary equivalence classes of finite dimensional unitary corepresentations.
	Write $A := \mathrm{C}_\mathrm{r}(\mathbb{G})$ for the reduced function algebra, $\Delta := \Delta_\mathrm{r}$ for the comultiplication thereon and $H := \mathrm{L}^2(\mathbb{G},h_A)$ for the GNS-space.
	Throughout this subsection fix a subset $\Lambda \subseteq \widehat{\mathbb{G}}$.
	This gives a closed subspace $H_\Lambda := \bigoplus_{\gamma \in \Lambda} H_\gamma \otimes \overline{H_\gamma}$ of the Hilbert space $H$ in the Peter--Weyl decomposition (\ref{eqn:Peter--Weyl-decomposition}).
	
	Denote by $P_\Lambda \in \mathcal{B}(\mathrm{L}^2(\mathbb{G}))$ the orthogonal projection onto the subspace $H_\Lambda$.
	The multiplicative unitaries $W, V \in \mathcal{B}(H \otimes H)$ commute with the projections $P_\Lambda \otimes \mathbf{I}^H$ and $\mathbf{I}^H \otimes P_\Lambda$ in $\mathcal{B}(H \otimes H)$ respectively, since $W((H_\gamma \otimes \overline{H_\gamma}) \otimes H) \subseteq (H_\gamma \otimes \overline{H_\gamma}) \otimes H$ and $V(H \otimes (H_\gamma \otimes \overline{H_\gamma})) \subseteq H \otimes (H_\gamma \otimes \overline{H_\gamma})$, for all $\gamma \in \widehat{\mathbb{G}}$.
	
	\begin{definition}
		We denote by $\tau_\lambda : \mathcal{B}(H) \rightarrow \mathcal{B}(H_\Lambda)$ the \emph{compression map}, given by
		\begin{align}
			\tau_\Lambda(T) := P_\Lambda T P_\Lambda,
		\end{align}
		for all $T \in \mathcal{B}(H)$, and write $A^{(\Lambda)} := \tau_\Lambda(A) \subseteq \mathcal{B}(H_\Lambda)$ for the image of the function algebra $A$ under the compression map.
	\end{definition}
	
	\begin{notation}
		Throughout this section we may drop the subindex $\Lambda$ of the projection $P_\Lambda$ and the compression map $\tau_\Lambda$ whenever convenient.
	\end{notation}
	
	\begin{remark}\label{rmk:Compression-map-ucp-onto}
		Note that $A^{(\Lambda)}$ is an operator system and the compression map $\tau : A \rightarrow A^{(\Lambda)}$ is ucp onto.
	\end{remark}
	
	\begin{proposition}\label{lem:Induced-Action-on-Toeplitz-System}
		There are unique ergodic cocommuting right and left coactions $\alpha^\tau : A^{(\Lambda)} \rightarrow A^{(\Lambda)} \otimes A$ and $\beta^\tau : A^{(\Lambda)} \rightarrow A \otimes A^{(\Lambda)}$ which satisfy
		\begin{align}\label{eqn:Induced-coactions-on-compression}
			(\tau \otimes \mathbf{I}^A) \Delta = \alpha^\tau\tau \text{ and } (\mathbf{I}^A \otimes \tau) \Delta = \beta^\tau\tau,
		\end{align}
		respectively.
	\end{proposition}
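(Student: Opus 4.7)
The plan is to define $\alpha^\tau$ and $\beta^\tau$ directly by the intertwining equations in (\ref{eqn:Induced-coactions-on-compression}), use the multiplicative unitaries $W$ and $V$ to establish well-definedness and complete isometry, and then invoke \autoref{lem:Induced-action} (together with its obvious left-coaction counterpart) for the coaction property, Podleś density and ergodicity.

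Concretely, I would first show that the assignment $\tau(a) \mapsto (\tau \otimes \mathbf{I}^A)\Delta(a) = (P_\Lambda \otimes \mathbf{1}_A)\Delta(a)(P_\Lambda \otimes \mathbf{1}_A)$ only depends on $\tau(a)$. Writing $\Delta(a) = W(a \otimes \mathbf{1})W^*$ via (\ref{eqn:Multiplicative-Unitary}) and using the commutation of $W$ with $P_\Lambda \otimes \mathbf{I}^H$ that was recorded right before the statement, one computes
\begin{align}
(P_\Lambda \otimes \mathbf{1})\Delta(a)(P_\Lambda \otimes \mathbf{1}) = W(P_\Lambda a P_\Lambda \otimes \mathbf{1})W^*,
\end{align}
so the expression is manifestly a function of $\tau(a)$ alone. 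This simultaneously identifies $\alpha^\tau$ as the restriction of the unital $^*$-isomorphism $x \mapsto W(x \otimes \mathbf{1})W^*$ from the corner $\mathcal{B}(H_\Lambda) = P_\Lambda \mathcal{B}(H) P_\Lambda$ onto the corner $(P_\Lambda \otimes \mathbf{1})\mathcal{B}(H \otimes H)(P_\Lambda \otimes \mathbf{1})$, so $\alpha^\tau$ is uci; its image lies in $(P_\Lambda \otimes \mathbf{1})(A \otimes A)(P_\Lambda \otimes \mathbf{1}) \subseteq A^{(\Lambda)} \otimes A$, as required. The entirely analogous argument with $V$ and $\mathbf{I}^H \otimes P_\Lambda$ delivers the left coaction $\beta^\tau$.

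Having verified the hypotheses of \autoref{lem:Induced-action} (with $X = A$, $Y = A^{(\Lambda)}$, $\alpha = \Delta$) together with the ergodicity of $\Delta$, one obtains that $\alpha^\tau$ is an ergodic right coaction; the symmetric version of that lemma yields the analogous statement for $\beta^\tau$. The cocommuting property reduces to coassociativity of $\Delta$: using the two intertwining equations and the Fubini theorem \autoref{lem:Fubini-for-cb-maps}, both $(\beta^\tau \otimes \mathbf{I}^A)\alpha^\tau \tau$ and $(\mathbf{I}^A \otimes \alpha^\tau)\beta^\tau \tau$ rewrite as $(\mathbf{I}^A \otimes \tau \otimes \mathbf{I}^A)$ applied to $(\Delta \otimes \mathbf{I}^A)\Delta$ and $(\mathbf{I}^A \otimes \Delta)\Delta$ respectively, which coincide by coassociativity; surjectivity of $\tau$ then upgrades this to the cocommuting relation on all of $A^{(\Lambda)}$. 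Uniqueness of both $\alpha^\tau$ and $\beta^\tau$ is immediate from the surjectivity of $\tau$ and the fact that the defining equations prescribe their values on the dense (actually, equal to the whole) subspace $\tau(A) = A^{(\Lambda)}$.

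The main obstacle is the first step: being careful with the multiplier-algebra formalism for $W, V$, checking that the formula $W(\tau(a) \otimes \mathbf{1})W^*$ actually lands in the spatial tensor product $A^{(\Lambda)} \otimes A$ rather than in some larger $\mathcal{B}(H_\Lambda) \otimes \mathcal{B}(H)$, and ensuring that the uci property is inherited through the corner embedding. Once that technical point is absorbed, everything else is a formal consequence of coassociativity of $\Delta$ and the general results already established in \autoref{sec:Coactions}.
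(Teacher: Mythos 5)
Your proposal is correct and follows essentially the same route as the paper: both use the commutation of the multiplicative unitaries $W$, $V$ with $P_\Lambda \otimes \mathbf{I}^H$ and $\mathbf{I}^H \otimes P_\Lambda$ to get well-definedness and the uci property (the paper via the matrix-norm identity $\lVert((\tau\otimes\mathbf{I}^A)\Delta(a_{ij}))_{i,j}\rVert = \lVert(\tau(a_{ij}))_{i,j}\rVert$, you via the equivalent explicit formula $\alpha^\tau(\tau(a)) = W(\tau(a)\otimes\mathbf{1})W^*$), and then invoke \autoref{lem:Induced-action} for the coaction property, Podle\'s density and ergodicity. Your additional spelling-out of cocommutativity (from coassociativity plus surjectivity of $\tau$) and of uniqueness is a harmless completion of details the paper leaves implicit, and the ``main obstacle'' you flag is handled exactly by \autoref{lem:Fubini-for-cb-maps}, since $(\tau\otimes\mathbf{I}^A)\Delta(a)$ lands in $A^{(\Lambda)}\otimes A$ by construction.
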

	
	\begin{proof}
		The claim follows from \autoref{lem:Induced-action} once we know that (\ref{eqn:Induced-coactions-on-compression}) well-defines maps $\alpha^{\tau}$ and $\beta^\tau$ which are furthermore uci.
		To this end, let $n \geq 1$ be a positive integer and let $(a_{ij})_{i,j} \in \mathrm{M}_n(A) \subseteq \mathcal{B}(H \otimes \mathbb{C}^n)$ be an $n \times n$ matrix with entries in $A$.
		Then, from the fact that the multiplicative unitary $W$ commutes with the projection $P \otimes \mathbf{I}^H$ and from unitarity of $W$, we obtain:
		\begin{align}
			\lVert \left((\tau \otimes \mathbf{I}^A)\Delta(a_{ij})\right)_{i,j}\rVert
			&= \lVert \left((P \otimes \mathbf{I}^H) W (a_{ij} \otimes \mathbf{1}_A) W^* (P \otimes \mathbf{I}^H)\right)_{i,j}\rVert \\
			&= \lVert (W \otimes \mathbf{I}^{\mathbb{C}^n}) \left((P \otimes \mathbf{I}^H) (a_{ij} \otimes \mathbf{1}_A) (P \otimes \mathbf{I}^H)\right)_{i,j} (W^* \otimes \mathbf{I}^{\mathbb{C}^n})\rVert \\
			&= \lVert \left((P \otimes \mathbf{I}^H) (a_{ij} \otimes \mathbf{1}_A) (P \otimes \mathbf{I}^H)\right)_{i,j} \rVert \\
			&= \lVert\left(Pa_{ij}P \otimes \mathbf{1}_{\mathcal{B}(H)}\right)_{i,j}\rVert \\
			&= \lVert\left(\tau(a_{ij})\right)_{i,j}\rVert
		\end{align}
		This shows in particular $\ker(\tau) = \ker((\tau \otimes \mathbf{I}^A)\Delta)$, whence $\alpha^\tau$ is well-defined.
		Moreover, we have proven that $\alpha^\tau$ is uci.
		The proof that $\beta^\tau$ is a well-defined uci map is analogous by exchanging the multiplicative unitary $W$ for $V$.
	\end{proof}
	
	For the rest of this section, we assume that the compact quantum $\mathbb{G}$ is coamenable with separable function algebra $A = \mathrm{C}(\mathbb{G})$.
	
	Let $L_A$ be a right/left/bi-invariant regular Lip-norm on the function algebra $A$, \emph{cf.\@} \autoref{rmk:Bi-invariant-regular-Lip-norms-exist}.
	Recall from \autoref{prop:Li-Thm-1.4} that the Lip-norm $L_A$ induces right/left/bi-invariant Lip-norms on $A^{(\Lambda)}$:
	
	\begin{corollary}\label{cor:Compression-is-a-CQMS}
		The operator system $A^{(\Lambda)}$ equipped with any of the induced Lip-norms $L_{A^{(\Lambda)}}^{\alpha^\tau}$, $L_{A^{(\Lambda)}}^{\beta^\tau}$ and $L_{A^{(\Lambda)}}^{\alpha^\tau,\beta^\tau} := \max\{L_{A^{(\Lambda)}}^{\alpha^\tau},L_{A^{(\Lambda)}}^{\beta^\tau}\}$ is a compact quantum metric space.
	\end{corollary}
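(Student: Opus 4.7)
The plan is straightforward: verify the two individual Lip-norm cases by direct appeal to \autoref{prop:Li-Thm-1.4}, and handle the maximum case via the total boundedness criterion \autoref{lem:Total-boundedness}.

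For $L_{A^{(\Lambda)}}^{\alpha^\tau}$ and $L_{A^{(\Lambda)}}^{\beta^\tau}$ individually, I would simply check that the hypotheses of \autoref{prop:Li-Thm-1.4}(4) are met. The compact quantum group $\mathbb{G}$ is coamenable by assumption, the Lip-norm $L_A$ on the function algebra is regular, and by \autoref{lem:Induced-Action-on-Toeplitz-System} the coactions $\alpha^\tau$ and $\beta^\tau$ on the compression $A^{(\Lambda)}$ are both ergodic. These are exactly the required hypotheses, so each induced seminorm is a Lip-norm, making $A^{(\Lambda)}$ into a compact quantum metric space in each of these two cases.

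For the combined seminorm $L := L_{A^{(\Lambda)}}^{\alpha^\tau,\beta^\tau}$, the Lipschitz seminorm properties pass from the constituents to the maximum. The properties of $^*$-invariance and vanishing on scalars are immediate for a pointwise maximum. For the dense domain property, I would note that both constituents are regular by \autoref{prop:Li-Thm-1.4}(2); since the compression map $\tau$ intertwines the coaction $\Delta$ on $A$ with $\alpha^\tau$ on $A^{(\Lambda)}$ (and similarly with $\beta^\tau$), a direct check shows that $\tau(\mathcal{A})$ is contained in both algebraic cores, so both $L_{A^{(\Lambda)}}^{\alpha^\tau}$ and $L_{A^{(\Lambda)}}^{\beta^\tau}$ are finite on the compressed coordinate algebra $\tau(\mathcal{A})$, which is dense in $A^{(\Lambda)}$ by density of $\mathcal{A}$ in $A$ and continuity of $\tau$. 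To upgrade $L$ to a Lip-norm, I would invoke \autoref{lem:Total-boundedness}: since $L \geq L_{A^{(\Lambda)}}^{\alpha^\tau}$, the radius of $(A^{(\Lambda)}, L)$ is at most the (finite) radius with respect to $L_{A^{(\Lambda)}}^{\alpha^\tau}$, and the unit ball $\overline{\mathrm{B}}_1^{\lVert\cdot\rVert, L}$ is contained in $\overline{\mathrm{B}}_1^{\lVert\cdot\rVert, L_{A^{(\Lambda)}}^{\alpha^\tau}}$, which is totally bounded, so $\overline{\mathrm{B}}_1^{\lVert\cdot\rVert, L}$ is also totally bounded.

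I do not anticipate any genuine obstacle. The entire corollary is essentially a direct consequence of the preparatory work in \autoref{sec:Coactions} and \autoref{sec:Invariant-Lip-norms}; the only small extra observation is that the pointwise maximum of two Lip-norms on the same operator system is again a Lip-norm, which is a routine consequence of the total boundedness characterization.
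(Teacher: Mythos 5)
Your proposal is correct and matches the paper's (implicit) argument: the corollary is stated without a separate proof precisely because it is a direct application of \autoref{prop:Li-Thm-1.4}(4) to the ergodic coactions $\alpha^\tau$ and $\beta^\tau$ supplied by \autoref{lem:Induced-Action-on-Toeplitz-System}, under the standing coamenability and regularity assumptions. Your additional verification that the maximum of the two induced Lip-norms is again a Lip-norm (dense domain via $\tau(\mathcal{A})$, plus finite radius and total boundedness inherited from either constituent through \autoref{lem:Total-boundedness}) is exactly the routine detail the paper leaves implicit, and it is carried out correctly.
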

	
	\begin{lemma}\label{lem:Compression-map-morphism}
		Let $L_A$ be a left invariant regular Lip-norm on the function algebra $A$ and let $L_{A^{(\Lambda)}}^{\alpha^\tau}$ be the induced Lip-norm on the operator system $A^{(\Lambda)}$.
		Then the compression map $\tau_\Lambda : A \rightarrow A^{(\Lambda)}$ is a morphism of compact quantum metric spaces.
		Analogous statements hold if $L_A$ is right or bi-invariant.
	\end{lemma}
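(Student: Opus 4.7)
The plan is to unpack the defining formula of the induced Lip-norm, exploit the equivariance relation \eqref{eqn:Induced-coactions-on-compression} between the compression map and the comultiplication, and conclude by applying left invariance of $L_A$. Since $\tau_\Lambda$ is already known to be ucp onto by the remark preceding \autoref{lem:Induced-Action-on-Toeplitz-System}, it suffices to exhibit a constant $C \geq 0$ such that $L_{A^{(\Lambda)}}^{\alpha^\tau}(\tau_\Lambda(a)) \leq C\, L_A(a)$ for all $a \in A$; in fact I expect $C = 1$ to work, so the map is even Lip-norm contractive.

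Concretely, fix $a \in A$ and a state $\phi \in \mathcal{S}(A^{(\Lambda)})$. Using the equivariance $(\tau_\Lambda \otimes \mathbf{I}^A)\Delta = \alpha^\tau \tau_\Lambda$ from \autoref{lem:Induced-Action-on-Toeplitz-System}, we may write in Sweedler notation $\tau_\Lambda(a)_{(0)} \otimes \tau_\Lambda(a)_{(1)} = \tau_\Lambda(a_{(0)}) \otimes a_{(1)}$, so by the Fubini theorem for slice maps
\begin{align}
\phi(\tau_\Lambda(a)_{(0)})\,\tau_\Lambda(a)_{(1)}
= \phi(\tau_\Lambda(a_{(0)}))\, a_{(1)}
= \mu(a_{(0)})\, a_{(1)},
\end{align}
where $\mu := \tau_\Lambda^* \phi = \phi \circ \tau_\Lambda \in \mathcal{S}(A)$, since pulling back a state along a ucp map produces a state. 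Viewing the comultiplication as a left coaction of $A$ on itself, the element $\mu(a_{(0)})\, a_{(1)}$ is exactly what left invariance of $L_A$ controls, so
\begin{align}
L_A\bigl(\phi(\tau_\Lambda(a)_{(0)})\,\tau_\Lambda(a)_{(1)}\bigr)
= L_A\bigl(\mu(a_{(0)})\, a_{(1)}\bigr)
\leq L_A(a).
\end{align}
Taking the supremum over all $\phi \in \mathcal{S}(A^{(\Lambda)})$ in the definition \eqref{eqn:Induced-Lip-norm} then yields $L_{A^{(\Lambda)}}^{\alpha^\tau}(\tau_\Lambda(a)) \leq L_A(a)$.

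The right invariant case is treated analogously by replacing $\alpha^\tau$ with the induced left coaction $\beta^\tau$ from \autoref{lem:Induced-Action-on-Toeplitz-System}; the second equivariance identity in \eqref{eqn:Induced-coactions-on-compression} gives $\tau_\Lambda(a)_{(-1)} \otimes \tau_\Lambda(a)_{(0)} = a_{(0)} \otimes \tau_\Lambda(a_{(1)})$ (now in right-coaction Sweedler notation on $A$), and slicing the second component produces $a_{(0)} \mu(a_{(1)})$, on which right invariance of $L_A$ is directly available. For bi-invariant $L_A$ one combines the two estimates to bound $L_{A^{(\Lambda)}}^{\alpha^\tau,\beta^\tau}(\tau_\Lambda(a)) = \max\{L_{A^{(\Lambda)}}^{\alpha^\tau}(\tau_\Lambda(a)), L_{A^{(\Lambda)}}^{\beta^\tau}(\tau_\Lambda(a))\} \leq L_A(a)$.

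The only real subtlety is keeping the Sweedler index conventions straight: one must recognise that the induced Lip-norm formula \eqref{eqn:Induced-Lip-norm} for the \emph{right} coaction $\alpha^\tau$ slices the \emph{first} tensor factor of $\Delta(a)$ after transporting through the equivariance, which is precisely the slot controlled by \emph{left} invariance of $L_A$ on $A$. Once this bookkeeping is done, the estimate is a direct computation and no further ingredients are required beyond \autoref{lem:Induced-Action-on-Toeplitz-System} and the definitions.
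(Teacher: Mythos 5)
Your proposal is correct and follows essentially the same route as the paper: unpack the induced Lip-norm, use the equivariance $(\tau_\Lambda \otimes \mathbf{I}^A)\Delta = \alpha^\tau\tau_\Lambda$ to rewrite the sliced element as $\tau_\Lambda^*\phi(a_{(0)})a_{(1)}$, note that $\tau_\Lambda^*\phi$ is a state on $A$, and apply left invariance before taking the supremum (with the analogous argument via $\beta^\tau$ for the right/bi-invariant cases). Your Sweedler bookkeeping, including the observation that the right coaction's induced Lip-norm is controlled by \emph{left} invariance of $L_A$, matches the paper's computation exactly.
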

	
	\begin{proof}
		We already noted in \autoref{rmk:Compression-map-ucp-onto} that the compression map $\tau$ is ucp.
		For Lip-norm contractivity, observe that the following holds, for all $a \in A$:
		\begin{align}
			L_{A^{(\Lambda)}}^{\alpha^\tau}(\tau(a))
			&= \sup_{\phi \in \mathcal{S}(A^{(\Lambda)})} L_A(\phi(\tau(a)_{(0)})a_{(1)}) \\
			&= \sup_{\phi \in \mathcal{S}(A^{(\Lambda)})} L_A(\tau^*\phi(a_{(0)})a_{(1)}) \\
			&= \sup_{\tau^*\phi \in \tau^*\mathcal{S}(A^{(\Lambda)})} L_A(\tau^*\phi(a_{(0)})a_{(1)}) \\
			&\leq L_A(a),
		\end{align}
		by left invariance of the Lip-norm $L_A$, where we used that $\tau^* : \mathcal{S}(A^{(\Lambda)}) \rightarrow \mathcal{S}(A)$ is an injection.
	\end{proof}
	
	\begin{definition}
		Let $L_A$ be a bi-invariant regular Lip-norm on the function algebra $A$ and let $L_{A^{(\Lambda)}}^{\alpha^\tau,\beta^\tau}$ be the induced bi-invariant Lip-norm on the operator system $A^{(\Lambda)}$ as in \autoref{cor:Compression-is-a-CQMS}.
		We call the compact quantum metric space $(A^{(\Lambda)},	L_{A^{(\Lambda)}}^{\alpha^\tau,\beta^\tau})$ the (bi-invariant) \emph{Peter--Weyl truncation} of the compact quantum group $\mathbb{G}$.
	\end{definition}
	
	In order to compare the Peter--Weyl truncations $(A^{(\Lambda)},	L_{A^{(\Lambda)}}^{\alpha^\tau,\beta^\tau})$ with the original compact quantum metric space $(A,L_A)$ using the criterion in \autoref{prop:Criterion-qGH-comparison} we need morphisms $\Phi : A \rightarrow A^{(\Lambda)}$ and $\Psi : A^{(\Lambda)} \rightarrow A$ whose compositions approximate the respective identity maps on $A$ and $A^{(\Lambda)}$ in Lip-norm.
	We take the compression map $\tau : A \rightarrow A^{(\Lambda)}$ as the morphism $\Phi$, so that it remains to find an appropriate candidate for the map $\Psi$.
	In earlier works on compact quantum metric spaces \cite{Rie04}, see also \cite{Sai09}, \cite{KK22}, \cite{vS21}, \cite{LvS24} and many more, these maps were inspired by Berezin quantization \cite{Ber75}, see also e.g.\@ \cite{Lan99}.
	For our purposes, rather than working with the adjoint of the compression map $\tau$ for a certain choice of inner products on $A$ and $A^{(\Lambda)}$, we follow the approach taken in \cite{GEvS23} to give a whole family of candidates for maps $\Psi : A^{(\Lambda)} \rightarrow A$, which we then show to have a member that satisfies the assumptions of \autoref{prop:Criterion-qGH-comparison}.
	We keep the subset $\Lambda \subseteq \widehat{\mathbb{G}}$ fixed.
	
	\begin{definition}\label{def:Symbol-map}
		Let $\phi \in \mathcal{S}(A^{(\Lambda)})$ be any state.
		We denote the associated slice map by $\sigma_\Lambda^\phi : A^{(\Lambda)} \rightarrow A$, i.e.\@
		\begin{align}
			\sigma_\Lambda^\phi(x) := \phi(x_{(0)})x_{(1)} 
			= (\phi \otimes \mathbf{I}^A) \alpha^{\tau}(x),
		\end{align}
		for all $x \in A^{(\Lambda)}$.
		We call the map $\sigma_\Lambda^\phi$ a \emph{symbol map}.
	\end{definition}
	
	\begin{notation}
		As we did for the compression map $\tau$, we will drop the subindex $\Lambda$ of the symbol map $\sigma_\Lambda^\phi$, whenever convenient.
	\end{notation}
	
	\begin{lemma}\label{lem:Symbol-map-morphism}
		Let $L_A$ be a regular Lip-norm on the function algebra $A$ and let $L_{A^{(\Lambda)}}^{\alpha^\tau}$ be the induced Lip-norm on the operator system $A^{(\Lambda)}$.
		Then, for every state $\phi \in \mathcal{S}(A^{(\Lambda)})$, the symbol map $\sigma^\phi : A^{(\Lambda)} \rightarrow A$ is a morphism of compact quantum metric spaces.
		
		Analogous statements hold if the operator system $A^{(\Lambda)}$ is equipped with one of the induced Lip-norms $L_{A^{(\Lambda)}}^{\beta^\tau}$ or $L_{A^{(\Lambda)}}^{\alpha^\tau,\beta^\tau}$.
	\end{lemma}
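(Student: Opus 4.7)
The strategy is to verify the two defining properties of a morphism of compact quantum metric spaces, namely unital complete positivity and a Lip-norm bound, directly from the definition of $\sigma^\phi$. The argument turns out to be almost automatic: unital complete positivity is a consequence of the Fubini theorem (\autoref{lem:Fubini-for-cb-maps}) together with the fact that $\alpha^\tau$ is uci, and the Lip-norm bound is simply the tautology that one element of a supremum is bounded by the supremum.

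For unital complete positivity, I would factor the symbol map as the composition $\sigma^\phi = (\phi \otimes \mathbf{I}^A) \circ \alpha^\tau$. By \autoref{lem:Induced-Action-on-Toeplitz-System}, the coaction $\alpha^\tau : A^{(\Lambda)} \to A^{(\Lambda)} \otimes A$ is uci, in particular ucp. The state $\phi : A^{(\Lambda)} \to \mathbb{C}$ is ucp by definition, so \autoref{lem:Fubini-for-cb-maps} promotes the algebraic slice $\phi \odot \mathbf{I}^A$ to a ucp map $\phi \otimes \mathbf{I}^A : A^{(\Lambda)} \otimes A \to A$. The composition of two ucp maps is ucp, hence $\sigma^\phi$ is ucp.

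For Lip-norm contractivity with respect to the induced Lip-norm $L_{A^{(\Lambda)}}^{\alpha^\tau}$, I would unpack the definition (\ref{eqn:Induced-Lip-norm}) of Proposition \ref{prop:Li-Thm-1.4} with $X = A^{(\Lambda)}$ and $\alpha = \alpha^\tau$, which reads
\begin{align}
L_{A^{(\Lambda)}}^{\alpha^\tau}(x) = \sup_{\psi \in \mathcal{S}(A^{(\Lambda)})} L_A\bigl(\psi(x_{(0)})\, x_{(1)}\bigr),
\end{align}
for all $x \in A^{(\Lambda)}$, where I use Sweedler notation $\alpha^\tau(x) = x_{(0)} \otimes x_{(1)}$. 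Since $\phi$ is one particular element of $\mathcal{S}(A^{(\Lambda)})$, the expression $L_A(\phi(x_{(0)})x_{(1)}) = L_A(\sigma^\phi(x))$ is bounded above by the supremum on the right, so $L_A(\sigma^\phi(x)) \leq L_{A^{(\Lambda)}}^{\alpha^\tau}(x)$, giving Lip-norm contractivity with constant $C = 1$.

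The bi-invariant case follows from $L_{A^{(\Lambda)}}^{\alpha^\tau} \leq L_{A^{(\Lambda)}}^{\alpha^\tau,\beta^\tau}$ by definition of the maximum, so the same map $\sigma^\phi$ inherits the bound. For the statement involving $L_{A^{(\Lambda)}}^{\beta^\tau}$, I would introduce the analogous left symbol map $\tilde\sigma^\phi(x) := (\mathbf{I}^A \otimes \phi)\beta^\tau(x)$, which is ucp by the same Fubini argument applied to the uci left coaction $\beta^\tau$ from \autoref{lem:Induced-Action-on-Toeplitz-System}, and which satisfies $L_A(\tilde\sigma^\phi(x)) \leq L_{A^{(\Lambda)}}^{\beta^\tau}(x)$ by the left-coaction analogue of the supremum inequality above. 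I do not anticipate any real obstacle here; the only thing one has to keep in mind is that the appropriate ``chirality'' of symbol map has to be paired with the appropriate induced Lip-norm so that the defining supremum of the induced Lip-norm yields the bound as a tautology.
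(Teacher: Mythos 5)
Your proposal is correct and matches the paper's proof essentially verbatim: unital complete positivity comes from writing $\sigma^\phi = (\phi \otimes \mathbf{I}^A)\circ\alpha^\tau$ as a composition of the uci coaction with the ucp slice map, and Lip-norm contractivity is exactly the observation that $L_A(\phi(x_{(0)})x_{(1)})$ is one term of the supremum defining $L_{A^{(\Lambda)}}^{\alpha^\tau}$. Your explicit treatment of the $\beta^\tau$ and $\max$ cases (pairing the left symbol map with $L_{A^{(\Lambda)}}^{\beta^\tau}$, and using $L_{A^{(\Lambda)}}^{\alpha^\tau} \leq L_{A^{(\Lambda)}}^{\alpha^\tau,\beta^\tau}$) correctly fills in what the paper leaves as "analogous."
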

	
	\begin{proof}
		The symbol map $\sigma^\phi$, being the composition of the uci map $\alpha^\tau$ and the ucp map $\phi \otimes \mathbf{I}^A$, is ucp.
		Lip-norm contractivity of $\sigma^\phi$ follows from the definition of the induced Lip-norm:
		\begin{align}
			L_A(\sigma^\phi(x)) 
			= L_A(\phi(x_{(0)})x_{(1)}) 
			\leq \sup_{\psi \in \mathcal{S}(A^{(\Lambda)})} L_A(\psi(x_{(0)})x_{(1)}) 
			= L_{A^{(\Lambda)}}^{\alpha^\tau}(x)
		\end{align}
	\end{proof}
	
	Before we can apply \autoref{prop:Criterion-qGH-comparison}, we compute the compositions of the compression and symbol maps: 
	\begin{align}\label{eqn:Down-Up}
		\sigma^\phi\tau(a) = \phi(\tau(a)_{(0)}) \tau(a)_{(1)}
		= \tau^*\phi(a_{(0)}) a_{(1)},
	\end{align}
	for all $a \in A$, and
	\begin{align}\label{eqn:Up-Down}
		\tau\sigma^\phi(x)
		= \phi(x_{(0)}) \tau(x_{(1)})
		= \tau^*\phi(a_{(0)}) \tau(a_{(1)}),
	\end{align}
	for all $x \in A^{(\Lambda)}$ and $a \in A$ with $\tau(a) = x$, where we used (\ref{eqn:Induced-action-Sweedler}) in the last step.
	
	Recall that we are assuming that the compact quantum group $\mathbb{G}$ is coamenable with separable function algebra $A$.
	
	\begin{proposition}\label{prop:C1-Approximate-Order-Iso}
		Let $\phi \in \mathcal{S}(A^{(\Lambda)})$ be a state.
		Assume that $L_A$ is a regular Lipschitz seminorm on the function algebra $A$ with $\ker(L_A) = \mathbb{C}\mathbf{1}_A$.
		Let $L_A'$ be the induced right invariant regular Lipschitz seminorm on $A$ and let $L_{A^{(\Lambda)}}^{\beta^\tau}$ be the induced regular Lipschitz seminorm on $A^{(\Lambda)}$.
		Then the following inequalities hold:
		\begin{align}
			\lVert\sigma^\phi\tau(a)-a\rVert 
			&\leq 2 d^{L_A}(\tau^*\phi,\epsilon) L_A'(a),
		\end{align}
		and
		\begin{align}
			\lVert\tau\sigma^\phi(x)-x\rVert
			&\leq 2 d^{L_A}(\tau^*\phi,\epsilon) L_{A^{(\Lambda)}}^{\beta^\tau}(x),
		\end{align}
		for all elements $a \in A$ and $x \in A^{(\Lambda)}$, where we recall that $\epsilon \in \mathcal{S}(A)$ is the counit of the compact quantum group $\mathbb{G}$.
	\end{proposition}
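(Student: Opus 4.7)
The plan is to reduce both inequalities to the estimates already established in \autoref{lem:Lip-estimate-for-action-OS} (and its specialization to comultiplication, \autoref{cor:Lip-estimate-for-action-Alg}) by rewriting each difference as a slice of $\tau^*\phi-\epsilon$ against the appropriate coaction. The key observation is that $a$ and $x$ themselves can be reconstituted by slicing with the counit, so the differences $\sigma^\phi\tau(a)-a$ and $\tau\sigma^\phi(x)-x$ become the evaluations of $\tau^*\phi-\epsilon$ on suitable coaction-Sweedler expressions. Coamenability of $\mathbb{G}$ is crucial here because it is what makes $\epsilon$ a state on $A$ and in particular ensures that the counit property holds on $A^{(\Lambda)}$ via \autoref{lem:Counit-Property-Operator-System}.

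For the first inequality, I will start from (\ref{eqn:Down-Up}), which yields $\sigma^\phi\tau(a)=\tau^*\phi(a_{(0)})a_{(1)}$. Writing $a=\epsilon(a_{(0)})a_{(1)}$ by the counit property on $A$, we obtain
\begin{align}
\sigma^\phi\tau(a)-a = (\tau^*\phi-\epsilon)(a_{(0)})\,a_{(1)}.
\end{align}
This is exactly the expression controlled by the second inequality in \autoref{cor:Lip-estimate-for-action-Alg} when we relabel $a_{(0)}\otimes a_{(1)}$ as $a_{(-1)}\otimes a_{(0)}$ in the left-coaction convention. Taking $\mu=\tau^*\phi$ and $\nu=\epsilon$ gives the desired bound with the right-invariant Lipschitz seminorm $L_A'$.

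For the second inequality, I will use (\ref{eqn:Up-Down}) and the defining relation $(\mathbf{I}^A\otimes\tau)\Delta=\beta^\tau\tau$ from \autoref{lem:Induced-Action-on-Toeplitz-System}. For any $a\in A$ with $\tau(a)=x$,
\begin{align}
\tau\sigma^\phi(x)=\tau^*\phi(a_{(0)})\tau(a_{(1)})=(\tau^*\phi\otimes\mathbf{I}^{A^{(\Lambda)}})\beta^\tau(x)=\tau^*\phi(x_{(-1)})x_{(0)},
\end{align}
in the left-coaction Sweedler notation for $\beta^\tau$. The left-coaction analogue of \autoref{lem:Counit-Property-Operator-System}, which follows by the same argument from the $\beta^\tau$-coaction and counit properties together with injectivity of the uci map $\beta^\tau$, gives $x=\epsilon(x_{(-1)})x_{(0)}$. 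Subtracting,
\begin{align}
\tau\sigma^\phi(x)-x=(\tau^*\phi-\epsilon)(x_{(-1)})\,x_{(0)},
\end{align}
and the left-coaction half of \autoref{lem:Lip-estimate-for-action-OS}, applied to $\beta^\tau$ with $\mu=\tau^*\phi$, $\nu=\epsilon$, yields the inequality with constant $2d^{L_A}(\tau^*\phi,\epsilon)L_{A^{(\Lambda)}}^{\beta^\tau}(x)$.

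Neither step presents any substantial obstacle; the proof is essentially a bookkeeping exercise in Sweedler notation. The only subtlety is remembering the conventions: because $\sigma^\phi$ slices $\alpha^\tau$ on the left and we postcompose with $\tau$, the composition $\tau\sigma^\phi$ naturally acquires a left-coaction ($\beta^\tau$) description, which is why the right-invariant seminorm $L_A'$ appears in the first inequality but the seminorm $L_{A^{(\Lambda)}}^{\beta^\tau}$ coming from the left coaction appears in the second. I will be careful to invoke coamenability exactly where it is needed, namely to assert that $\epsilon$ is a state on $A$ so that $d^{L_A}(\tau^*\phi,\epsilon)$ is well-defined and to ensure the counit property on $A^{(\Lambda)}$.
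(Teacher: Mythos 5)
Your proof is correct and follows essentially the same route as the paper: the first inequality is handled identically via (\ref{eqn:Down-Up}) and \autoref{cor:Lip-estimate-for-action-Alg}, and your identification $\tau\sigma^\phi(x)=(\tau^*\phi\otimes\mathbf{I}^{A^{(\Lambda)}})\beta^\tau(x)$ together with the counit property for $\beta^\tau$ is exactly what the paper's computation encodes. The only cosmetic difference is that you invoke the left-coaction half of \autoref{lem:Lip-estimate-for-action-OS} applied directly to $\beta^\tau$, whereas the paper re-derives that estimate in place via the Kadison function representation and the Fubini theorem for slice maps; the content is the same.
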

	
	\begin{proof}
		The first inequality follows immediately from (\ref{eqn:Down-Up}) and  \autoref{cor:Lip-estimate-for-action-Alg}.
		Indeed, we have
		\begin{align}
			\lVert\sigma^\phi\tau(a)-a\rVert
			= \lVert\tau^*\phi(a_{(0)})a_{(1)} - \epsilon(a_{(0)})a_{(1)}\rVert
			\leq 2 d^{L_A}(\tau^*\phi,\epsilon) L_A'(a).
		\end{align}
		
		As for the second inequality, observe that, for all $a \in A$ with $\tau(a) = x$, we obtain the following, using (\ref{eqn:Up-Down}), the Kadison function representation and the Fubini theorem for slice maps:
		\begin{align}
			\lVert\tau\sigma^\phi(x) - x\rVert
			&= \lVert\tau^*\phi(a_{(0)})\tau(a_{(1)}) - \tau(a)\rVert \\
			&\leq 2 \sup_{\psi \in \mathcal{S}(A^{(\Lambda)})} \left|\psi \left(\tau^*\phi(a_{(0)})\tau(a_{(1)}) - \epsilon(a_{(0)})\tau(a_{(1)})\right)\right| \\
			&= 2 \sup_{\psi \in \mathcal{S}(A^{(\Lambda)})} \left|\tau^*\psi\left(\tau^*\phi(a_{(0)})a_{(1)} - \epsilon(a_{(0)})a_{(1)}\right)\right| \\
			&= 2 \sup_{\psi \in \mathcal{S}(A^{(\Lambda)})} | (\tau^*\phi - \epsilon) (a_{(0)} \tau^*\psi(a_{(1)})) | \\
			&\leq 2 \sup_{\psi \in \mathcal{S}(A^{(\Lambda)})} d^{L_A}(\tau^*\phi,\epsilon) L_A(a_{(0)} \tau^*\psi(a_{(1)})) \\
			&= 2 \sup_{\psi \in \mathcal{S}(A^{(\Lambda)})} d^{L_A}(\tau^*\phi,\epsilon) L_A(x_{(-1)} \psi(x_{(0)})) \\
			&= 2 d^{L_A}(\tau^*\phi,\epsilon) L_{A^{(\Lambda)}}^{\beta^\tau}(x).
		\end{align}
		Note that in the penultimate line we used that $a_{(0)} \otimes a_{(1)} = a_{(-1)} \otimes a_{(0)}$ and that $a_{(-1)} \tau^*\psi(a_{(0)}) = x_{(-1)} \psi(x_{(0)})$.
	\end{proof}
	
	\begin{corollary}\label{cor:Lip-norm-estimates-Up-Down-Down-Up}
		Assume that $L_A$ is a right invariant regular Lip-norm on the function algebra $A$ and let $L_{A^{(\Lambda)}}^{\beta^\tau}$ be the induced regular Lip-norm on $A^{(\Lambda)}$.
		Then for every positive real number $\varepsilon > 0$, there is a finite subset $\Lambda \subseteq \widehat{\mathbb{G}}$ and a state $\phi \in \mathcal{S}(A^{(\Lambda)})$ such that the following inequalities hold:
		\begin{align}
			\lVert\sigma_\Lambda^\phi\tau_\Lambda(a)-a\rVert 
			&\leq \varepsilon L_A(a), \text{ and } \\
			\lVert\tau_\Lambda\sigma_\Lambda^\phi(x)-x\rVert
			&\leq \varepsilon L_{A^{(\Lambda)}}^{\beta^\tau}(x),
		\end{align}
		for all $a \in A$ and $x \in A^{(\Lambda)}$.
	\end{corollary}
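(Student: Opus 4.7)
The plan is to deduce the corollary from \autoref{prop:C1-Approximate-Order-Iso} by selecting $\Lambda$ and $\phi$ so that the coefficient $2 d^{L_A}(\tau^*\phi,\epsilon)$ appearing in both inequalities of that proposition is bounded by $\varepsilon$. Since $L_A$ is assumed right invariant, the induced seminorm $L_A'$ coincides with $L_A$ by \autoref{rmk:Invariant-seminorms-coamenable-case}, so the first estimate of \autoref{prop:C1-Approximate-Order-Iso} reads $\lVert \sigma_\Lambda^\phi \tau_\Lambda(a) - a\rVert \leq 2 d^{L_A}(\tau^*\phi,\epsilon) L_A(a)$. Thus it suffices to find a finite $\Lambda \subseteq \widehat{\mathbb{G}}$ and a state $\phi \in \mathcal{S}(A^{(\Lambda)})$ satisfying $d^{L_A}(\tau_\Lambda^*\phi, \epsilon) \leq \varepsilon/2$.

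The first step is to observe that the counit $\epsilon : A \to \mathbb{C}$ is a state, which uses coamenability of $\mathbb{G}$ as recorded at the end of \autoref{sec:Compact-quantum-groups}. The second step is to bring in \autoref{lem:Weak*-Density-Of-Liftable-States}, applied to the operator system $X = A \subseteq \mathcal{B}(H)$ and the net of projections $(P_\Lambda)_{\Lambda \subseteq \widehat{\mathbb{G}} \text{ finite}}$. Because the Peter--Weyl decomposition of $H = \mathrm{L}^2(\mathbb{G})$ exhibits $\sum_\Lambda H_\Lambda$ as dense in $H$ and the finite-subset net is a join semilattice, the hypotheses of that lemma are satisfied, so $\mathcal{S}_\mathcal{L} = \bigcup_\Lambda \tau_\Lambda^*\mathcal{S}(A^{(\Lambda)})$ is weak$^*$-dense in $\mathcal{S}(A)$.

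The third step is to translate weak$^*$-approximation into approximation in the Monge--Kantorovich distance. Since $L_A$ is a Lip-norm, $d^{L_A}$ metrizes the weak$^*$ topology on $\mathcal{S}(A)$ by definition. Hence, given $\varepsilon > 0$, the weak$^*$ density of $\mathcal{S}_\mathcal{L}$ supplies a finite $\Lambda$ and a state $\phi \in \mathcal{S}(A^{(\Lambda)})$ with $d^{L_A}(\tau_\Lambda^*\phi,\epsilon) \leq \varepsilon/2$. Substituting this choice into both inequalities of \autoref{prop:C1-Approximate-Order-Iso} yields the claimed bounds.

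No serious obstacle arises; the argument is essentially a packaging of the previous proposition with the density lemma. The only mild subtlety is the need to know $L_A = L_A'$, which uses coamenability via \autoref{rmk:Invariant-seminorms-coamenable-case}, and the verification that the net of Peter--Weyl projections over finite $\Lambda \subseteq \widehat{\mathbb{G}}$ satisfies the join semilattice and strong convergence hypotheses of \autoref{lem:Weak*-Density-Of-Liftable-States}, both of which are immediate from the Peter--Weyl decomposition recalled in \eqref{eqn:Peter--Weyl-decomposition}.
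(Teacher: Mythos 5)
Your proposal is correct and follows essentially the same route as the paper: combine \autoref{prop:C1-Approximate-Order-Iso} with $L_A = L_A'$ (right invariance plus coamenability), the fact that $d^{L_A}$ metrizes the weak$^*$ topology, and the weak$^*$ density of liftable states from \autoref{lem:Weak*-Density-Of-Liftable-States} to make $2d^{L_A}(\tau_\Lambda^*\phi,\epsilon)\leq\varepsilon$. Your explicit verification of the join-semilattice and strong-convergence hypotheses for the finite-subset net is a welcome detail the paper leaves implicit.
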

	
	\begin{proof}
		Using \autoref{prop:C1-Approximate-Order-Iso} together with the fact that $L_A = L_A'$ by right invariance, the claim follows from the assumption that $d^{L_A}$ metrizes the weak$^*$ topology on $\mathcal{S}(A)$, together with the weak$^*$ density of the subset of liftable states in $\mathcal{S}(A)$ as in \autoref{lem:Weak*-Density-Of-Liftable-States}. 
	\end{proof}
	
	\begin{theorem}\label{thm:Convergence-PW-truncations}
		Let $\mathbb{G}$ be a coamenable compact quantum group with separable function algebra $A = \mathrm{C}(\mathbb{G})$.
		Let $\mathcal{L} \subseteq \widehat{\mathbb{G}}$ be a net such that
		the induced net of projections $P_\Lambda$ onto $H_\Lambda := \bigoplus_{\gamma \in \Lambda} H_\gamma \otimes \overline{H_\gamma}$, for $\Lambda \in \mathcal{L}$, is a join semilattice and converges strongly to the identity on the Hilbert space $H := \mathrm{L}^2(\mathbb{G})$.
		Assume that $L_A$ is a bi-invariant regular Lip-norm on $A$ and denote by $L_{A^{(\Lambda)}}^{\alpha^\tau,\beta^\tau}$ the induced bi-invariant Lip-norm on the operator system $A^{(\Lambda)}$.
		Then the net of Peter--Weyl truncations $(A^{(\Lambda)},L_{A^{(\Lambda)}}^{\alpha^\tau,\beta^\tau})_{\Lambda \in \mathcal{L}}$ converges in operator Gromov--Hausdorff distance, i.e.
		\begin{align}
			\lim_{\Lambda \in \mathcal{L}} \mathrm{dist}^\mathrm{op}\left((A^{(\Lambda)},L_{A^{(\Lambda)}}^{\alpha^\tau,\beta^\tau}),(A,L_A)\right) = 0.
		\end{align}
	\end{theorem}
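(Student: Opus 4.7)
The plan is to verify the hypotheses of \autoref{prop:Criterion-qGH-comparison} for each sufficiently large $\Lambda \in \mathcal{L}$ with the maps
\[
\Phi := \tau_\Lambda : A \to A^{(\Lambda)}, \qquad \Psi := \sigma_\Lambda^{\phi_\Lambda} : A^{(\Lambda)} \to A,
\]
for a state $\phi_\Lambda \in \mathcal{S}(A^{(\Lambda)})$ to be chosen carefully, and then to convert the resulting bound from complete Gromov--Hausdorff distance to operator Gromov--Hausdorff distance via \autoref{prop:Equality-complete-operator-GH-distance}.

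The morphism properties are handed to us: since $L_A$ is bi-invariant, \autoref{lem:Compression-map-morphism} and \autoref{lem:Symbol-map-morphism} give Lip-norm contractivity of both $\tau_\Lambda$ and $\sigma_\Lambda^{\phi_\Lambda}$ between $(A,L_A)$ and $(A^{(\Lambda)}, L_{A^{(\Lambda)}}^{\alpha^\tau,\beta^\tau})$. Thus I may take $C_\Phi = C_\Psi = 1$ in \autoref{prop:Criterion-qGH-comparison}, whereupon the diameter terms vanish and the output reduces to $\max\{\varepsilon_X, \varepsilon_Y\}$, where $\varepsilon_X, \varepsilon_Y$ control the deviations of $\Psi\Phi$ and $\Phi\Psi$ from the respective identities in Lip-norm.

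To make these deviations simultaneously small, fix $\varepsilon > 0$. \autoref{prop:C1-Approximate-Order-Iso} reduces both required inequalities to the single Monge--Kantorovich estimate $d^{L_A}(\tau_\Lambda^*\phi_\Lambda, \epsilon) < \varepsilon/2$, after using $L_A = L_A'$ (bi-invariance, via \autoref{rmk:Invariant-seminorms-coamenable-case}) and $L_{A^{(\Lambda)}}^{\beta^\tau} \leq L_{A^{(\Lambda)}}^{\alpha^\tau,\beta^\tau}$. By \autoref{lem:Weak*-Density-Of-Liftable-States} applied to the net $(P_\Lambda)_{\Lambda \in \mathcal{L}}$ and the fact that $d^{L_A}$ metrizes the weak$^*$ topology on $\mathcal{S}(A)$, there exist $\Lambda_0 \in \mathcal{L}$ and $\phi_0 \in \mathcal{S}(A^{(\Lambda_0)})$ with $d^{L_A}(\tau_{\Lambda_0}^*\phi_0, \epsilon) < \varepsilon/2$. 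For any $\Lambda \geq \Lambda_0$ in $\mathcal{L}$, the ucp restriction $\tau_{\Lambda_0}|_{A^{(\Lambda)}} : A^{(\Lambda)} \to A^{(\Lambda_0)}$ appearing in the proof of \autoref{lem:Weak*-Density-Of-Liftable-States} allows me to set $\phi_\Lambda := (\tau_{\Lambda_0}|_{A^{(\Lambda)}})^* \phi_0$, and a direct verification using $P_{\Lambda_0} P_\Lambda = P_{\Lambda_0}$ yields $\tau_\Lambda^* \phi_\Lambda = \tau_{\Lambda_0}^* \phi_0$. The required Monge--Kantorovich estimate then holds uniformly along the cofinal set $\{\Lambda \in \mathcal{L} : \Lambda \geq \Lambda_0\}$.

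Feeding this $\phi_\Lambda$ into \autoref{prop:C1-Approximate-Order-Iso} gives $\varepsilon_X = \varepsilon_Y = \varepsilon$, so \autoref{prop:Criterion-qGH-comparison} together with \autoref{cor:Complete-GH-distance-bounds-all-others} yields $\mathrm{dist}^{\mathrm{op}}((A^{(\Lambda)}, L_{A^{(\Lambda)}}^{\alpha^\tau,\beta^\tau}),(A,L_A)) \leq \varepsilon$ for all $\Lambda \geq \Lambda_0$, and the convergence follows. The main obstacle I anticipate is exactly this uniformity step: \autoref{lem:Weak*-Density-Of-Liftable-States} by itself only supplies a single good $\Lambda_0$, and propagating the estimate to a cofinal subset requires the state-lifting construction $\phi_0 \mapsto \phi_\Lambda$ outlined above, which is where the join-semilattice hypothesis on $\mathcal{L}$ enters essentially.
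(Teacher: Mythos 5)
Your proposal is correct and follows essentially the same route as the paper's proof: compression map $\tau_\Lambda$ and symbol map $\sigma_\Lambda^\phi$ for a liftable state close to the counit, the criterion of \autoref{prop:Criterion-qGH-comparison} with $C_\Phi = C_\Psi = 1$, and \autoref{prop:Equality-complete-operator-GH-distance} to pass to $\mathrm{dist}^{\mathrm{op}}$. The only difference is that you spell out the propagation of the estimate from a single $\Lambda_0$ to all $\Lambda \geq \Lambda_0$ via $\phi_\Lambda := (\tau_{\Lambda_0}|_{A^{(\Lambda)}})^*\phi_0$ and $\tau_\Lambda^*\phi_\Lambda = \tau_{\Lambda_0}^*\phi_0$, a point the paper leaves implicit in \autoref{cor:Lip-norm-estimates-Up-Down-Down-Up} and the proof of \autoref{lem:Weak*-Density-Of-Liftable-States}.
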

	
	\begin{proof}
		By \autoref{lem:Compression-map-morphism} and \autoref{lem:Symbol-map-morphism}, the compression map $\tau : A \rightarrow A^{(\Lambda)}$ and the symbol maps $\sigma^\phi : A^{(\Lambda)} \rightarrow A$ are morphisms of compact quantum metric spaces.
		By \autoref{cor:Lip-norm-estimates-Up-Down-Down-Up}, their compositions approximate the respective identity maps on $A$ and $A^{(\Lambda)}$ in Lip-norm.
		Thus, by \autoref{prop:Criterion-qGH-comparison}, we obtain convergence $(A^{(\Lambda)},L_{A^{(\Lambda)}}^{\alpha^\tau,\beta^\tau}) \rightarrow (A,L_A)$ in complete Gromov--Hausdorff distance which, by \autoref{prop:Equality-complete-operator-GH-distance} is equivalent to convergence in operator Gromov--Hausdorff distance.
	\end{proof}
	
	By \autoref{cor:Complete-GH-distance-bounds-all-others}, the same convergence result holds also in the distances $\mathrm{dist}_\mathrm{GH}$, $\mathrm{dist}_\mathrm{GH}^\mathrm{q}$, $\mathrm{dist}_n^\mathrm{s}$, for all $n \in \mathbb{N}$, and $\mathrm{dist}^\mathrm{s}$.

	\subsection{The case of a compact group}
	
	We compare our setup with the compact group case as in \cite{GEvS23}.
	To this end, let $G$ be a second countable compact group with a bi-invariant metric $d$, i.e.\@ $d(gh,gp) = d(hg,pg) = d(h,p)$, for all elements $g,h,p \in G$.
	Recall that we denote the comultiplication $\Delta : \mathrm{C}(G) \rightarrow \mathrm{C}(G \times G)$, $\Delta(f)(g,h) := f(gh)$, by $\alpha$ or $\beta$ whenever considered as a right or left coaction respectively.
	Recall furthermore that the Lipschitz constant $\mathrm{Lip}_d$ is a Lip-norm on $\mathrm{C}(G)$ and observe that it is bi-invariant in the sense of Li.
	Indeed, for any function $f \in \mathrm{C}(G)$ and state $\mu \in \mathcal{S}(\mathrm{C}(G))$ (i.e.\@ $\mu$ is a probability measure on $G$), we have
	\begin{align}
		\mathrm{Lip}_d((\mathbf{I}^{\mathrm{C}(G)} \otimes \mu)\alpha(f))
		&= \mathrm{Lip}_d \left(g \mapsto \int_G f(gh) \mathrm{d}\mu(h) \right) \\
		&= \sup_{g,p \in G} \frac{|\int_G f(gh) - f(ph) \mathrm{d}\mu(h)|}{d(g,p)} \\
		&\leq \int_G \sup_{g,p \in G} \frac{|f(gh)-f(ph)|}{d(g,p)} \mathrm{d}\mu(h) \\
		&\leq \mathrm{Lip}_d(f),
	\end{align}
	by right invariance of the metric $d$ and the fact that $\mu$ is a probability measure on $G$.
	Similarly, one shows that $\mathrm{Lip}_d((\mu \otimes \mathbf{I}^{\mathrm{C}(G)})\beta(f)) \leq \mathrm{Lip}_d(f)$.
	
	Denote by $U$ and $V$ the respective left and right regular representation of $G$ on the Hilbert space $\mathrm{L}^2(G)$, given by $U_g\xi(h) := \xi(g^{-1}h)$ and $V_g\xi(h) := \xi(hg)$ respectively, for all elements $g,h \in G$ and $\xi \in \mathrm{L}^2(G)$. 
	We write $\lambda, \rho$ for the strong$^*$-continuous left and right $G$-actions on $\mathcal{B}(\mathrm{L}^2(G))$ by conjugation with the left and right regular representation respectively, i.e.\@ $\lambda_g(T) := U_gTU_g^*$ and $\rho_g(T) := V_gTV_g^*$, for all elements $g \in G$ and operators $T \in \mathcal{B}(\mathrm{L}^2(G))$.
	It is straightforward to check that $\lambda_g(f)(h) = f(g^{-1}h)$ and $\rho_g(f)(h) = f(hg)$, for all elements $g,h \in G$ and functions $f \in \mathrm{C}(G)$ viewed as operators on the Hilbert space $\mathrm{L}^2(G)$ by pointwise multiplication.
	For all $T \in \mathcal{B}(\mathrm{L}^2(G))$, the authors of \cite{GEvS23} set
	\begin{align}\label{eqn:GEvS-Lip-norm}
		\lVert T\rVert_\lambda := \sup_{g \in G\setminus\{e\}} \frac{\lVert \lambda_g(T)-T\rVert}{d(g,e)}, \quad
		\lVert T\rVert_\rho := \sup_{g \in G\setminus\{e\}} \frac{\lVert\rho_g(T)-T\rVert}{d(g,e)},
	\end{align}
	and
	\begin{align}
		\lVert T \rVert_{\lambda,\rho} := \max\{\lVert T\rVert_\lambda, \lVert T\rVert_\rho\}.
	\end{align}
	It is straightforward to check that the Lipschitz constant of a function $f \in \mathrm{C}(G)$ coincides with the Lipschitz constants of the $\mathrm{C}(G)$-valued functions $g \mapsto \lambda_g(f)$ and $g \mapsto \rho_g(f)$, i.e.\@
	\begin{align}
		\mathrm{Lip}_d(f) = \lVert f \rVert_\lambda = \lVert f \rVert_\rho = \lVert f \rVert_{\lambda,\rho},
	\end{align}
	for all $f \in \mathrm{C}(G)$.
	
	Let $\Lambda \subseteq \widehat{G}$ be a set of equivalence classes of finite dimensional irreducible unitary representations of $G$ and let $P : \mathrm{L}^2(G) \rightarrow \bigoplus_{\gamma \in \Lambda} H_\gamma \otimes \overline{H_\gamma}$ be the associated orthogonal projection.
	The actions $\lambda$, $\rho$ commute with the compression map $\tau : \mathcal{B}(\mathrm{L}^2(G)) \ni T \mapsto PTP \in \mathcal{B}(P\mathrm{L}^2(G))$, so that we obtain $G$-actions on the operator system $P\mathrm{C}(G)P$ which we still denote by $\lambda$ and $\rho$ respectively.
	Note that, respectively being the composition of a norm-continuous $G$-action on $\mathrm{C}(G)$ and the compression map, these actions are norm-continuous.
	
	Denote by $\mathrm{Lip}^{\alpha^\tau,\beta^\tau}_{P\mathrm{C}(G)P}$ the bi-invariant Lip-norm on $P\mathrm{C}(G)P$ induced from the Lipschitz constant $\mathrm{Lip}_d$ by the coactions $\alpha^\tau : P\mathrm{C}(G)P \rightarrow P\mathrm{C}(G)P \otimes \mathrm{C}(G)$ and $\beta^\tau : P\mathrm{C}(G)P \rightarrow \mathrm{C}(G) \otimes P\mathrm{C}(G)P$ from \autoref{lem:Induced-action} in the sense of Li.
	I.e.
	\begin{align}
		\mathrm{Lip}^{\alpha^\tau,\beta^\tau}_{P\mathrm{C}(G)P}(x)
		:= \max\left\{\sup_{\phi \in \mathcal{S}(P\mathrm{C}(G)P)} \mathrm{Lip}_d (\phi(\alpha^\tau_\bullet(x))), \sup_{\phi \in \mathcal{S}(P\mathrm{C}(G)P)} \mathrm{Lip}_d(\phi(\beta^\tau_\bullet(x)))\right\},
	\end{align}
	for all $x \in P\mathrm{C}(G)P$.
	This Lip-norm is equivalent to the seminorm $\lVert \cdot \rVert_{\lambda,\rho}$ on $P\mathrm{C}(G)P$:
	
	\begin{lemma}\label{lem:Equivalence-induced-operator-seminorm}
		For all $x \in P\mathrm{C}(G)P$, the following holds:
		\begin{align}\label{eqn:equivalence-induced-and-left-seminorm}
			\frac{1}{2}\lVert x\rVert_{\lambda,\rho}
			\leq \mathrm{Lip}^{\alpha^\tau,\beta^\tau}_{P\mathrm{C}(G)P}(x)
			\leq \lVert x\rVert_{\lambda,\rho}.
		\end{align}
	\end{lemma}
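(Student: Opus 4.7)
The plan is to reduce everything to explicit formulas for the slices of $\alpha^\tau$ and $\beta^\tau$ through a state, and then to play Lipschitz estimates against the Kadison function representation (\autoref{lem:Kadison-function-representation}).

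\textbf{Step 1: Identify the slices.} The first observation is that the compression map $\tau$ is equivariant for both $\lambda$ and $\rho$, since the multiplicative unitaries commute with $P \otimes \mathbf{I}^H$ and $\mathbf{I}^H \otimes P$, hence $V_g P = P V_g$ and $U_g P = P U_g$. I would then compute, for $x = \tau(f) \in P\mathrm{C}(G)P$ and a state $\phi \in \mathcal{S}(P\mathrm{C}(G)P)$, the value of $(\phi \otimes \mathrm{ev}_g)\alpha^\tau(x)$. Using (\ref{eqn:Induced-coactions-on-compression}) together with $\alpha(f)(h,g) = f(hg) = \rho_g(f)(h)$, one gets $((\phi \otimes \mathbf{I}^A)\alpha^\tau(x))(g) = \phi(\rho_g(x))$. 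An analogous computation for $\beta^\tau$ gives $((\mathbf{I}^A \otimes \phi)\beta^\tau(x))(g) = \phi(\lambda_{g^{-1}}(x))$.

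\textbf{Step 2: Upper bound.} For fixed $x$ and fixed state $\phi$, bi-invariance of $d$ together with the fact that $\rho_g$ is an isometric $^*$-automorphism gives
\begin{align}
\lVert \rho_g(x) - \rho_h(x) \rVert = \lVert \rho_{h^{-1}g}(x) - x \rVert, \qquad d(g,h) = d(h^{-1}g, e),
\end{align}
so the supremum defining $\mathrm{Lip}_d(g \mapsto \phi(\rho_g(x)))$ is bounded by $\sup_{k \neq e} \lVert \rho_k(x) - x\rVert / d(k,e) = \lVert x\rVert_\rho$. The analogous computation for $\beta^\tau$ uses that $g \mapsto g^{-1}$ preserves the bi-invariant metric, giving $\mathrm{Lip}_d((\mathbf{I}^A \otimes \phi)\beta^\tau(x)) \leq \lVert x\rVert_\lambda$. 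Taking suprema over $\phi$ and the max over the two coactions yields $\mathrm{Lip}^{\alpha^\tau,\beta^\tau}_{P\mathrm{C}(G)P}(x) \leq \lVert x \rVert_{\lambda,\rho}$.

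\textbf{Step 3: Lower bound.} For each $g \neq e$, the element $\rho_g(x) - x$ lies in $P\mathrm{C}(G)P$, so \autoref{lem:Kadison-function-representation} gives
\begin{align}
\lVert \rho_g(x) - x\rVert \leq 2 \sup_{\phi \in \mathcal{S}(P\mathrm{C}(G)P)} \lvert \phi(\rho_g(x)) - \phi(\rho_e(x)) \rvert.
\end{align}
The right-hand side is bounded above by $2 \sup_\phi \mathrm{Lip}_d(g \mapsto \phi(\rho_g(x))) \cdot d(g,e)$. Dividing by $d(g,e)$ and taking the supremum over $g$ yields $\lVert x \rVert_\rho \leq 2 \sup_\phi \mathrm{Lip}_d((\phi \otimes \mathbf{I}^A)\alpha^\tau(x))$. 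The analogous estimate for $\lVert x\rVert_\lambda$ uses $\beta^\tau$. Taking the maximum gives $\tfrac{1}{2} \lVert x\rVert_{\lambda,\rho} \leq \mathrm{Lip}^{\alpha^\tau,\beta^\tau}_{P\mathrm{C}(G)P}(x)$.

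The main bookkeeping obstacle is in Step 1, namely making sure that the left versus right conventions for the coactions $\alpha^\tau, \beta^\tau$ match the left versus right actions $\lambda, \rho$ correctly — in particular, the appearance of the inversion $g \mapsto g^{-1}$ in the $\beta^\tau$ slice, which then has to be absorbed by bi-invariance of $d$. Once this identification is set up and equivariance of $\tau$ is recorded, the remainder is a routine Kadison-plus-Lipschitz estimate with no real difficulty.
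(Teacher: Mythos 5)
Your proposal is correct and follows essentially the same route as the paper: identify the slices of $\alpha^\tau$ and $\beta^\tau$ with the translated elements $\rho_g(x)$ and $\lambda_{g^{-1}}(x)$ via equivariance of $\tau$, then combine the Kadison function representation (source of the factor $2$) with bi-invariance of $d$. The only difference is organizational — the paper routes the estimate through the identity $\mathrm{Lip}_d(f)=\lVert f\rVert_\lambda$ and the intermediate quantity $\lVert\alpha^\tau_\bullet(x)\rVert_\lambda$, while you prove the upper and lower bounds directly — which is not a substantive deviation.
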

	
	\begin{proof}
		Let $x \in P\mathrm{C}(G)P$.
		We show that
		\begin{align}\label{eqn:Lip-rho-alpha}
			\frac{1}{2} \lVert x \rVert_\rho 
			\leq \mathrm{Lip}^{\alpha^\tau}_{P\mathrm{C}(G)P}(x)
			\leq \lVert x \rVert_\rho,
		\end{align}
		where $\mathrm{Lip}^{\alpha^\tau}_{P\mathrm{C}(G)P}(x) := \sup_{\phi \in \mathcal{S}(P\mathrm{C}(G)P)} \mathrm{Lip}_d (\phi(\alpha^\tau_\bullet(x)))$.
		To this end, using the fact that $\mathrm{Lip}_d(f) = \lVert f \rVert_\lambda$, for all $f \in \mathrm{C}(G)$, we have:
		\begin{align}
			\mathrm{Lip}^{\alpha^\tau}_{P\mathrm{C}(G)P}(x)
			&= \sup_{\phi \in \mathcal{S}(P\mathrm{C}(G)P)} \lVert \phi(\alpha^\tau_\bullet(x)) \rVert_\lambda \\
			&= \sup_{\phi \in \mathcal{S}(P\mathrm{C}(G)P)} \sup_{g \in G \setminus \{e\}} \frac{\lVert \lambda_g(\phi(\alpha^\tau_\bullet(x))) - \phi(\alpha^\tau_\bullet(x)) \rVert_\infty}{d(g,e)} \\
			&= \sup_{\phi \in \mathcal{S}(P\mathrm{C}(G)P)} \sup_{g \in G \setminus \{e\}} \sup_{h \in G} \frac{\lvert \phi(\alpha^\tau_{g^{-1}h}(x)) - \phi(\alpha^\tau_h(x)) \rvert}{d(g,e)}
		\end{align}
		By the Kadison function representation, this last quantity is bounded below and above by respectively $\frac{1}{2}$ and $1$ times 
		\begin{align}
			\sup_{g \in G \setminus \{e\}} \sup_{h \in G} \frac{\lVert \alpha^\tau_{g^{-1}h}(x) - \alpha^\tau_h(x)\rVert}{d(g,e)}
			= \lVert \alpha^\tau_\bullet(x) \rVert_\lambda.
		\end{align}
		Note that, for all $g, h \in G$, $x \in P\mathrm{C}(G)P$ and $f \in \mathrm{C}(G)$ with $\tau(f) = x$, we have
		\begin{align}
			\alpha_{g^{-1}h}(x)
			&= ((\tau \otimes \mathbf{I}^{\mathrm{C}(G)})\Delta(f))(g^{-1}h) \\
			&= \tau(p \mapsto f(pg^{-1}h)) \\
			&= \tau(p \mapsto \rho_{g^{-1}h}(f)(p)) \\
			&= \rho_{g^{-1}h}(x).
		\end{align}
		This implies that 
		\begin{align}
			\lVert \alpha^\tau_\bullet(x) \rVert_\lambda
			= \sup_{g \in G \setminus \{e\}} \sup_{h \in G} \frac{\lVert \rho_{g^{-1}h}(x) - \rho_h(x)\rVert}{d(g,e)}
			= \|x\|_\rho,
		\end{align}
		by invariance of the metric $d$.
		Altogether we obtain (\ref{eqn:Lip-rho-alpha}).
		
		Similarly, we can show 
		\begin{align}
			\frac{1}{2} \lVert x \rVert_\lambda 
			\leq \mathrm{Lip}^{\beta^\tau}_{P\mathrm{C}(G)P}(x)
			\leq \lVert x \rVert_\lambda.
		\end{align}
		Together with (\ref{eqn:Lip-rho-alpha}) this yields the claim.
	\end{proof}
	
	We now obtain \cite[Theorem 16]{GEvS23} as a corollary of our \autoref{thm:Convergence-PW-truncations}:
	
	\begin{corollary}
		Let $G$ be a compact group and let $\mathcal{L} \subseteq \widehat{G}$ be a net of finite dimensional irreducible unitary representations such that the induced net of projections $P_\Lambda$ onto $H_\Lambda := \bigoplus_{\gamma \in \Lambda} H_\gamma \otimes \overline{H_\gamma}$, for $\Lambda \in \mathcal{L}$, is a join semilattice and converges strongly to the identity on the Hilbert space $H := \mathrm{L}^2(G)$.
		Assume that the group $G$ is equipped with a bi-invariant metric $d$.
		Then the net of compact metric spaces $(\mathcal{S}(P_\Lambda \mathrm{C}(G) P_\Lambda),d^{\lVert\cdot\rVert_{\lambda,\rho}})$ converges to the compact metric space $(\mathcal{S}(\mathrm{C}(G)),d^{\mathrm{Lip}})$ in Gromov--Hausdorff distance.
	\end{corollary}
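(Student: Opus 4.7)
The plan is to derive the corollary by re-running the proof of \autoref{thm:Convergence-PW-truncations} with the seminorm $\lVert\cdot\rVert_{\lambda,\rho}$ in place of the Li-induced Lip-norm $L_{A^{(\Lambda)}}^{\alpha^\tau,\beta^\tau}$ on the truncation $A^{(\Lambda)} := P_\Lambda \mathrm{C}(G) P_\Lambda$, rather than citing the theorem as a black box. The reason is that although \autoref{lem:Equivalence-induced-operator-seminorm} shows the two seminorms are $2$-equivalent, bi-Lipschitz equivalence of Lip-norms only produces bi-Lipschitz equivalent metrics on the state space, which does not imply vanishing Gromov--Hausdorff distance between the corresponding metric structures on $\mathcal{S}(A^{(\Lambda)})$.

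Concretely, I would set up the input data as follows: $G$ is a coamenable compact quantum group with separable function algebra (by second countability of $G$), $\mathrm{Lip}_d$ is a bi-invariant regular Lip-norm on $\mathrm{C}(G)$ agreeing with $\lVert\cdot\rVert_{\lambda,\rho}$ there, and $\lVert\cdot\rVert_{\lambda,\rho}$ is itself a Lip-norm on $A^{(\Lambda)}$ by combining \autoref{lem:Total-boundedness} with \autoref{lem:Equivalence-induced-operator-seminorm} (since $\overline{\mathrm{B}}_1^{\lVert\cdot\rVert,\lVert\cdot\rVert_{\lambda,\rho}}$ sits inside $\overline{\mathrm{B}}_1^{\lVert\cdot\rVert,L_{A^{(\Lambda)}}^{\alpha^\tau,\beta^\tau}}$, which is totally bounded by \autoref{cor:Compression-is-a-CQMS}). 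The key technical step is to show that the compression map $\tau_\Lambda$ and the symbol map $\sigma_\Lambda^\phi$ form a bridge with contraction constants $C_\Phi = C_\Psi = 1$ with respect to these Lip-norms. For $\tau_\Lambda$, I would use that the conjugation actions $\lambda,\rho$ on $\mathcal{B}(\mathrm{L}^2(G))$ commute with compression (since the multiplicative unitaries preserve $H_\Lambda\otimes H$ and $H\otimes H_\Lambda$ respectively), which yields $\lVert\tau_\Lambda(f)\rVert_{\lambda,\rho}\leq\lVert f\rVert_{\lambda,\rho}=\mathrm{Lip}_d(f)$. For $\sigma_\Lambda^\phi$, the identity $\sigma_\Lambda^\phi(x)(h)=\phi(\rho_h(x))$ established within the proof of \autoref{lem:Equivalence-induced-operator-seminorm} together with bi-invariance of $d$ gives $\mathrm{Lip}_d(\sigma_\Lambda^\phi(x))\leq\lVert x\rVert_\rho\leq\lVert x\rVert_{\lambda,\rho}$.

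For the composition estimates, \autoref{prop:C1-Approximate-Order-Iso} directly supplies $\lVert\sigma_\Lambda^\phi\tau_\Lambda(f)-f\rVert\leq 2 d^{\mathrm{Lip}}(\tau_\Lambda^*\phi,\epsilon)\,\mathrm{Lip}_d(f)$ and an analogous bound for $\lVert\tau_\Lambda\sigma_\Lambda^\phi(x)-x\rVert$ in terms of $L_{A^{(\Lambda)}}^{\beta^\tau}(x)$, which via \autoref{lem:Equivalence-induced-operator-seminorm} is absorbed into $\lVert x\rVert_{\lambda,\rho}$ at the cost of a harmless multiplicative constant. \autoref{lem:Weak*-Density-Of-Liftable-States}, combined with the fact that $d^{\mathrm{Lip}}$ metrizes the weak$^*$-topology on $\mathcal{S}(\mathrm{C}(G))$, then lets one pick $\phi_\Lambda\in\mathcal{S}(A^{(\Lambda)})$ with $d^{\mathrm{Lip}}(\tau_\Lambda^*\phi_\Lambda,\epsilon)\to 0$ along the net, and \autoref{prop:Criterion-qGH-comparison} with $C_\Phi=C_\Psi=1$ delivers $\mathrm{dist}^{\mathrm{s}}\to 0$, whence $\mathrm{dist}_{\mathrm{GH}}\to 0$ by \autoref{cor:Complete-GH-distance-bounds-all-others}. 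The main difficulty is resisting the temptation to apply \autoref{thm:Convergence-PW-truncations} directly; the workaround is precisely the observation that the bridge $(\tau_\Lambda,\sigma_\Lambda^\phi)$ is strictly $1$-contractive for $\lVert\cdot\rVert_{\lambda,\rho}$ (no factor of $2$ appears in the contractivity constants), so that the factor of $2$ from \autoref{lem:Equivalence-induced-operator-seminorm} only infects the error terms $\varepsilon_X,\varepsilon_Y$, which vanish along the net, rather than the diameter term $\mathrm{diam}\cdot|1-1/C|$, which would otherwise survive.
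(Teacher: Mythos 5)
Your proposal is correct, but it takes a different route than the paper, which derives the corollary in two lines: \autoref{thm:Convergence-PW-truncations} gives convergence for the Li-induced Lip-norms, and this is transferred to $\lVert\cdot\rVert_{\lambda,\rho}$ via \autoref{lem:Equivalence-induced-operator-seminorm} and \autoref{cor:Complete-GH-distance-bounds-all-others}. Your worry that a mere $2$-equivalence of Lip-norms cannot force vanishing Gromov--Hausdorff distance is reasonable in general, but the paper's transfer is legitimate because, as noted in the remark immediately following the corollary, the two seminorms actually \emph{coincide} on $(P\mathrm{C}(G)P)_{\mathrm{sa}}$, and every distance in play ($d^{L}$ on states, admissibility of Lip-norms on direct sums, hence $\mathrm{dist}^{\mathrm{s}}$ and $\mathrm{dist}_{\mathrm{GH}}$) depends on a $*$-invariant Lip-norm only through its restriction to the self-adjoint part, since the supremum in the Monge--Kantorovich distance may be taken over self-adjoint elements. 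Your alternative of rerunning the bridge argument directly with $\lVert\cdot\rVert_{\lambda,\rho}$ is sound: $H_\Lambda$ is invariant under both regular representations, so compression commutes with $\lambda,\rho$ and $\tau_\Lambda$ is $1$-contractive; the identity $\sigma_\Lambda^\phi(x)(h)=\phi(\rho_h(x))$ together with bi-invariance of $d$ gives $\mathrm{Lip}_d(\sigma_\Lambda^\phi(x))\leq\lVert x\rVert_\rho$, so $C_\Phi=C_\Psi=1$; then \autoref{prop:C1-Approximate-Order-Iso} (with $L_A'=\mathrm{Lip}_d$ by right invariance and $L_{A^{(\Lambda)}}^{\beta^\tau}\leq\lVert\cdot\rVert_{\lambda,\rho}$), \autoref{lem:Weak*-Density-Of-Liftable-States} and \autoref{prop:Criterion-qGH-comparison} yield $\mathrm{dist}^{\mathrm{s}}\to0$, hence $\mathrm{dist}_{\mathrm{GH}}\to0$. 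What your route buys is a self-contained quantitative argument that never needs the coincidence-on-self-adjoint-parts observation, since, as you say, the factor $2$ from the seminorm comparison only enters the error terms $\varepsilon_X,\varepsilon_Y$, which vanish along the net; what it costs is redoing the verifications, and one small step should be completed: to apply \autoref{prop:Criterion-qGH-comparison} you need $\lVert\cdot\rVert_{\lambda,\rho}$ to be a Lip-norm on $A^{(\Lambda)}$, and \autoref{lem:Total-boundedness} asks for finite radius in addition to total boundedness of $\overline{\mathrm{B}}_1^{\lVert\cdot\rVert,\lVert\cdot\rVert_{\lambda,\rho}}$; both follow from the inequality $L_{A^{(\Lambda)}}^{\alpha^\tau,\beta^\tau}\leq\lVert\cdot\rVert_{\lambda,\rho}$ together with \autoref{cor:Compression-is-a-CQMS}, so the gap is harmless but should be stated.
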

	
	\begin{proof}
		By \autoref{lem:Equivalence-induced-operator-seminorm} and \autoref{thm:Convergence-PW-truncations}, we have convergence of the net of compact quantum metric spaces $(P_\Lambda \mathrm{C}(G) P_\Lambda,\lVert\cdot\rVert_{\lambda,\rho})$ to the compact quantum metric space $(\mathrm{C}(G),\mathrm{Lip})$ in operator Gromov--Hausdorff distance,	which implies the claim, by \autoref{cor:Complete-GH-distance-bounds-all-others}.
	\end{proof}
	
	\begin{remark}
		Note that the two seminorms $L_{P\mathrm{C}(G)P}^{\alpha^\tau,\rho^\tau}$ and $\lVert\cdot\rVert_{\lambda,\rho}$ coincide on the self-adjoint subspace $(P\mathrm{C}(G)P)_\mathrm{sa}$.
		Indeed, this follows from the equality $\lVert x\rVert = \sup_{\phi \in \mathcal{S}(X)}|\phi(x)|$, for all self-adjoint elements of an operator system $X$, by the Kadison function representation.
	\end{remark}

\end{document}